\newtheorem{theorem}{Theorem}[section]
\newtheorem{lemma}[theorem]{Lemma}
\newtheorem{corollary}[theorem]{Corollary}
\newtheorem{proposition}[theorem]{Proposition}
\newtheorem{definition}[theorem]{Definition}
\theoremstyle{remark}
\newtheorem{remark}[theorem]{Remark}
\newtheorem*{ack*}{Acknowledgment}
\def\p{\vec{p}}
\def\F{{\mathcal F}}
\def\R{{\mathbb R}}
\def\C{{\mathbb C}}
\def\Q{{\mathcal T}}\def\B{{\mathcal B}}
\def\P{{\mathcal P}}
\def\M{{\mathcal M}}\def\G{{\bf Gr}}
\def\A{{\mathcal N}}
\def\dim{{\operatorname{dim}}}\def\Col{{\textbf{Col}}}\def\Reach{{\operatorname{Reach}}}
\def\MLR{{\textbf{MLR}}}
\def\supp{{\operatorname{supp}}}\def\det{{\operatorname{det}}}
\def\bas{\begin{align*}}
\def\eas{\end{align*}}
\def\bi{\begin{itemize}}
\def\ei{\end{itemize}}
\newenvironment{proof}{\noindent {\bf Proof} }{\endprf\par}
\def \endprf{\hfill  {\vrule height6pt width6pt depth0pt}\medskip}
\def\emph#1{{\it #1}}
\begin{document}
\author{Jean Bourgain}
\address{School of Mathematics, Institute for Advanced Study, Princeton, NJ 08540}
\email{bourgain@@math.ias.edu}
\author{Ciprian Demeter}
\address{Department of Mathematics, Indiana University, 831 East 3rd St., Bloomington IN 47405}
\email{demeterc@@indiana.edu}

\title[Optimal mean value estimates for the quadratic Weyl sums]{Optimal mean value estimates for the quadratic Weyl sums in two dimensions}

\begin{abstract}
We use decoupling theory to prove a sharp (up to $N^\epsilon$ losses) estimate for Vinogradov's mean value theorem in two dimensions.
\end{abstract}
\maketitle
\section{Introduction}
\medskip

Let $\M$ denote the manifold
$$\M=\{(s,t,s^2,t^2,st):\;0\le s,t\le 1\}.$$
For each square $S\subset [0,1]^2$ and each $g:S\to\C$ define the extension operator
$$E_Sg(x_1,\ldots,x_5)=\int_Sg(s,t)e(x_1s+x_2t+x_3s^2+x_4t^2+x_5st)dsdt.$$
Here and throughout the rest of the paper we will  write $$e(z)=e^{2\pi i z},\;z\in\R.$$
For a positive weight $v:\R^5\to[0,\infty)$ we define
$$\|f\|_{L^p(v)}=(\int_{\R^5}|f(x)|^pv(x)dx)^{1/p}.$$
Also, for each ball $B$ in $\R^5$ centered at $c(B)$ and with radius $R$, $w_{B}$ will denote the weight
$$w_{B}(x)= \frac{1}{(1+\frac{|x-c(B)|}{R})^{100}}.$$
Throughout the paper, $B_R$ will denote an arbitrary ball in $\R^5$ with radius $R$. Our main result is the following decoupling theorem for $\M$.
\begin{theorem}
\label{tfek6}
For each $p\ge 2$,  $g:[0,1]^2\to\C$ and each ball $B_N\subset\R^5$ with radius $N\ge 1$ we have
$$
\|E_{[0,1]^2}g\|_{L^p(w_{B_{N}})}\le $$
\begin{equation}
\label{fe3}
\le D(N,p)(\sum_{\Delta\subset [0,1]^2\atop{l(\Delta)=N^{-1/2}}}\|E_\Delta g\|_{L^p(w_{B_{N}})}^p)^{1/p},
\end{equation}
 where the sum  is over a finitely overlapping cover of $[0,1]^2$ with squares $\Delta$ of side length $l(\Delta)=N^{-1/2}$, and for each $\epsilon>0$ we have
$$D(N,p)\lesssim_{\epsilon,p} N^{\frac12-\frac1p+\epsilon},\,2\le p\le 8,$$
$$D(N,p)\lesssim_{\epsilon,p} N^{1-\frac5{p}+\epsilon},\,p\ge 8.$$
\end{theorem}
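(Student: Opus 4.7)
The plan is to reduce the stated bounds to the single critical estimate $D(N,8)\lesssim_\epsilon N^{3/8+\epsilon}$, and then to prove this by the Bourgain--Demeter framework. At the endpoints one has the two trivial bounds $D(N,2)\lesssim 1$ (the $E_\Delta g$ are $L^2(w_{B_N})$-almost orthogonal, since the kernel by which $w_{B_N}$ convolves has scale $1/N$, finer than the cap separation $N^{-1/2}$) and $D(N,\infty)\lesssim N$ (triangle inequality over the $\sim N$ caps). Since decoupling constants interpolate by Riesz--Thorin (the decoupling inequality is the norm of a linear operator on an $l^p(L^p)$ space of cap-localized functions), interpolating the critical estimate with $D(N,2)\lesssim 1$ yields $D(N,p)\lesssim_\epsilon N^{1/2-1/p+\epsilon}$ for $2\le p\le 8$, and interpolating with $D(N,\infty)\lesssim N$ yields $D(N,p)\lesssim_\epsilon N^{1-5/p+\epsilon}$ for $p\ge 8$. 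The threshold value $p=8$ emerges exactly at the crossover $N^{1/2-1/p}=N^{1-5/p}$, reflecting the volumetric balance of the wave-packet plates $N^{1/2}\times N^{1/2}\times N\times N\times N$ (of which there are $N$, tiling $B_N$).

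The critical $D(N,8)\lesssim_\epsilon N^{3/8+\epsilon}$ is equivalent, via a single H\"older step with exponents $4$ and $4/3$ on the cap index (costing precisely $N^{1/2-1/8}=N^{3/8}$), to the $l^2$-decoupling inequality
\begin{equation*}
\|E_{[0,1]^2}g\|_{L^8(w_{B_N})}\lesssim_\epsilon N^\epsilon\Bigl(\sum_{l(\Delta)=N^{-1/2}}\|E_\Delta g\|_{L^8(w_{B_N})}^2\Bigr)^{1/2}.
\end{equation*}
This $l^2$-decoupling at the critical exponent will be the heart of the proof, and I plan to establish it by induction on the scale $N$ following the Bourgain--Demeter template. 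Each inductive step proceeds via a Bourgain--Guth broad/narrow decomposition of $B_N$: in the \emph{broad} region, where several caps in transverse position jointly contribute, the required bound comes from a multilinear $L^8$-restriction estimate for $\M$, which itself comes from the Bennett--Carbery--Tao multilinear Kakeya theorem in $\R^5$ applied to the wave-packet plates; in the \emph{narrow} region, where all significant caps concentrate in a thin sub-strip of $[0,1]^2$, I exploit the affine self-similarity of $\M$ (inherited from its being cut out by quadratic equations) to rescale the strip to a unit square and invoke the inductive hypothesis at a smaller scale. Known decoupling for the parabola in $\R^2$ and the paraboloid in $\R^3$ serve as base cases for the narrow analysis.

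The main obstacle is the multilinear $L^8$-estimate. Because $\M$ has codimension $3$ rather than being a hypersurface, one cannot apply the standard hypersurface-style multilinear restriction; instead one must identify the right quantitative transversality condition for a collection of $2$-dimensional tangent planes in $\R^5$. The key structural input is that $\M$ has maximal-rank second fundamental form: the three vectors $\partial_s^2\phi=(0,0,2,0,0)$, $\partial_t^2\phi=(0,0,0,2,0)$, and $\partial_s\partial_t\phi=(0,0,0,0,1)$ are linearly independent modulo the tangent plane at every point, so together with the $2$-dimensional tangent bundle they span the full ambient $\R^5$ with positive angle. A secondary but still delicate issue arises in the narrow case, where the bilinear coupling $x_5st$ in the phase obstructs any clean tensor-product decomposition and forces a case split between strips aligned with the $s$-axis, strips aligned with the $t$-axis, and truly ``diagonal'' strips, each with its own adapted rescaling.
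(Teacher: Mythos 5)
Your global architecture (reduce everything to $p=8$ and interpolate with the trivial $p=2$ and $p=\infty$ bounds; prove the critical case by a Bourgain--Guth decomposition feeding a multilinear restriction estimate and a rescaling induction) is the same as the paper's, but the two places where you wave your hands are exactly the two places where the paper has to do real work, and your proposed fixes would not go through. First, the narrow case. You propose to handle the region where the significant caps ``concentrate in a thin sub-strip'' by rescaling the strip to a unit square and inducting. For this codimension-$3$ manifold a $\sigma\times 1$ strip does \emph{not} rescale onto $\M$: the affine symmetries of $\M$ are the parabolic rescalings $(s,t)\mapsto(\delta s,\delta t)$ adapted to \emph{squares}, and an anisotropic rescaling of a strip destroys the relative scaling of $s^2$, $t^2$ and $st$. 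This is precisely the ``lower-dimensional contribution'' problem, and it is genuinely hard in the $\ell^2$ formulation you adopt (your claimed equivalence between $D(N,8)\lesssim N^{3/8+\epsilon}$ and $\ell^2 L^8$ decoupling is also only one direction: H\"older gives $\ell^2\Rightarrow\ell^8$ with loss $N^{3/8}$, but not conversely, so you would be proving a strictly stronger statement whose strip contributions you have not controlled). The paper sidesteps this entirely by using a different dichotomy: either at least $\Lambda_\epsilon K^{1+\epsilon}$ of the $K^{-1}$-caps are significant, in which case the combinatorial Theorem \ref{tfek3} produces ten $\nu_K$-transverse caps among them, or else there are at most $K^{1+\epsilon}$ significant caps and the ``trivial'' $\ell^p$ decoupling of Lemma \ref{fl1} costs only $K^{(1+\epsilon)(1-2/p)}$, which is affordable precisely because the target is $\ell^p$ (not $\ell^2$) decoupling with constant $K^{1-2/p}$ at scale $K$ (see Proposition \ref{fp2}). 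Nothing in your proposal plays the role of Theorem \ref{tfek3}, which is the paper's key new combinatorial input.

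Second, the transversality underlying the multilinear estimate. Your criterion --- that $\partial_s^2\phi,\partial_t^2\phi,\partial_s\partial_t\phi$ span the normal space at each point --- is a pointwise nondegeneracy statement about $\M$; it says nothing about which \emph{configurations} of caps are jointly transverse, which is what a multilinear restriction/Kakeya inequality requires. The correct condition here (Definition \ref{dfek1}) is the Brascamp--Lieb condition \eqref{fek3} on the ten tangent planes, which for this manifold unwinds into a three-point non-collinearity condition and a five-point condition of not lying on a pair of quadric curves $Q_{u,v}=0$, $Q_{u,w}=0$; this is why the paper works with $10$-linearity and why the Kakeya-type input is a multilinear inequality for neighborhoods of $3$-planes in $\R^5$ derived from Bennett--Carbery--Christ--Tao plus Guth's iteration, not the hypersurface-type Bennett--Carbery--Tao theorem you cite. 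Without identifying this configuration-level transversality and showing that it is generically achievable (which is again Theorem \ref{tfek3}), the broad part of your argument cannot be closed.
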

In the following, we may and will implicitly assume that $N=2^m$ for some positive integer $m$, and that the squares $\Delta$ are dyadic and partition $[0,1]^2$.

A standard computation with $g=1_{[0,1]^2}$ reveals that  Theorem \ref{tfek6} is essentially sharp, more precisely
\begin{equation}
\label{fe30}
D(N,p)\gtrsim N^{\frac12-\frac1p}\text{ for }2\le p\le 8,\;\;D(N,p)\gtrsim N^{1-\frac5{p}}\text{ for }8\le p\le \infty.
\end{equation}
For future use, we record the following trivial upper bound that follows from the Cauchy--Schwartz inequality
\begin{equation}
\label{fe3008}
D(N,p)\lesssim N^{1-\frac1p},\text{ for } p\ge 1.
\end{equation}
We will prove that $D(N,8)\lesssim_\epsilon N^{\frac3{8}+\epsilon}$. The estimates for other $p$  will follow by interpolation with the trivial $p=2$ and $p=\infty$ results.
\bigskip

Theorem \ref{tfek6} is part of a program that has been initiated by the authors in \cite{BD3},  where the sharp decoupling theory has been completed for hyper-surfaces with definite second fundamental form, and also for the cone. The decoupling theory has since proved to be a very successful tool for a wide variety of problems in number theory that involve exponential sums. See  \cite{Bo}, \cite{Bo6}, \cite{BW}, \cite{BD4}, \cite{BD5}. This paper is no exception from the rule. Theorem \ref{tfek6} is in part motivated by its application to solving the Vinogradov-type mean value conjecture for quadratic systems in two dimensions, as explained in the next section. Perhaps surprisingly, our Fourier analytic approach eliminates any appeal to number theory. The methodology we develop here is in principle applicable to address the similar question in all dimensions, under the quadraticity assumption. We have decided not to pursue this general case here.

\bigskip

The proof of Theorem \ref{tfek6} will follow a strategy similar to the one from \cite{BD5}. At the heart of the argument lies the interplay between linear and multilinear decoupling, facilitated by the Bourgain--Guth induction on scales. Running this machinery produces two types of contributions, a transverse one and a non-transverse one. To control the transverse term we need to prove a $10-$linear restriction theorem for a specific two dimensional manifold in $\R^5$. Defining transversality in a manner that makes it easy to check and achieve in our application, turns out to be a rather delicate manner. In the attempt to simplify the discussion, we often run non-quantitative arguments that  rely instead on compactness. For example, in line with our previous related papers, we never care about the quantitative dependence on transversality of the bound in the multilinear restriction inequality. These considerations occupy sections \ref{Bra}, \ref{Trans} and \ref{se:multi}.

The non-transverse contribution is dominated using a trivial form of decoupling. But to make this efficient, we have to make sure that there are not too many transverse terms contributing to the sum. This is achieved in Section \ref{Comb} via some geometric combinatorics that we find of independent interest.

\begin{ack*}
The second author  would like to thank Mariusz Mirek and Lillian Pierce for drawing his attention to the Vinogradov mean value theorem in higher dimensions.
\end{ack*}

\bigskip

\section{Number theoretical consequences}
For each integer $s\ge 1$, denote by  $J_{s,2,2}(N)$ the number of integral solutions for the following system of simultaneous Diophantine equations
$$X_1+\ldots+X_s=X_{s+1}+\ldots+X_{2s},$$
$$Y_1+\ldots+Y_s=Y_{s+1}+\ldots+Y_{2s},$$
$$X_1^2+\ldots+X_s^2=X_{s+1}^2+\ldots+X_{2s}^2,$$
$$Y_1^2+\ldots+Y_s^2=Y_{s+1}^2+\ldots+Y_{2s}^2,$$
$$X_1Y_1+\ldots+X_sY_s=X_{s+1}Y_{s+1}+\ldots+X_{2s}Y_{2s},$$
with $1\le X_i,Y_j\le N$.

It was  conjectured in \cite{PPW} (see the top of page 1965, with $k=d=2$) that for $s\ge 1$
$$J_{s,2,2}(N)\lesssim_{\epsilon,s} N^{\epsilon}(N^{2s}+N^{4s-8}).$$
This is the quadratic case of the two dimensional Vinogradov mean value theorem.

Theorem 1.1 in \cite{PPW} established this inequality for $s\ge 15$. Here we will prove that this holds  in the whole range $s\ge 1$. Our approach will in fact prove a much more general result, see Corollary \ref{cfek4} below. We start with the following rather immediate consequence of our Theorem \ref{tfek6}.

\begin{theorem}
For each $1\le i\le N$, let $t_i,s_i$ be two points in $(\frac{i-1}{N},\frac{i}{N}]$. Then for each $R\gtrsim N^{2}\ge 1$, each ball $B_R$ with radius $R$ in $\R^5$, each $a_{i,j}\in\C$ and each $p\ge 2$  we have
$$(\frac1{|B_R|}\int_{B_R}|\sum_{i=1}^N\sum_{j=1}^Na_{i,j}e(x_1s_i+x_2t_j+x_3s_i^2+x_4t_j^2+x_5s_it_j)|^{p}dx_1\ldots dx_5)^{\frac1p}\lesssim$$
\begin{equation}
\label{fek19}
\lesssim D(N^2,p)\|a_{i,j}\|_{l^p(\{1,\ldots,N\}^2)},
\end{equation}
and the implicit constant does not depend on $N$, $R$ and $a_{i,j}$.
\end{theorem}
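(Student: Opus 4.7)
The plan is to deduce \eqref{fek19} from Theorem \ref{tfek6} applied with the parameter $N$ there replaced by $N^2$. The key matching is that the decoupling squares at scale $N^2$ have side length $(N^2)^{-1/2}=N^{-1}$, and thus coincide with the dyadic cells $\Delta_{ij}:=(\frac{i-1}{N},\frac{i}{N}]\times(\frac{j-1}{N},\frac{j}{N}]$ containing the prescribed nodes $(s_i,t_j)$. To encode $(a_{i,j})$ as a function on $[0,1]^2$, I would fix a Schwartz bump $\rho$ on $\R^2$ with $\hat\rho\equiv 1$ on a neighbourhood of the origin, pick a tiny scale $\epsilon=N^{-C}$ with $C$ much larger than any exponent entering the final bound, set $\rho_\epsilon(u,v):=\epsilon^{-2}\rho(u/\epsilon,v/\epsilon)$, and define
$$g(s,t):=\sum_{i,j}a_{i,j}\,\rho_\epsilon(s-s_i,\,t-t_j).$$
Since $\epsilon\ll N^{-1}$, the summands are essentially concentrated in disjoint cells $\Delta_{ij}$, so $E_{\Delta_{ij}}g$ isolates the $(i,j)$-th contribution to $E_{[0,1]^2}g$.

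Next I would identify $E_{[0,1]^2}g(x)$ with the exponential sum $f(x):=\sum_{i,j}a_{i,j}e(\phi_{ij}(x))$ appearing on the left of \eqref{fek19}, where $\phi_{ij}(x):=x_1s_i+x_2t_j+x_3s_i^2+x_4t_j^2+x_5s_it_j$. Taylor-expanding the manifold phase around $(s_i,t_j)$ and writing $(u,v):=(s-s_i,t-t_j)$ gives
$$E_{\Delta_{ij}}g(x)=a_{i,j}\,e(\phi_{ij}(x))\int\rho_\epsilon(u,v)\,e\bigl(\xi_{i,j}(x)\cdot(u,v)+O(|x|(u^2+v^2))\bigr)\,du\,dv,$$
with $|\xi_{i,j}(x)|=O(|x|)$. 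For $|x|\lesssim N^2$ the quadratic remainder is of size $O(N^{2-2C})$ and $|\epsilon\xi_{i,j}(x)|=O(N^{2-C})$, so the inner integral equals $\hat\rho(\epsilon\xi_{i,j}(x))=1$ up to negligible error; hence $E_{\Delta_{ij}}g(x)=a_{i,j}e(\phi_{ij}(x))$ on $B_{N^2}$. The trivial bound $|E_{\Delta_{ij}}g(x)|\le|a_{i,j}|\,\|\rho\|_{L^1}$ holds on all of $\R^5$, which combined with the rapid decay of $w_{B_{N^2}}$ gives $\|E_{\Delta_{ij}}g\|_{L^p(w_{B_{N^2}})}\lesssim|a_{i,j}|\,|B_{N^2}|^{1/p}$.

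Applying Theorem \ref{tfek6} at scale $N^2$ to the ball $B_{N^2}$ and combining with the two estimates above then yields
$$\Bigl(\frac{1}{|B_{N^2}|}\int_{B_{N^2}}|f|^p\Bigr)^{1/p}\lesssim D(N^2,p)\,\|a_{i,j}\|_{l^p(\{1,\ldots,N\}^2)}.$$
Because $\phi_{ij}$ is linear in $x$, we have $f(c+y)=\sum_{i,j}a_{i,j}e(\phi_{ij}(c))e(\phi_{ij}(y))$, which is a new exponential sum of the same type with coefficients of identical moduli $|a_{i,j}|$. Thus the bound extends to every translated ball $B_{N^2}(c)$, and for a general $R\gtrsim N^2$ I would tile $B_R$ by $\sim(R/N^2)^5$ disjoint balls of radius $N^2$ and sum the $p$-th powers; the ratio of volumes cancels and produces \eqref{fek19} with a constant independent of $R$, $N$, and $(a_{i,j})$.

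The main technical obstacle is the bookkeeping needed to pass from the Schwartz weight $w_{B_{N^2}}$ in Theorem \ref{tfek6} to the hard cutoff $1_{B_R}$ appearing here, while absorbing the Schwartz tails generated by the quadratic Taylor remainder and by $\rho_\epsilon$ outside its nominal cell $\Delta_{ij}$. Choosing $C$ extravagantly large (for instance $C=100$) makes all such errors smaller than any fixed negative power of $N$, so they are absorbed into the main term without affecting the stated constant; apart from this routine care, the argument is a formal consequence of Theorem \ref{tfek6}.
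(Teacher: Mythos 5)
Your argument is correct and follows essentially the same route as the paper: apply Theorem \ref{tfek6} at scale $N^2$ to a function concentrating at the nodes $(s_i,t_j)$ so that $E_{[0,1]^2}g$ becomes the exponential sum and each $E_{\Delta}g$ isolates one coefficient, then pass from $B_{N^2}$ to $B_R$ by a finitely overlapping cover. The paper merely replaces your Schwartz bumps at scale $N^{-C}$ (with its attendant error bookkeeping) by normalized indicators $\tau^{-2}1_{B_{i,j,\tau}}$ and a clean limit $\tau\to 0$, and performs the covering step at the level of the weights ($\sum w_{B_{N^2}}\lesssim w_{B_R}$) rather than by summing $p$-th powers of the final discrete inequality; these are cosmetic differences.
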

\begin{proof}
Given $B_R$, let $\B$ be a finitely overlapping cover of $B_R$ with balls $B_{N^2}$. An elementary computation shows that
\begin{equation}
\label{fek20}
\sum_{B_{N^2}\in\B}w_{B_{N^2}}\lesssim w_{B_R},
\end{equation}
with the implicit constant independent of $N,R$. Invoking  Theorem \ref{tfek6} for each $B_{N^2}\in\B$, then summing up and using
\eqref{fek20} we obtain
$$
\|E_{[0,1]^2}g\|_{L^p(B_R)}\lesssim $$
$$
\lesssim D(N^2,p)(\sum_{\Delta\subset [0,1]^2\atop{l(\Delta)=N^{-1}}}\|E_\Delta g\|_{L^p(w_{B_{R}})}^p)^{1/p}.
$$
Use this inequality with $$g=\frac1{\tau^2}\sum_{i=1}^N\sum_{j=1}^Na_{i,j}1_{B_{i,j,\tau}},$$
where $B_{i,j,\tau}$ is the ball in $\R^2$ centered at $(s_i,t_j)$ with radius $\tau.$ Then let $\tau$ go to $0$.

\end{proof}
For each $1\le i\le N$ consider some real  numbers $i-1< \tilde{X}_i,\tilde{Y}_i\le i$. We do not insist that $\tilde{X}_i,\tilde{Y}_i$ be integers. Let $S_X=\{\tilde{X}_1,\ldots,\tilde{X}_N\}$ and $S_Y=\{\tilde{Y}_1,\ldots,\tilde{Y}_N\}$. For each $s\ge 1$, denote by  $\tilde{J}_{s,2,2}(S_X,S_Y)$ the number of  solutions  of the following system of inequalities
$$|X_1+\ldots+X_s-(X_{s+1}+\ldots+X_{2s})|\le \frac1{N},$$
$$|Y_1+\ldots+Y_s-(Y_{s+1}+\ldots+Y_{2s})|\le \frac1{N},$$
$$|X_1^2+\ldots+X_s^2-(X_{s+1}^2+\ldots+X_{2s}^2)|\le 1,$$
$$|Y_1^2+\ldots+Y_s^2-(Y_{s+1}^2+\ldots+Y_{2s}^2)|\le 1,$$
$$|X_1Y_1+\ldots+X_sY_s-(X_{s+1}Y_{s+1}+\ldots+X_{2s}Y_{2s})|\le 1,$$
with $X_i\in S_X, Y_j\in S_Y$.
\begin{corollary}
\label{cfek4}
For each integer $s\ge 1$ and each $S_X,S_Y$ as above we have that
$$\tilde{J}_{s,2,2}(S_X,S_Y)\lesssim_{\epsilon,s}N^{\epsilon}(N^{2s}+N^{4s-8}),$$
where the implicit constant does not depend on $S_X,S_Y$.
\end{corollary}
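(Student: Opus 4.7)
The plan is to interpret $\tilde J_{s,2,2}(S_X,S_Y)$ via Fourier analysis as controlled by an $L^{2s}$-moment of the exponential sum $f(x)=\sum_{i,j=1}^N e(x_1\tilde X_i+x_2\tilde Y_j+x_3\tilde X_i^2+x_4\tilde Y_j^2+x_5\tilde X_i\tilde Y_j)$ against a smooth, non-negative mollifier, and then to invoke the estimate (\ref{fek19}) after an anisotropic rescaling. Expanding $|f|^{2s}$ gives the identity $\int|f|^{2s}\eta\,dx=\sum_{\mathbf i,\mathbf j}\hat\eta(-\Delta(\mathbf i,\mathbf j))$, where $\Delta$ is the vector of alternating sums on the left-hand sides of the system defining $\tilde J$. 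I would choose $\hat\eta(\xi)=\prod_{k=1}^5\phi(\xi_k/\tau_k)$ with $\tau=(N^{-1},N^{-1},1,1,1)$ and a fixed non-negative even Schwartz $\phi$ satisfying $\check\phi\ge 0$ and $\phi\ge 1$ on $[-1,1]$ (a scaled Gaussian works). Then $\hat\eta\ge 0$ everywhere and $\hat\eta(-\Delta)\ge 1$ on the admissible box of $\Delta$'s, so $\tilde J\le\int|f|^{2s}\eta\,dx$. The corresponding $\eta$ has $\|\eta\|_\infty\lesssim N^{-2}$ and is essentially concentrated on the anisotropic $x$-box $[-N,N]^2\times[-1,1]^3$ with rapid decay outside.

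The next step is a rescaling that puts both $f$ and this box into the shape needed by (\ref{fek19}). Setting $u_i=\tilde X_i/N$ and $v_j=\tilde Y_j/N$ places $(u_i,v_j)$ inside the grid cell $((i-1)/N,i/N]\times((j-1)/N,j/N]\subset[0,1]^2$, and the change of variables $y=(Nx_1,Nx_2,N^2x_3,N^2x_4,N^2x_5)$, with Jacobian $|dx/dy|=N^{-8}$, turns $f(x)$ into $g(y)=\sum_{i,j}e(y_1u_i+y_2v_j+y_3u_i^2+y_4v_j^2+y_5u_iv_j)$. The $x$-support of $\eta$ becomes the $y$-cube $[-N^2,N^2]^5$, sitting inside a $y$-ball $B_{N^2}$ of radius $\sqrt 5\,N^2$. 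Combining the Jacobian, $\|\eta\|_\infty\lesssim N^{-2}$, and the rapid decay of the Schwartz tails yields $\tilde J\lesssim N^{-10}\int_{B_{N^2}}|g|^{2s}\,dy$.

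Finally, I would apply (\ref{fek19}) to $g$ with $a_{i,j}=1$, $R=N^2$, and $p=2s$, obtaining $\int_{B_{N^2}}|g|^{2s}\,dy\lesssim |B_{N^2}|\,D(N^2,2s)^{2s}\,N^2\lesssim N^{12}D(N^2,2s)^{2s}$; hence $\tilde J\lesssim N^{2}D(N^2,2s)^{2s}$. For $2\le 2s\le 8$, Theorem \ref{tfek6} gives $D(N^2,2s)\lesssim_\epsilon N^{1-1/s+\epsilon}$, leading to $\tilde J\lesssim N^{2s+\epsilon}$; for $2s\ge 8$ it gives $D(N^2,2s)\lesssim_\epsilon N^{2-5/s+\epsilon}$, leading to $\tilde J\lesssim N^{4s-8+\epsilon}$. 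Combining the two ranges yields $\tilde J\lesssim_{\epsilon,s}N^\epsilon(N^{2s}+N^{4s-8})$. I expect the most delicate issue to be organizing the anisotropy correctly: the tolerances are $N^{-1}$ in the linear equations but $1$ in the quadratic and bilinear ones, so a spherical mollifier cannot be used directly, and the box-shaped $\eta$ must be paired with precisely the non-uniform rescaling $x\mapsto y$ so that the distorted support lands inside the isotropic ball $B_{N^2}$ on which (\ref{fek19}) is sharp.
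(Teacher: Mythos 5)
Your proposal is correct and follows essentially the same route as the paper: a positive Schwartz mollifier with positive Fourier transform minorized by $1$ on the admissible (anisotropic) box of alternating sums, expansion of $|f|^{2s}$ to bound $\tilde{J}_{s,2,2}$ by a weighted $L^{2s}$-moment, and the anisotropic rescaling $x\mapsto(Nx_1,Nx_2,N^2x_3,N^2x_4,N^2x_5)$ to land the estimate on a ball $B_{N^2}$ where \eqref{fek19} applies with $a_{i,j}=1$. The only difference is the order of operations (the paper rescales the frequencies first and identifies the counting lower bound afterwards), and your exponent bookkeeping $\tilde{J}\lesssim N^2 D(N^2,2s)^{2s}$ matches the paper's conclusion.
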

\begin{proof}Let $\phi:\R^5\to [0,\infty)$ be a positive Schwartz function with  positive Fourier transform  satisfying $\widehat{\phi}(\xi)\ge1$ for $|\xi|\lesssim 1$.
Define $\phi_{N}(x)=\phi(\frac{x}N)$. Using the Schwartz decay, \eqref{fek19} with $a_{i,j}=1$ implies that for each $s\ge 1$
$$(\frac1{|B_{N^2}|}\int_{\R^5}\phi_{N^2}(x_1,\ldots,x_5)|\sum_{i=1}^N\sum_{j=1}^Ne(x_1s_i+x_2t_j+x_3s_i^2+x_4t_j^2+x_5s_it_j)|^{2s}dx_1\ldots dx_5)^{\frac1{2s}}\lesssim$$
\begin{equation}
\label{fek21}
\lesssim D(N^2,2s)N^{\frac1s},
\end{equation}
whenever $s_i,t_i\in [\frac{i-1}{N},\frac{i}{N})$. Apply \eqref{fek21} to $s_i=\frac{\tilde{X}_i}{N}$ and $t_j=\frac{\tilde{Y}_j}{N}$.
Let now $$\phi_{N, 1}(x_1,\ldots,x_5)=\phi(\frac{x_1}N,\frac{x_2}N, {x_3}, {x_4},{x_5}).$$

After making a change of variables and expanding the product,  the term
$$\int_{\R^5}\phi_{N^2}(x_1,\ldots,x_5)|\sum_{i=1}^N\sum_{j=1}^Ne(x_1s_i+x_2t_j+x_3s_i^2+x_4t_j^2+x_5s_it_j)|^{2s}dx_1\ldots dx_5$$
can be written as  the sum over all $X_i\in S_X,Y_j\in S_Y$ of
$$N^8\int_{\R^5}\phi_{N, 1}(x_1,\ldots,x_5)e(x_1Z_1+x_2Z_2+x_3Z_3+x_4Z_4+x_5Z_5)dx_1\ldots dx_5,$$
where
$$Z_1=X_1+\ldots+X_s-(X_{s+1}+\ldots+X_{2s}),$$
$$Z_2=Y_1+\ldots+Y_s-(Y_{s+1}+\ldots+Y_{2s}),$$
$$Z_3=X_1^2+\ldots+X_s^2-(X_{s+1}^2+\ldots+X_{2s}^2),$$
$$Z_4=Y_1^2+\ldots+Y_s^2-(Y_{s+1}^2+\ldots+Y_{2s}^2),$$
$$Z_5=X_1Y_1+\ldots+X_sY_s-(X_{s+1}Y_{s+1}+\ldots+X_{2s}Y_{2s}).$$
Each such term is equal to
$$N^{10}\widehat{\phi}(NZ_1,NZ_2,Z_3,Z_4, Z_5).$$
Recall that this is always positive, and in fact greater than $N^{10}$ at least $\tilde{J}_{s,2,2}(S_X,S_Y)$ times. Going back to \eqref{fek21}, it follows by invoking Theorem \ref{tfek6} that
$$\tilde{J}_{s,2,2}(S_X,S_Y)\lesssim D(N^2,2s)N^{\frac1s}\lesssim_{\epsilon,s} N^{\epsilon}(N^{2s}+N^{4s-8}).$$

\end{proof}
\bigskip

\section{A Brascamp--Lieb inequality}
\label{Bra}

For $1\le j\le m$, let $V_j$ be $n_j-$dimensional affine subspaces of $\R^n$ and let $l_j:\R^n\to V_j$ be surjective affine transformations.  Define the multilinear functional
$$\Lambda(f_1,\ldots,f_m)=\int_{\R^n}\prod_{j=1}^mf_j(l_j(x))dx$$
for $f_j:V_j\to\C$. Each $V_j$ will be equipped with the $n_j-$ dimensional Lebesgue measure. We recall the following theorem from \cite{BCCT}.
\begin{theorem}
\label{BCCT}
Given a vector $\p=(p_1,\ldots,p_m)$ with $p_j\ge 1$, we have that
\begin{equation}
\label{fek17}
\sup_{f_j\in L^{p_j}({V_j})}\frac{|\Lambda(f_1,\ldots,f_m)|}{\prod_{j=1}^m\|f_j\|_{L^{p_j}}}<\infty
\end{equation}
if and only if
\begin{equation}
\label{fek1}
n=\sum_{j=1}^m\frac{n_j}{p_j}
\end{equation}
and the following transversality condition is satisfied
\begin{equation}
\label{fek2}
\dim(V)\le \sum_{j=1}^m\frac{\dim(l_j(V))}{p_j}, \text{ for every subspace }V\subset \R^n.
\end{equation}
\end{theorem}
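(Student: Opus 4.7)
\emph{Proof proposal.} The necessity of both conditions is obtained by testing \eqref{fek17} against carefully chosen inputs. To recover the scaling identity \eqref{fek1}, I take $f_j = \mathbf{1}_{B_j^R}$, the indicator of a ball of radius $R$ in $V_j$, and track how the two sides scale as $R$ varies: the set $\{x \in \R^n : l_j(x) \in B_j^R \text{ for all } j\}$ has measure comparable to $R^n$, while $\|f_j\|_{L^{p_j}} \sim R^{n_j/p_j}$; finiteness of the ratio as both $R \to 0$ and $R \to \infty$ forces \eqref{fek1}. For the transversality \eqref{fek2}, I fix a subspace $V \subset \R^n$ and use as $f_j$ the indicator of a $\delta$-thin tube around $l_j(V)$ intersected with a fixed large ball: then $\|f_j\|_{L^{p_j}} \sim \delta^{(n_j - \dim l_j(V))/p_j}$, while the LHS is at least a constant multiple of $\delta^{n - \dim V}$. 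Letting $\delta \to 0$ and invoking \eqref{fek1} yields \eqref{fek2}.

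For sufficiency, my plan is to reduce to Gaussian inputs. Following Lieb's observation that centred isotropic Gaussians saturate the extremal ratio for Brascamp--Lieb functionals, it suffices to establish \eqref{fek17} for $f_j(y) = e^{-\pi \langle A_j y, y\rangle}$ with $A_j > 0$. Writing $L_j$ for the linear part of $l_j$ (translations can be absorbed), an elementary Gaussian computation gives
\[
\Lambda(f_1,\ldots,f_m) = \det\Bigl(\sum_{j=1}^m L_j^* A_j L_j\Bigr)^{-1/2},
\]
while $\prod_j \|f_j\|_{L^{p_j}}$ is an explicit monomial in the $\det(A_j)$. The Gaussian form of the Brascamp--Lieb inequality thus reduces to the algebraic assertion
\[
\det\Bigl(\sum_{j=1}^m L_j^* A_j L_j\Bigr) \ge c\prod_{j=1}^m\det(A_j)^{1/p_j},
\]
uniform in $A_j > 0$, with \eqref{fek1} guaranteeing matching homogeneity in the $A_j$ of the two sides.

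The main obstacle is to deduce this determinantal inequality from the transversality hypothesis \eqref{fek2} and to justify the reduction to Gaussians rigorously. For both I would invoke the heat-flow monotonicity argument of Bennett--Carbery--Christ--Tao: replace each $f_j$ by its heat evolution $f_j^{(t)}$ on $V_j$; show via a pointwise Cauchy--Schwarz-type identity that the normalized ratio $\Lambda(f_1^{(t)},\ldots,f_m^{(t)}) / \prod_j\|f_j^{(t)}\|_{L^{p_j}}$ is monotone in $t$; then pass to $t \to \infty$, where up to rescaling the inputs become Gaussian. Condition \eqref{fek2} enters exactly as the non-degeneracy criterion guaranteeing that $\sum_j L_j^* A_j L_j$ stays positive definite and that the determinantal inequality cannot collapse along any subspace of $\R^n$. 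Proving this matrix inequality, and controlling the subspaces along which near-degeneracy could in principle occur, is the technical heart of the argument.
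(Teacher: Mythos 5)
The paper does not prove this statement at all: it is quoted verbatim from \cite{BCCT} (``We recall the following theorem from \cite{BCCT}''), so there is no internal proof to compare yours against. What you have written is a roadmap of the actual Bennett--Carbery--Christ--Tao argument. The necessity half of your proposal is essentially complete and correct: testing \eqref{fek17} on indicators of balls of radius $R$ and letting $R\to 0$ and $R\to\infty$ forces the scaling identity \eqref{fek1}, and testing on $\delta$-neighborhoods of $l_j(V)$ (truncated to a fixed ball) gives $n-\dim V\ge \sum_j (n_j-\dim l_j(V))/p_j$, which combined with \eqref{fek1} is exactly \eqref{fek2}. These are the standard computations and they go through as you describe.

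The sufficiency half, however, is a plan rather than a proof, and the two steps you defer are precisely the substance of the theorem. First, the reduction to Gaussians (Lieb's theorem) is not an ``observation''; in \cite{BCCT} it is either imported as a deep external result or re-derived via the heat-flow monotonicity you mention, and that monotonicity identity itself only closes directly in the non-degenerate (``simple'') case --- the general case requires factoring the datum through critical subspaces. Second, and more importantly, the implication from \eqref{fek2} to the uniform determinantal inequality $\det\bigl(\sum_j L_j^*A_jL_j\bigr)\ge c\prod_j\det(A_j)^{1/p_j}$ is the technical core of \cite{BCCT}: it is proved by an induction on dimension in which one locates a critical subspace $V$ (one where \eqref{fek2} holds with equality), splits $\R^n$ as $V\oplus V^\perp$, and verifies that the restricted and quotient data again satisfy \eqref{fek1}--\eqref{fek2}. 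Your proposal acknowledges this is ``the technical heart'' but offers no argument for it, so as written the forward implication is not established. Since the paper itself treats the theorem as a black box, the honest fix is either to cite \cite{BCCT} for sufficiency as the authors do, or to supply the critical-subspace induction in full.
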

When all $p_j$ are equal to some $p$, an equivalent way to write \eqref{fek17} is
\begin{equation}
\label{fek18}
\sup_{g_j\in L^{2}({V_j})}\frac{\|(\prod_{j=1}^m g_j\circ l_j)^{\frac1m}\|_{L^{q}}}{(\prod_{j=1}^m\|g_j\|_{L^{2}})^{\frac1m}}<\infty,
\end{equation}
where $q=\frac{2n}{\sum_{j=1}^m n_j}$.

We will be interested in the special case when $V_j$ are linear subspaces, $l_j=\pi_j$ are orthogonal projections, $n=5$, $m=10$, $p_j=4$ and $n_j=2$. Note that \eqref{fek1} is satisfied in this case.  For future use, we reformulate the theorem in this case.
\begin{theorem}
\label{BCCT1}
The quantity
$$\sup_{g_j\in L^{2}({V_j})}\frac{\|(\prod_{j=1}^{10}(g_j\circ \pi_j))^{\frac1{10}}\|_{L^5(\R^5)}}{(\prod_{j=1}^{10}\|g_j\|_{L^{2}(V_j)})^{\frac1{10}}}$$
is finite if and only if
\begin{equation}
\label{fek3}
\dim(V)\le \frac14\sum_{j=1}^{10}{\dim(\pi_j(V))}, \text{ for every linear subspace }V\subset \R^5.
\end{equation}
\end{theorem}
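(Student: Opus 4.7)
The plan is to invoke the Brascamp--Lieb result, Theorem \ref{BCCT}, in the specialized configuration $n=5$, $m=10$, $p_j=4$, $n_j=2$, with $V_j\subset\R^5$ a linear $2$-plane and $l_j=\pi_j$ the orthogonal projection onto $V_j$. Since $\pi_j$ is surjective onto its image $V_j$, the hypotheses of Theorem \ref{BCCT} are met.

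First I would observe that the scaling identity \eqref{fek1} holds automatically: $\sum_{j=1}^{10}n_j/p_j=10\cdot(2/4)=5=n$. Substituting $p_j=4$ in the transversality condition \eqref{fek2} gives
\bas
\dim(V)\le\sum_{j=1}^{10}\frac{\dim(\pi_j(V))}{4}=\frac{1}{4}\sum_{j=1}^{10}\dim(\pi_j(V))
\eas
for every linear $V\subset\R^5$, which is exactly \eqref{fek3}. Therefore by Theorem \ref{BCCT}, the $L^4$ Brascamp--Lieb bound \eqref{fek17} holds in this setting if and only if \eqref{fek3} does.

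Second, I would convert \eqref{fek17} into the $L^5$ quotient form appearing in the statement of Theorem \ref{BCCT1}. Substituting $f_j=|g_j|^{1/2}$ (allowed since both sides of \eqref{fek17} depend only on $|f_j|$, and the map $g_j\mapsto|g_j|^{1/2}$ is onto the nonnegative $L^4$ functions), one has $\|f_j\|_{L^4(V_j)}=\|g_j\|_{L^2(V_j)}^{1/2}$ together with
\bas
\Lambda(f_1,\ldots,f_{10})=\int_{\R^5}\prod_{j=1}^{10}|g_j\circ\pi_j|^{1/2}\,dx=\Big\|\prod_{j=1}^{10}(g_j\circ\pi_j)^{1/10}\Big\|_{L^5(\R^5)}^5,
\eas
where the last identity uses $5\cdot(1/10)=1/2$. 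Taking fifth roots of the resulting inequality $\Lambda\le C\prod_j\|f_j\|_{L^4}$ converts it into boundedness of the quotient displayed in Theorem \ref{BCCT1}, and the reverse substitution $g_j=f_j^2$ shows the two statements are genuinely equivalent.

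Since the proof reduces to a direct citation of Theorem \ref{BCCT} plus an elementary power substitution, there is no serious obstacle at this stage. The genuine work --- verifying that condition \eqref{fek3} holds for the specific $10$-tuples of $2$-planes that will arise from the transverse decomposition of $\M$ --- is postponed to Sections \ref{Trans} and \ref{se:multi}, where it is recast as a checkable geometric criterion.
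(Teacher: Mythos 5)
Your proposal is correct and matches the paper's (largely implicit) argument: the paper likewise obtains Theorem \ref{BCCT1} by specializing Theorem \ref{BCCT} to $n=5$, $m=10$, $p_j=4$, $n_j=2$ and passing to the equivalent $L^2\to L^q$ formulation \eqref{fek18}, which your substitution $f_j=|g_j|^{2/p}=|g_j|^{1/2}$ makes explicit and which correctly yields the exponent $q=2m/p=5$. No gaps.
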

We chose to work with $10-$ linearity because the $V_j$ to which we will apply the Brascamp--Lieb inequality satisfy the corresponding transversality assumption \eqref{fek3}. This will be proved in Section \ref{Trans}. With a bit of additional effort, the number 10 can be lowered to a smaller one. We have made no attempt to discover what this number is, as this would have no effect on the results of the paper. It is worth mentioning however that the $V_j$ that we work with are slightly less transverse than the generic ones.

Remark \ref{rek456} will show the relevance of the space $L^5$ from Theorem \ref{BCCT1}.

\bigskip

\section{Transversality}
\label{Trans}
In this section we introduce a quantitative form of transversality suited for our purposes and will prove a "uniform version`` of the Brascamp--Lieb inequality. This will be a first step towards proving the $10-$ linear restriction Theorem \ref{tfek4} in Section \ref{se:multi}.

Given two vectors $u=(u_1,\ldots,u_5),v=(v_1,\ldots,v_5)$ in $\R^5$, define the quadratic function
$$Q_{u,v}(x,y)=2x^2(v_3u_5-v_5u_3)+2y^2(v_5u_4-v_4u_5)+4xy(v_3u_4-v_4u_3)+$$
$$+2x(v_3u_2-v_2u_3)+2y(v_5u_2-v_2u_5)+v_1u_2-v_2u_1=0.$$
\begin{definition}
\label{dfek1}
A collection consisting of ten sets $S_1,\ldots,S_{10}\subset [0,1]^2$ is said to be $\nu-$transverse if the following two requirements are satisfied:
\medskip

(i) for each $i\not= j\not= k\in\{1,2,\ldots,10\}$ and each $(x_i,y_i)\in S_i$, $(x_j,y_j)\in S_j$ and $(x_k,y_k)\in S_k$ there is a permutation $\pi:\{i,j,k\}\to \{i,j,k\}$ such that
\begin{equation}
\label{fek4}
\big|(y_{\pi(j)}-y_{\pi(i)})[(x_{\pi(j)}-x_{\pi(i)})(y_{\pi(k)}-y_{\pi(i)})-(x_{\pi(k)}-x_{\pi(i)})(y_{\pi(j)}-y_{\pi(i)})]\big|\ge \nu,
\end{equation}
\medskip

(ii) for each $(x_j,y_j)\in S_{i_j}$ with $1\le i_1\not=\ldots\not=i_{5}\le 10$ we have
\begin{equation}
\label{fek5}
\inf_{u,v,w}\max_j(|Q_{u,v}(x_j,y_j)|+|Q_{u,w}(x_j,y_j)|)\ge \nu,
\end{equation}
where the infimum is taken over all orthonormal triples $(u,v,w)\in\R^5\times \R^5\times \R^5$.

\end{definition}
It is immediate that transverse sets are pairwise disjoint. Requirement (i) is essentially about the fact that three points in three different sets $S_i$ do not come "close`` to sitting on a line. Requirement (ii) says that any five points from five different sets do not sit "close`` to two quadratic curves of the type $Q_{u,v}$ and $Q_{u,w}$. Note that since $u,v,w$ are linearly independent, $Q_{u,v}=0$ and $Q_{u,w}=0$ are always distinct curves, and thus, their intersection is either a line or a finite set with   at most four points. This shows that given (i), the requirement (ii) above is always satisfied if the inequality $\ge \nu$ is replaced with $>0$.

The relevance of this definition for Theorem \ref{BCCT1} is presented in the following result. It may help to realize that the tangent plane at the point $(x,y,x^2,y^2,xy)\in\M$ is spanned by the vectors $(1,0,2x,0,y)$ and $(0,1,0,2y,x)$.
\begin{proposition}
\label{pfek1}
Assume we have ten points $(x_j,y_j)\in [0,1]^2$ such that the sets $S_j=\{(x_j,y_j)\}$ are $\nu-$transverse for some $\nu>0$. Then the ten planes $V_j,\;1\le j\le 10$  spanned by the vectors $n_j=(1,0,2x_j,0,y_j)$ and $m_j=(0,1,0,2y_j,x_j)$ satisfy requirement \eqref{fek3}.
\end{proposition}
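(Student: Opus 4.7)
The plan is to verify \eqref{fek3} by case analysis on $\dim V$. Since the orthogonal complement of $\pi_j(V)$ inside $V_j$ is precisely $V_j \cap V^\perp$, the identity $\dim \pi_j(V) = 2 - \dim(V_j \cap V^\perp)$ recasts \eqref{fek3} in the equivalent dual form
\[
\sum_{j=1}^{10}\dim(V_j \cap W) \le 4\dim W \qquad\text{for every subspace } W \subset \R^5.
\]
The cases $\dim W \in \{0, 5\}$ are trivial. For $\dim W = 1$ with $W = \R w$, writing $w = \alpha n_j + \beta m_j$ forces $\alpha = w_1, \beta = w_2$ and then over-determines $(x_j, y_j)$ through the remaining three components, so at most one $j$ satisfies $w \in V_j$ and the sum is $\le 1$. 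For $\dim W \in \{3, 4\}$, the condition $V_j \subset W$ translates, via any basis of $W^\perp$, into a linear system on $(x_j, y_j)$ whose solution set is contained in a line. Transversality~(i) bounds the number of our ten points on any line by $2$, yielding $b_2 := \#\{j : V_j \subset W\} \le 2$; writing $b_k := \#\{j : \dim(V_j \cap W) = k\}$, one obtains the sum $\le 10 + b_2 \le 12$ when $\dim W = 3$ and $\le 2 \cdot 2 + 8 = 12 \le 16$ when $\dim W = 4$.

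The substantial case is $\dim W = 2$, where one must show $b_1 + 2 b_2 \le 8$. As the $V_j$ are pairwise distinct $2$-planes, $b_2 \le 1$, so it suffices to bound $b_1 + b_2 = \#\{j : V_j \cap W \ne 0\}$. Fix an orthonormal basis $(u, v, w)$ of $W^\perp$; then $V_j \cap W \ne 0$ iff the $3 \times 2$ matrix with rows $(\langle n_j, p\rangle, \langle m_j, p\rangle)$, $p \in \{u, v, w\}$, has rank $\le 1$, equivalently iff all three minors
\[
D_{p, q}(x_j, y_j) := \langle n_j, p\rangle \langle m_j, q\rangle - \langle n_j, q\rangle \langle m_j, p\rangle = \langle p \wedge q,\, n_j \wedge m_j\rangle_{\Lambda^2 \R^5}
\]
vanish. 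Each $D_{p, q}$ is a polynomial of degree $\le 2$ in $(x, y)$, namely the Plücker pairing of $p \wedge q$ with the tangent bivector $n \wedge m$. In the non-degenerate subcase (two of these $D$'s are coprime), Bezout bounds the number of common zeros by $4$; any common linear factor is absorbed by~(i), which allows at most $2$ of the ten points on any line, so again at most $4$ of them qualify.

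The residual degeneracy is that all three $D_{p, q}$ are proportional as polynomials, reducing $V_j \cap W \ne 0$ to the condition that $(x_j, y_j)$ lies on a single conic; this is where the quadratic transversality~(ii) enters. Applied to the orthonormal triple $(u, v, w)$, it guarantees that at most four of our ten points jointly satisfy $Q_{u, v} = 0 = Q_{u, w}$. The explicit identity
\[
D_{p, q}(x, y) + Q_{p, q}(x, y) = 2 y\, p_{14}(p, q) + x\, p_{15}(p, q) + y\, p_{25}(p, q),
\]
with $p_{ij}(p, q) := p_i q_j - p_j q_i$, together with a suitable rotation of the basis within $W^\perp$ to align the $D$- and $Q$-curves, transfers the $Q$-bound to the same bound $b_1 + b_2 \le 4$, giving $b_1 + 2 b_2 \le 5 \le 8$. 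The main obstacle is precisely this last step: faithfully converting the statement of~(ii), which involves the auxiliary quadratic forms $Q_{u, v}$, into a bound on the $D$-conditions that actually characterize the transversality defect $V_j \cap W \ne 0$ in the degenerate regime.
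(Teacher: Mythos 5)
Your dual reformulation $\sum_{j=1}^{10}\dim(V_j\cap W)\le 4\dim W$ with $W=V^\perp$ is correct, and the cases $\dim W\in\{0,1,3,4,5\}$ (the paper's $\dim V\in\{5,4,2,1,0\}$) are handled soundly: your ``solution set contained in a line'' plus the no-three-collinear-points consequence of \eqref{fek4} is an acceptable substitute for the paper's observation that no line can be orthogonal to three distinct $V_j$. The genuine gap is exactly where you flag it: the case $\dim W=2$ (the paper's $\dim V=3$), the only case in which hypothesis \eqref{fek5} is needed, is not closed. Your Bezout/degeneracy dichotomy does not finish it, because in the degenerate regime nothing you prove prevents all ten points from lying on the single conic $\{D_{u,v}=0\}$, and the proposed ``rotation of the basis within $W^\perp$ to align the $D$- and $Q$-curves'' cannot work in general: both triples $(D_{p,q})$ and $(Q_{p,q})$ are alternating bilinear in $(p,q)$ and hence transform covariantly under $\Lambda^2$ of any orthogonal change of basis of $W^\perp$, so the discrepancy $L_{p,q}=D_{p,q}+Q_{p,q}$ transforms the same way and can only be rotated into the right position when the map $\omega\mapsto L_\omega$ on $\Lambda^2W^\perp$ already has rank at most one.

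The paper closes this case in two lines, with no Bezout argument: if $V_j\cap W\ne 0$ for five values of $j$, then for each such $j$ the $2\times 3$ matrix with rows $(n_j\cdot u, n_j\cdot v, n_j\cdot w)$ and $(m_j\cdot u, m_j\cdot v, m_j\cdot w)$ has rank at most one, so its two minors in the column pairs $(u,v)$ and $(u,w)$ vanish; the paper identifies these minors with $Q_{u,v}(x_j,y_j)$ and $Q_{u,w}(x_j,y_j)$, and \eqref{fek5} forbids five simultaneous (even approximate) zeros among points from five distinct $S_{i_j}$. Your identity $D_{p,q}+Q_{p,q}=x\,p_{15}+y\,(2p_{14}+p_{25})$ is a correct computation against the formula for $Q_{u,v}$ as printed in Section \ref{Trans}, and it shows that the printed formula omits exactly these linear terms of the minor; since the proof of the proposition and the very design of Definition \ref{dfek1}(ii) only make sense when $Q_{u,v}$ \emph{is} the minor $\det\bigl[\begin{smallmatrix}n\cdot u& n\cdot v\\ m\cdot u&m\cdot v\end{smallmatrix}\bigr]$ (up to sign), you should read \eqref{fek5} with that corrected $Q$, at which point condition (ii) applies verbatim to the rank condition, your degenerate subcase disappears, and the count $\#\{j: V_j\cap W\ne0\}\le 4$ follows immediately. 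As written, however, your argument for the essential case is incomplete.
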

\begin{proof}
We start with the easy observation that the ten $V_j$ are distinct. Indeed, note that the rank of the matrix
$$\begin{bmatrix}1&0&2x_i&0&y_i\\1&0&2x_j&0&y_j\\0&1&0&2y_i&x_i\\0&1&0&2y_j&x_j\end{bmatrix}$$
is at least three if $(x_i,y_i)\not=(x_j,y_j)$.

It suffices to check \eqref{fek3} for linear subspaces $V$ with dimension between 1 and 4, as the case of dimension 5 is trivial.

The next observation is that a one dimensional subspace can not be orthogonal to three distinct $V_j$. If this were to be the case with $V_i,V_j,V_k$, then these three planes would be forced to belong to a hyperplane. This in turn would force (for example)
$$\det \begin{bmatrix}1&0&2x_i&0&y_i\\1&0&2x_j&0&y_j\\1&0&2x_k&0&y_k\\0&1&0&2y_i&x_i\\0&1&0&2y_j&x_j\end{bmatrix}$$
to be zero, contradicting \eqref{fek4}. This observation shows that \eqref{fek3} is satisfied if $\dim(V)\le 2$, as $\dim(\pi_j(V))\ge 1$ for at least eight values of $j$.

Consider now the case of $V$ with $\dim(V)=3$ and with orthonormal basis $u,v,w$. We will argue that there are at least six $V_j$ with $\dim(\pi_j(V))=2$.  This immediately implies \eqref{fek3}.
Assume for contradiction that $\dim(\pi_j(V))\le 1$ for five values of $j$. By the rank-nullity theorem, we have that $V$ contains a two dimensional subspace $W_j$ orthogonal to $V_j$. This is the same as saying that the rank of the matrix
$$\begin{bmatrix}n_j\cdot u&n_j\cdot v& n_j\cdot w\\ m_j\cdot u&m_j\cdot v& m_j\cdot w\end{bmatrix}.$$
is at most one. In particular,
$$\det\begin{bmatrix}n_j\cdot u&n_j\cdot v\\ m_j\cdot u&m_j\cdot v\end{bmatrix}=0=\det\begin{bmatrix}n_j\cdot u&n_j\cdot w\\ m_j\cdot u&m_j\cdot w\end{bmatrix}.$$
This amounts to  $Q_{u,v}(x_j,y_j)=Q_{u,w}(x_j,y_j)=0$, for five values of $j$.
Note however that this contradicts \eqref{fek5}.

The last case that deserves analysis is $\dim(V)=4$. We will show that there can be at most two $V_j$ with $\dim(\pi_j(V))\le 1$, and thus \eqref{fek3} will again be satisfied. The rank-nullity theorem implies that $\dim(\pi_j(V))\le 1$ is equivalent with the existence of a three dimensional subspace $W_j$ of $V$ orthogonal to $V_j$. If this happened for three values of $j$, there would exist a one dimensional subspace orthogonal to these $V_j$. This in turn would force the three $V_j$ to belong to a hyperplane, a scenario that has been ruled out earlier.

\end{proof}

We can now prove the following "uniform version`` of the Brascamp--Lieb inequality.

\begin{theorem}
\label{tfek2}
Assume the planes $V_j,\;1\le j\le 10$ through the origin are spanned by the vectors $(1,0,2x_j,0,y_j)$ and $(0,1,0,2y_j,x_j)$ with $(x_j,y_j)\in [0,1]^2$ satisfying \eqref{fek4} and \eqref{fek5} for some $\nu>0$. Denote as before by $\pi_j$ their associated orthogonal projections. Then there exists a constant $\Theta_\nu<\infty$ depending only on $\nu$ such that
$$\sup_{g_j\in L^{2}({V_j})}\frac{\|(\prod_{j=1}^{10}(g_j\circ \pi_j))^{\frac1{10}}\|_{L^5(\R^5)}}{(\prod_{j=1}^{10}\|g_j\|_{L^{2}(V_j)})^{\frac1{10}}}\le \Theta_\nu.$$
\end{theorem}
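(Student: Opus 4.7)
The plan is to derive Theorem \ref{tfek2} from Proposition \ref{pfek1} and Theorem \ref{BCCT1} by a soft compactness/contradiction argument, in the spirit hinted at in the introduction where the authors declare that they do not seek quantitative dependence on transversality.

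Suppose, for some fixed $\nu>0$, the conclusion fails. Then there exists a sequence of $\nu$-transverse configurations $\{(x_j^{(n)},y_j^{(n)})\}_{j=1}^{10}$ in $[0,1]^2$, with associated planes $V_j^{(n)}$ and orthogonal projections $\pi_j^{(n)}$, together with normalized functions $g_j^{(n)}\in L^2(V_j^{(n)})$ of unit norm, such that
$$
A_n:=\Big\|\Big(\prod_{j=1}^{10}(g_j^{(n)}\circ\pi_j^{(n)})\Big)^{1/10}\Big\|_{L^5(\R^5)}\longrightarrow\infty.
$$
Since $([0,1]^2)^{10}$ is compact and the defining inequalities \eqref{fek4} and \eqref{fek5} are closed conditions, a subsequence of the configurations converges to a limiting configuration $\{(x_j^*,y_j^*)\}$ that is still $\nu$-transverse. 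For this limit, Proposition \ref{pfek1} tells us that the planes $V_j^*$ spanned by $(1,0,2x_j^*,0,y_j^*)$ and $(0,1,0,2y_j^*,x_j^*)$ satisfy the Brascamp--Lieb transversality condition \eqref{fek3}, so Theorem \ref{BCCT1} provides a finite constant $C^*<\infty$ bounding the corresponding multilinear expression.

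The main obstacle is to upgrade the finiteness of the BL constant at the single configuration $\{V_j^*\}$ to a uniform bound along the approaching configurations $\{V_j^{(n)}\}$. In other words, one needs upper semicontinuity of the Brascamp--Lieb constant with respect to the projection data $\pi_j$. This can be handled by the following direct route: the map $(x_j,y_j)\mapsto V_j$ is polynomial, so each $\pi_j^{(n)}$ converges to $\pi_j^*$ in operator norm. Pick isometric identifications $\Phi_j^{(n)}:V_j^{(n)}\to V_j^*$ that converge to the identity, and transport each $g_j^{(n)}$ to $\tilde g_j^{(n)}=g_j^{(n)}\circ (\Phi_j^{(n)})^{-1}\in L^2(V_j^*)$, which is still of unit norm. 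A change of variables shows that $\pi_j^{(n)}(x)$ differs from $(\Phi_j^{(n)})^{-1}\pi_j^*(x)$ by a vanishingly small perturbation, so standard continuity of the $L^p$ norm under $L^\infty$-small shifts in the argument yields
$$
A_n\le \Big\|\Big(\prod_{j=1}^{10}(\tilde g_j^{(n)}\circ\pi_j^*)\Big)^{1/10}\Big\|_{L^5(\R^5)}+o(1)\le C^*+o(1),
$$
contradicting $A_n\to\infty$.

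An alternative packaging of the same idea is to invoke Gaussian saturation from \cite{BCCT}: the Brascamp--Lieb constant is the supremum of the (manifestly continuous) Gaussian Brascamp--Lieb expression over centered Gaussian inputs, and this description, combined with the continuous dependence of the projections $\pi_j$ on the data $(x_j,y_j)$, yields upper semicontinuity of the BL constant on the compact set of $\nu$-transverse ten-tuples. Either route produces the required uniform constant $\Theta_\nu$, depending only on $\nu$.
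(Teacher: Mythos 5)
Your overall strategy coincides with the paper's: a compactness/contradiction argument on the compact set of $\nu$-transverse ten-tuples of planes, reducing matters to the finiteness of the Brascamp--Lieb constant at a single limiting configuration via Proposition \ref{pfek1} and Theorem \ref{BCCT1}. You also correctly isolate the crux: one must upgrade finiteness of the BL constant at the limit configuration to a uniform bound along the approximating sequence, i.e., one needs upper semicontinuity of the BL constant in the projection data. The paper packages this by asserting that the functional $F(V_1,\ldots,V_{10})$ is continuous as a map into the Riemann sphere, so that on the compact set $C$ it could only be unbounded by actually attaining the value $\infty$, which Theorem \ref{BCCT1} forbids.

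The gap is that neither of the two justifications you offer for this semicontinuity step works as stated. The direct perturbation route fails because continuity of translation in $L^2$ is not uniform over the unit ball: the functions $g_j^{(n)}$ vary with $n$, so there is no reason that $g_j^{(n)}\circ\pi_j^{(n)}$ and $g_j^{(n)}\circ(\Phi_j^{(n)})^{-1}\circ\pi_j^*$ are close in any norm (take $g_j^{(n)}$ concentrated or oscillating at a scale much finer than $\|\pi_j^{(n)}-\pi_j^*\|$); moreover the discrepancy between the two arguments is of size $O(\epsilon_n|x|)$, not uniformly small in $x$. The Gaussian-saturation route gives the wrong direction: writing the BL constant as a supremum of quantities continuous in the data yields \emph{lower} semicontinuity, whereas your contradiction needs \emph{upper} semicontinuity. (Local boundedness and continuity of the BL constant in the data are genuinely nontrivial facts, not consequences of Gaussian saturation alone; the paper itself sidesteps the issue by declaring the continuity of $F$ to be an easy-to-check observation.) So your architecture matches the paper's, but the one step you yourself flag as the main obstacle is not actually closed by either of your arguments.
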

\begin{proof}
The proof will rely on a few well-known or easy to check observations.
The Grassmannian $\G(2,\R^5)$ is the collection of all (two dimensional) planes containing the origin in $\R^5$. It is a compact metric space when equipped with the metric
$$d_{\G(2,\R^5)}(X,Y)=\|P_X-P_Y\|,$$
where $P_X,P_Y$ are the associated projections, and their difference is measured in the operator norm. The function $$F:\G(2,\R^5)^{10}\to\C^*$$ defined by
$$F(V_1,\ldots,V_{10})=\sup_{g_j\in L^{2}({V_j})}\frac{\|(\prod_{j=1}^{10}(g_j\circ \pi_j))^{\frac1{10}}\|_{L^5(\R^5)}}{(\prod_{j=1}^{10}\|g_j\|_{L^{2}(V_j)})^{\frac1{10}}}$$
is continuous, when the Riemann sphere $\C^*$ is equipped with the spherical metric.

The collection  $C$ of all ten-tuples $(V_1,\ldots,V_{10})$ satisfying our  hypothesis is closed in $\G(2,\R^5)^{10}$ (with respect to the product topology), hence compact. Assume for contradiction that the conclusion of the theorem fails. Using compactness and the continuity of $F$, it follows that $F(V_1,\ldots,V_{10})=\infty$ for some $(V_1,\ldots,V_{10})\in C$. This however is impossible, due to Theorem \ref{BCCT1} and Proposition \ref{pfek1}.

\end{proof}

\section{The $10-$linear restriction theorem}
\label{se:multi}

 For each $S\subset [0,1]^2$ and each real number $N\ge 1$, let $\A_{S,\frac1N}$ be the $N^{-1}-$neighborhood of
$$\M_S:=\{(x,y,x^2,y^2,xy):(x,y)\in S\}.$$

The key result proved in this section is the following $10-$linear restriction theorem. It is a close relative of the multilinear restriction theorem of Bennett, Carbery and Tao \cite{BCT}. The main difference is that while their theorem applies to hyper-surfaces, our result below is for the manifold $\M$ with co-dimension three.
\begin{theorem}
\label{tfek4}For each $0<\nu\ll 1$, each $\nu-$transverse squares $S_1,\ldots,S_{10}\subset [0,1]^2$, each $f_j:\A_{S_j,\frac1N}\to\C$, each $\epsilon>0$ and  each ball $B_N$ in $\R^5$ with radius $N\ge 1$  we have
$$\|(\prod_{j=1}^{10}\widehat{f}_j)^{\frac1{10}}\|_{L^5(B_N)}\lesssim_{\epsilon,\nu} N^{\epsilon-\frac32}(\prod_{j=1}^{10}\|f_j\|_{L^2(\A_{S_j,\frac1N})})^{\frac1{10}}.$$
\end{theorem}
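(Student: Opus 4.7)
The plan is to adapt the induction-on-scales strategy of Bennett, Carbery and Tao \cite{BCT} to the codimension-three setting of $\M$. Let $A(R,\nu)$ denote the smallest constant for which
$$\|(\prod_{j=1}^{10}\widehat{f}_j)^{\frac{1}{10}}\|_{L^5(B_R)}\le A(R,\nu)\,R^{-\frac32}\,(\prod_{j=1}^{10}\|f_j\|_{L^2})^{\frac{1}{10}}$$
holds whenever $S_1,\ldots,S_{10}\subset [0,1]^2$ are $\nu$-transverse squares and $f_j$ is supported in $\A_{S_j,1/R}$. The target is a recursion of the form $A(R,\nu)\le C_\nu R^\delta A(R^{1/2},\nu')$ valid for arbitrarily small $\delta>0$; iterating this $O(\log\log R)$ times produces the desired bound $A(N,\nu)\lesssim_{\epsilon,\nu} N^\epsilon$, which is exactly the content of Theorem \ref{tfek4}.

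For the single-scale step I would cover $B_R$ by a finitely overlapping family of balls of radius $R^{1/2}$, and partition each $S_j$ into caps $\tau$ of side $R^{-1/2}$, writing $f_j=\sum_\tau f_{j,\tau}$. On such a cap the Taylor error between $\M_\tau$ and its tangent plane $V_{j,\tau}$ is $O(R^{-1})$, which is at most the thickness of $\A_{S_j,1/R}$, so at the dual scale $R^{1/2}$ the function $\widehat{f}_{j,\tau}$ is well approximated by the Fourier transform of an $L^2$ density on $V_{j,\tau}$ (made rigorous via a wave-packet decomposition or a standard mollification step). Because the caps $\tau$ inherit the $\nu$-transversality of the $S_j$, their tangent planes satisfy the hypothesis of Proposition \ref{pfek1}, and Theorem \ref{tfek2} applies uniformly on each $B_{R^{1/2}}$ to yield a ten-linear $L^5$ bound in terms of the $L^2$-masses on the tangent planes. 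Summing over the cover of $B_R$ and combining with Plancherel/$L^2$-orthogonality across the caps $\tau$ returns an inequality of the desired recursive shape, after rescaling each cap back to a unit square and invoking the induction hypothesis at the smaller scale $R^{1/2}$.

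The main obstacle I expect is the rigorous execution of the uncertainty-principle step in the second paragraph: reducing $\widehat{f}_{j,\tau}|_{B_{R^{1/2}}}$ to an honest density on $V_{j,\tau}$ with the correct $L^2$ mass, in a way that is compatible with Theorem \ref{tfek2} and that loses only a factor $R^{O(\epsilon)}$. This is the same technical core that appears in \cite{BCT} and in the decoupling program \cite{BD3}, and I expect the present case to run analogously. A secondary issue is tracking the transversality parameter $\nu'$ produced by the rescaling; however, because Theorem \ref{tfek2} is uniform in the configuration (through the compactness argument of Section \ref{Trans}) and since the authors explicitly allow non-quantitative transversality dependence, the iteration does not degrade and the final constant $C_{\epsilon,\nu}$ remains finite for every $\nu>0$.
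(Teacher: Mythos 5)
Your first paragraph correctly identifies the Bennett--Carbery--Tao induction on scales $\MLR_R\lesssim R^{\delta}\MLR_{R^{1/2}}$ that the paper also uses, but the single-scale step in your second paragraph has a genuine gap: you cannot close the recursion with ``Theorem \ref{tfek2} on each $B_{R^{1/2}}$ plus Plancherel/$L^2$-orthogonality.'' After applying the inductive hypothesis on each ball $B$ of radius $R^{1/2}$, what remains to be summed is
$$\sum_{B}\prod_{j=1}^{10}\Bigl(\sum_{T\in\Q_j(B)}|c_T|^2\Bigr)^{\frac14},$$
where $\Q_j(B)$ is the set of wave-packet plates of $f_j$ meeting $B$. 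The exponents add up to $\frac{10}{4}=\frac52>1$, so no H\"older or orthogonality argument controls this sum by $\prod_j(\sum_T|c_T|^2)^{1/4}$ with only an $R^{\epsilon}$ loss; the needed bound is exactly the multilinear Kakeya-type inequality \eqref{fek8} (equivalently Theorem \ref{tfek5}), which encodes how transversality limits the overlap of the plate families and is the substantive geometric core of the proof. In the paper this occupies the entire second half of Section \ref{se:multi} and is proved by Guth's method: the parallel-plates case (Corollary \ref{cfek3}) follows from Theorem \ref{tfek2}, and the general case is obtained by an angle decomposition and a second, independent induction on scales (Lemma \ref{lfek4}, Proposition \ref{pfek2}). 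Your proposal omits this ingredient entirely.

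A related problem is that Theorem \ref{tfek2} applies to functions of the form $g_j\circ\pi_j$, i.e.\ constant along the fibers of a \emph{single} projection $\pi_j$ per factor. On a ball $B_{R^{1/2}}$ the function $|\widehat{f_j}|$ is a superposition of plates whose orientations vary over all caps $\tau\subset S_j$, so the Brascamp--Lieb inequality does not apply to it directly; handling many nearby but non-parallel orientations within one family is precisely what the $\theta(\delta)$-angle approximation \eqref{fek6} and the iteration in Lemma \ref{lfek4} accomplish. Finally, the induction at scale $R^{1/2}$ in the paper does not involve parabolically rescaling caps back to unit squares (the squares $S_j$ and their transversality parameter $\nu$ stay fixed throughout), so the issue you flag about tracking $\nu'$ does not arise, but neither does that step do the work you assign to it.
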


\begin{remark}
\label{rek456}
It is rather immediate that
$$\|(\prod_{j=1}^{10}\widehat{f}_j)^{\frac1{10}}\|_{L^{\infty}(B_N)}\le (\prod_{j=1}^{10}\|\widehat{f}_j\|_{L^{\infty}(\R^5)})^{\frac1{10}}\lesssim  N^{-\frac32}(\prod_{j=1}^{10}\|f_j\|_{L^2(\A_{S_j,\frac1N})})^{\frac1{10}}.$$
When combined with Theorem \ref{tfek4}, this shows that
\begin{equation}
\label{fio vnhtyrgwrugj8btm8-8=-2r93]xe=-34.or,0-cvr9yt0-}
\|(\prod_{j=1}^{10}\widehat{f}_j)^{\frac1{10}}\|_{L^p(B_N)}\lesssim_{\epsilon,\nu} N^{\epsilon-\frac32}(\prod_{j=1}^{10}\|f_j\|_{L^2(\A_{S_j,\frac1N})})^{\frac1{10}}
\end{equation}
holds for each $p\ge 5$. The fact that it holds precisely for $p=5$ will be crucial for achieving the sharp range in Theorem \ref{tfek6}. On the other hand,  $p$ can not be lowered below 5 in \eqref{fio vnhtyrgwrugj8btm8-8=-2r93]xe=-34.or,0-cvr9yt0-}. Indeed, apply \eqref{fio vnhtyrgwrugj8btm8-8=-2r93]xe=-34.or,0-cvr9yt0-} with
$\widehat{f_j}=\phi_{T_j}$, where $\phi_{T_j}$ is a single wave-packet as in \eqref{fek9}. We can arrange  the intersection of the plates $T_j$ to contain a ball of radius $\sim N^{\frac12}$. Then \eqref{fio vnhtyrgwrugj8btm8-8=-2r93]xe=-34.or,0-cvr9yt0-} yields
$$N^{\frac{5}{2p}}\lesssim_{\epsilon,\nu}N^{\epsilon-\frac32+2},$$
which amounts to $p\ge 5$.
\end{remark}
Theorem \ref{tfek4}  implies  the following one, which we will prefer in our applications.

\begin{theorem}
\label{tfek1}For each $0<\nu\ll 1$, each $\nu-$transverse sets $S_1,\ldots,S_{10}\subset [0,1]^2$, each $g_j:S_j\to\C$, each ball $B_N\subset\R^5$ with radius $N\ge 1$ and each $\epsilon>0$ we have
$$\|(\prod_{j=1}^{10}E_{S_j}g_j)^{\frac1{10}}\|_{L^5(B_N)}\lesssim_{\epsilon,\nu} N^{\epsilon}(\prod_{j=1}^{10}\|g_j\|_{L^2(S_j)})^{\frac1{10}}.$$
\end{theorem}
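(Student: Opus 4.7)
The plan is to deduce Theorem \ref{tfek1} from the restriction-type Theorem \ref{tfek4} via the standard thickening trick that converts the oscillatory extension operator $E_{S_j}g_j$ into the Fourier transform of an $L^2$ function supported in the neighborhood $\A_{S_j,1/N}$. Since $\M_{S_j}$ is a codimension-three graph over the $(s,t)$-plane via $\Phi(s,t) = (s,t,s^2,t^2,st)$, the neighborhood $\A_{S_j,1/N}$ is naturally parametrized (up to harmless bi-Lipschitz factors) by $(s,t,z_1,z_2,z_3) \in S_j \times B(0,N^{-1})$ through the map $(s,t,z) \mapsto \Phi(s,t) + (0,0,z_1,z_2,z_3)$.

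First I would fix an even Schwartz bump $\psi$ on $\R^3$ with $\psi \geq 1$ on the unit ball and with $\widehat{\psi}$ supported in a small neighborhood of the origin. Setting $\eta(z) := N^3 \widehat{\psi}(Nz)$, the function $\eta$ is supported in $B(0, cN^{-1})$ for some small constant $c$, and I would define
$$f_j\bigl(\Phi(s,t) + (0,0,z_1,z_2,z_3)\bigr) := g_j(s,t)\,\eta(z_1,z_2,z_3).$$
A direct Fourier calculation identifies $|\widehat{f_j}(\xi)|$ with $|E_{S_j}g_j(\xi)| \cdot \psi(\xi_3/N, \xi_4/N, \xi_5/N)$ (the harmless sign flip $\xi \mapsto -\xi$ from the Fourier convention is absorbed into the arbitrariness of the center of $B_N$), and the last factor is $\geq 1$ on any ball of radius $N$, so $|\widehat{f_j}| \geq |E_{S_j}g_j|$ on $B_N$. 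A one-line Fubini computation also yields $\|f_j\|_{L^2(\A_{S_j,1/N})}^2 \sim N^3 \|g_j\|_{L^2(S_j)}^2$, the factor $N^3$ coming from the $N^{-1}$ thickness in the three transverse directions.

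Inserting these $f_j$ into Theorem \ref{tfek4} then gives
$$\Bigl\|\bigl(\prod_{j=1}^{10} E_{S_j}g_j\bigr)^{1/10}\Bigr\|_{L^5(B_N)} \lesssim \Bigl\|\bigl(\prod_{j=1}^{10} \widehat{f_j}\bigr)^{1/10}\Bigr\|_{L^5(B_N)} \lesssim_{\epsilon,\nu} N^{\epsilon - 3/2}\prod_{j=1}^{10}\|f_j\|_{L^2}^{1/10} \lesssim N^{\epsilon}\prod_{j=1}^{10}\|g_j\|_{L^2(S_j)}^{1/10},$$
which is exactly the claimed estimate: the $N^{3/2}$ gain from the multilinear restriction theorem is cancelled precisely by the $N^{3/2}$ loss from the $L^2$ renormalization of $f_j$. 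The one point requiring genuine care is the choice of $\psi$, where $\widehat{\psi}$ must be supported in a small enough ball for $\eta$ to live in $\A_{S_j,1/N}$, while $\psi$ must stay bounded below on the unit ball so the bump factor does not degrade on $B_N$. The apparent mismatch between Theorem \ref{tfek4} (stated for squares) and Theorem \ref{tfek1} (stated for arbitrary sets) is cosmetic, since Theorem \ref{tfek4} only exploits the $\nu$-transversality of the $S_j$, a condition expressed identically for sets and for squares.
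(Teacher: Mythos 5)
Your argument is in substance the paper's own: both proofs thicken $E_{S_j}g_j$ into an $L^2$ function whose Fourier transform is supported in $\A_{S_j,\frac1N}$, feed it into Theorem \ref{tfek4}, and cancel the $N^{-3/2}$ there against the $N^{3/2}$ coming from $\|f_j\|_2\sim N^{3/2}\|g_j\|_2$ (the paper realizes the same thickening by multiplying $E_{S_j}g_j$ by a ball-adapted Schwartz bump $\eta_{B_N}$ whose Fourier transform is supported in $B(0,\frac1{100N})$, which is the dual formulation of your tensor-product construction). One small slip: your factor $\psi(\xi_3/N,\xi_4/N,\xi_5/N)$ is \emph{not} bounded below by $1$ on an arbitrary ball $B_N$, only on balls near the origin, so you must modulate, replacing $\eta(z)$ by $\eta(z)e(-c\cdot z)$ with $c$ the last three coordinates of $c(B_N)$ — which changes neither the support nor the $L^2$ norm of $f_j$, and is exactly what the paper's choice $\eta_{B_N}(x)=\eta(\frac{x-c(B_N)}{N})$ builds in automatically; with that correction the proof is complete.
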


To see that Theorem \ref{tfek4} implies  Theorem \ref{tfek1}, choose a positive Schwartz function $\eta$ on $\R^5$ such that
$$1_{B(0,1)}\le \eta,\;\;\text{ and }\;\;\supp\;\widehat{\eta}\subset B(0,\frac1{100}),$$
and let
\begin{equation}
\label{fek15}
\eta_{B_N}(x)=\eta(\frac{x-c(B_N)}{N}).
\end{equation}
Then, for $g_j$ as in Theorem \ref{tfek1},
$$\|(\prod_{j=1}^{10}E_{S_j}g_j)^{\frac1{10}}\|_{L^5(B_N)}\le \|(\prod_{j=1}^{10}((E_{S_j}g_j)\eta_{B_N}))^{\frac1{10}}\|_{L^5(B_N)}.$$
It suffices to note that the Fourier transform of $(E_{S_j}g_j)\eta_{B_N}$ is supported in $\A_{S_j,\frac1N}$ and that its $L^2$ norm is  $O(N^{-\frac32}\|g_j\|_2)$.
\bigskip

We record for future use the following consequence of Theorem \ref{tfek4}.
\begin{corollary}
\label{fc1}
Let $R_1,\ldots,R_{10}\subset [0,1]^2$ be $\nu$-transverse squares. Then for each $5\le p\le \infty$ and  $g_i:R_i\to\C$ we have
\begin{equation}
\label{we3}
\|(\prod_{i=1}^{10}\sum_{\atop{l(\Delta)=N^{-1/2}}} |E_{\Delta}g_i|^2)^{1/20}\|_{L^{p}(w_{B_N})}\lesssim_{\nu,\epsilon} N^{-\frac{15}{2p}+\epsilon}(\prod_{i=1}^{10}\sum_{\atop{l(\Delta)=N^{-1/2}}}\|E_{\Delta}g_i\|_{L^{\frac{2p}5}(w_{B_N})}^2)^{\frac1{20}}.
\end{equation}
\end{corollary}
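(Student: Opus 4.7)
My plan is to establish the slightly stronger square-function bound
\begin{equation*}
\Bigl\|\prod_{i=1}^{10} F_i^{1/10}\Bigr\|_{L^p(w_{B_N})} \lesssim_{\nu,\epsilon} N^{-\frac{15}{2p}+\epsilon} \prod_{i=1}^{10} \|F_i\|_{L^{2p/5}(w_{B_N})}^{1/10}, \qquad (\star)
\end{equation*}
where $F_i := (\sum_\Delta |E_\Delta g_i|^2)^{1/2}$. Since $2p/5\ge 2$ for $p\ge 5$, Minkowski's inequality inside $L^{2p/5}(w_{B_N})$ gives $\|F_i\|_{L^{2p/5}(w_{B_N})} \le (\sum_\Delta \|E_\Delta g_i\|_{L^{2p/5}(w_{B_N})}^2)^{1/2}$, so $(\star)$ implies the corollary; the endpoint $p=\infty$ reduces to pointwise H\"older and needs no multilinear input.

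To handle $(\star)$, I randomize via Khintchine's inequality. Let $\epsilon^{(i)}=(\epsilon_\Delta^{(i)})_\Delta$ be independent Rademacher sequences indexed by the dyadic $\Delta\subset R_i$ of side $N^{-1/2}$, and set $g_i^{\epsilon^{(i)}}(s,t) := \epsilon_\Delta^{(i)}g_i(s,t)$ on each $\Delta$, so $E_{R_i}g_i^{\epsilon^{(i)}}=\sum_\Delta \epsilon_\Delta^{(i)} E_\Delta g_i$. Khintchine's inequality gives the pointwise comparison $F_i(x)^r\sim_r \mathbb{E}_{\epsilon^{(i)}} |E_{R_i}g_i^{\epsilon^{(i)}}(x)|^r$ for every $r>0$. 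Applying this with $r=p/10$ and $r=2p/5$, integrating against $w_{B_N}$ and using Fubini together with the independence of $\epsilon^{(1)},\dots,\epsilon^{(10)}$, I obtain
\begin{equation*}
\Bigl\|\prod_i F_i^{1/10}\Bigr\|_{L^p(w_{B_N})}^p \sim \mathbb{E}_\epsilon \Bigl\|\prod_i (E_{R_i}g_i^{\epsilon^{(i)}})^{1/10}\Bigr\|_{L^p(w_{B_N})}^p
\end{equation*}
and $\|F_i\|_{L^{2p/5}(w_{B_N})}^{2p/5}\sim \mathbb{E}_{\epsilon^{(i)}}\|E_{R_i}g_i^{\epsilon^{(i)}}\|_{L^{2p/5}(w_{B_N})}^{2p/5}$.

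The key ingredient is an $L^p$-strengthening of Theorem \ref{tfek1}: for $\nu$-transverse $R_j$, $p\ge 5$, and any $h_j:R_j\to\C$,
\begin{equation*}
\Bigl\|\prod_j (E_{R_j}h_j)^{1/10}\Bigr\|_{L^p(w_{B_N})} \lesssim_{\nu,\epsilon} N^{\epsilon-\frac{15}{2p}} \prod_j \|E_{R_j}h_j\|_{L^{2p/5}(w_{B_N})}^{1/10}. \qquad (\dagger)
\end{equation*}
This follows from multilinear Riesz--Thorin interpolation applied to the multilinear map $(h_1,\dots,h_{10})\mapsto \prod_j E_{R_j}h_j$. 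The first endpoint is Theorem \ref{tfek1} combined with the Plancherel-type identity $\|E_{R_j}h_j\|_{L^2(w_{B_N})}\sim N^{3/2}\|h_j\|_{L^2(R_j)}$, which rewrites Theorem \ref{tfek1} as $\|\prod_j(E_{R_j}h_j)^{1/10}\|_{L^5(w_{B_N})}\lesssim N^{\epsilon-3/2}\prod_j\|E_{R_j}h_j\|_{L^2(w_{B_N})}^{1/10}$; the second endpoint is the trivial pointwise bound $\prod_j|E_{R_j}h_j|^{1/10}\le\prod_j\|E_{R_j}h_j\|_{L^\infty}^{1/10}$. Interpolating at $\theta=1-5/p$ (with $\theta=0$ the $L^5$ endpoint and $\theta=1$ the $L^\infty$ endpoint) produces output exponent $p$ and input exponent $2p/5$ with combined constant $N^{(5/p)(\epsilon-3/2)}=N^{5\epsilon/p-15/(2p)}$, yielding $(\dagger)$ after relabeling $\epsilon$. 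Passage from the $L^p(B_N)$ version of Theorem \ref{tfek1} to the weighted $L^p(w_{B_N})$ version is standard, using a finitely overlapping cover of $\R^5$ by balls of radius $N$ as in the proof of the theorem preceding Corollary \ref{cfek4}.

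To conclude, apply $(\dagger)$ pointwise in $\epsilon$ with $h_j=g_j^{\epsilon^{(j)}}$, raise to the $p$-th power, and take $\mathbb{E}_\epsilon$. By independence of the $\epsilon^{(j)}$'s and Jensen's inequality (valid since $p/10\le 2p/5$),
\begin{align*}
\mathbb{E}_\epsilon\prod_j\|E_{R_j}g_j^{\epsilon^{(j)}}\|_{L^{2p/5}(w_{B_N})}^{p/10} &= \prod_j\mathbb{E}_{\epsilon^{(j)}}\|E_{R_j}g_j^{\epsilon^{(j)}}\|_{L^{2p/5}(w_{B_N})}^{p/10}\\
&\le \prod_j\bigl(\mathbb{E}_{\epsilon^{(j)}}\|E_{R_j}g_j^{\epsilon^{(j)}}\|_{L^{2p/5}(w_{B_N})}^{2p/5}\bigr)^{1/4}\\
&\sim \prod_j\|F_j\|_{L^{2p/5}(w_{B_N})}^{p/10},
\end{align*}
using the Khintchine equivalence in the last step. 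Taking $p$-th roots yields $(\star)$. The main obstacle is the correct formulation and proof of $(\dagger)$; everything else is standard Khintchine--Fubini--Jensen bookkeeping.
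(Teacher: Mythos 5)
Your architecture is essentially the paper's: the $L^5$ multilinear restriction estimate localized to $B_N$, the trivial $L^\infty$ bound, an interpolation between them, and Khintchine randomization to insert the square functions. You perform the randomization after the interpolation rather than before (the paper randomizes first and then interpolates), and your Khintchine--Fubini--Jensen bookkeeping at the end, as well as the reduction via Minkowski and the treatment of $p=\infty$, are fine. The gaps are in the justification of $(\dagger)$, which is where all the content lives. First, the claimed two-sided ``Plancherel-type identity'' $\|E_{R_j}h_j\|_{L^2(w_{B_N})}\sim N^{3/2}\|h_j\|_{L^2(R_j)}$ is false in the direction you actually use: only $\|(E_{R_j}h_j)\eta_{B_N}\|_2\lesssim N^{3/2}\|h_j\|_2$ holds, and the reverse fails (take $h_j$ an $L^2$-normalized bump on a ball of radius $\tau\to 0$, or $h_j$ whose wave packets live far from $B_N$). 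So you cannot rewrite Theorem \ref{tfek1}, whose right-hand side is $\prod_j\|h_j\|_{L^2(R_j)}^{1/10}$, with $N^{-3/2}\prod_j\|E_{R_j}h_j\|_{L^2(w_{B_N})}^{1/10}$. The $L^5$ endpoint you want is nonetheless true: apply Theorem \ref{tfek4} directly to $f_j$ with $\widehat{f_j}=(E_{R_j}h_j)\eta_{B_N}$, which uses only the valid direction $\|f_j\|_2\lesssim\|E_{R_j}h_j\|_{L^2(w_{B_N})}$; this is exactly how the paper produces its local inequality.

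Second, and more seriously, $(\dagger)$ does not follow from black-box multilinear Riesz--Thorin applied to $(h_1,\dots,h_{10})\mapsto\prod_j E_{R_j}h_j$. At the nontrivial endpoint the target exponent is $p/10=1/2<1$, a quasi-Banach range where multilinear complex interpolation is not available in its standard form; moreover the right-hand norms are $\|E_{R_j}h_j\|_{L^2(w_{B_N})}$, i.e.\ norms of the outputs of linear operators, so one would be interpolating the product map restricted to the subspaces of extension functions, which again defeats the standard theorems. A direct attempt confirms the difficulty: writing $\int\prod_j|u_j|^{p/10}w\le\prod_j\|u_j\|_\infty^{\frac{p}{10}-\frac12}\int\prod_j|u_j|^{1/2}w$ and using the $L^5$ endpoint yields $\|u_j\|_{L^\infty}^{1-\frac5p}\|u_j\|_{L^2(w_{B_N})}^{\frac5p}$ per factor, which \emph{dominates} $\|u_j\|_{L^{2p/5}(w_{B_N})}$ by log-convexity of $L^q$ norms --- the wrong direction. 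This interpolation is precisely the step the paper outsources to \cite{BD3}, where it is carried out using the structure of the functions involved (Bernstein/reverse H\"older inequalities for functions with Fourier support in $\A_{R_j,\frac1N}$, together with a pigeonholing on the sizes of the cap norms). As written, $(\dagger)$ is asserted rather than proved; you need to import the \cite{BD3} argument (or an equivalent) at this point.
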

\begin{proof}
 Using the function $\eta_{B_N}$ introduced earlier, together with  Theorem \ref{tfek1} and Plancherel's identity we get the following local inequality
$$\|(\prod_{i=1}^{10} |E_{R_i}g_i|)^{1/10}\|_{L^{5}(w_{B_N})}\lesssim_{\nu,\epsilon} N^{-\frac32+\epsilon}(\prod_{i=1}^{10}\sum_{\atop{l(\Delta)=N^{-1/2}}}\|E_{\Delta}g_i\|_{L^{2}(w_{B_N})}^2)^{\frac1{20}}.$$
A  randomization argument further leads to the inequality
$$\|(\prod_{i=1}^{10}\sum_{\atop{l(\Delta)=N^{-1/2}}} |E_{\Delta}g_i|^2)^{1/20}\|_{L^{5}(w_{B_N})}\lesssim_{\nu,\epsilon} N^{-\frac32+\epsilon}(\prod_{i=1}^{10}\sum_{\atop{l(\Delta)=N^{-1/2}}}\|E_{\Delta}g_i\|_{L^{2}(w_{B_N})}^2)^{\frac1{20}}.$$
It now suffices to interpolate this with the trivial inequality
$$\|(\prod_{i=1}^{10}\sum_{\atop{l(\Delta)=N^{-1/2}}} |E_{\Delta}g_i|^2)^{1/{20}}\|_{L^{\infty}(w_{B_N})}\le (\prod_{i=1}^{10}\sum_{\atop{l(\Delta)=N^{-1/2}}}\|E_{\Delta}g_i\|_{L^{\infty}(w_{B_N})}^2)^{\frac1{20}}.$$
We refer the reader to \cite{BD3} for how this type of interpolation is performed.
\end{proof}
\bigskip

The proof of Theorem \ref{tfek4} will be done in two stages. First, we reduce it to a statement about plates, a multilinear Kakeya-type inequality. The second part of this section is then devoted to proving this inequality.

Our arguments are immediate adaptations of those in \cite{BCT} and \cite{Gu}.

\subsection{Reduction to a multilinear Kakeya-type inequality}
The argument in this section is essentially the one  from Section 2 of \cite{BCT}.

We first prepare the ground for the proof of Theorem \ref{tfek4}. Consider a finitely overlapping cover of  $\M_{S_j}$ with $\frac1{N^{1/2}}\times \frac1{N^{1/2}}-$caps. Consider also the associated finitely overlapping cover $\P_{j,\frac1N}$ of $\A_{S_j,\frac1N}$ with thick caps having dimensions roughly $\frac1{N^{1/2}}\times \frac1{N^{1/2}}\times \frac1N\times \frac1N\times \frac1N$.
Each function $\widehat{f_j}$ as in Theorem \ref{tfek4} has a wave-packet decomposition of the form
\begin{equation}
\label{fek9}
\widehat{f_j}=\sum_{T\in\Q_j}c_T\phi_T.
\end{equation}
The coefficients $c_T$ are arbitrary complex numbers. The collection $\Q_j$ consists of rectangular parallelepipeds, which we will refer to as {\em plates},  with dimensions $N^{1/2}\times N^{1/2}\times N\times N\times N$. The two sides with length $N^{1/2}$ span a plane which is a translation  of  the plane spanned by the vectors $(1,0,2x,0,y)$ and $(0,1,0,2y,x)$, where $(x,y,x^2,y^2,xy)$ is the center of one of the caps covering $\M_{S_j}$. Thus all $T$ corresponding to a cap are translates of each other, and in fact they tile $\R^5$.

The function $\phi_T$ is a smooth approximation of $1_T$, whose Fourier transform is supported in some $\theta\in \P_{j,\frac1N}$. Moreover, $\|\phi_T\|_2=|T|^{1/2}=N^2.$ The functions $\phi_T,\;T\in\Q_j,$ are almost orthogonal, so that
\begin{equation}
\label{fek12}
\|f_j\|_2\sim N^2(\sum_{T\in \Q_j}|c_T|^2)^{1/2}.
\end{equation}

Let $r_T(\omega)$ be a subset of the Rademacher sequence, indexed by $T$. If we use random functions $$\widehat{f_j}=\sum_{T\in\Q_j}r_T(\omega)c_T\phi_T$$
in Theorem \ref{tfek4}, we get that
\begin{equation}
\label{fek8}
\|\prod_{j=1}^{10}(\sum_{T\in\Q_j}|c_T|^21_T)^{\frac1{10}}\|_{L^{\frac{5}2}(B_N)}\lesssim_{\epsilon,\nu} N^{-1+\epsilon}\prod_{j=1}^{10}(\sum_{T\in\Q_j}|c_T|^2)^{\frac1{10}}.
\end{equation}
We will prove that this multilinear Kakeya-type inequality is true in the next subsection. For now, we will assume \eqref{fek8} is true, and  we will  show that it implies Theorem \ref{tfek4}.

To avoid unnecessary technicalities, we will ignore the Schwartz tails of $\phi_T$ and will write
\begin{equation}
\label{fek10}
\phi_T(x)\sim 1_T(x)e(\xi_T\cdot x),
\end{equation}
where $\xi_T$ is the center of the corresponding cap. To make the argument formal, one needs to work with mollifications, as in section 2 from \cite{BCT}. The details are left to the interested reader.

Let $\MLR_N$ be the smallest constant such that
$$\|(\prod_{j=1}^{10}\widehat{f}_j)^{\frac1{10}}\|_{L^5(B_N)}\le \MLR_N(\prod_{j=1}^{10}\|f_j\|_{L^2(\A_{S_j,\frac1N})})^{\frac1{10}}$$
holds for each $f_j$ and each $B_N$ as in Theorem \ref{tfek4}. Our goal is to prove that
\begin{equation}
\label{fek14}
\MLR_N\lesssim_{\epsilon,\nu} N^{\epsilon-\frac32}, \text{ for each }\epsilon>0.
\end{equation}
This will follow by iterating the following inequality that we will prove next
\begin{equation}
\label{fek13}
\MLR_N\le \Sigma_{\nu,\epsilon} N^{-\frac34+\epsilon}\MLR_{N^{\frac12}}, \text{ for each }\epsilon>0,\;N\ge 1.
\end{equation}
Indeed, assume for the moment that \eqref{fek13} holds. Let $l$ be the largest integer so that $N^{\frac1{2^l}}\ge 2$. Note that $l\le \log_2\log_2 N$. Fix $\epsilon>0$. By applying \eqref{fek13} $l$ times we get that
$$\MLR_N\lesssim_\nu (\Sigma_{\nu,\epsilon})^{l}N^{-\frac34-\frac38-\ldots-\frac{3}{2^{l+1}}+l\epsilon},$$
as the value of $\MLR_N$ for $2\le N\le 4$ depends only on $\nu$. It is easy to see now that \eqref{fek14} holds.
\bigskip

For the remainder of the subsection, we prove $\eqref{fek13}$. Denote by $\Xi_{j,N}$ the centers of the caps with diameter $\frac1N$, forming a finitely overlapping cover of $\M_{S_j}$.
We observe that, using $$\widehat{f_j}(x)=\sum_{\xi\in\Xi_{j,N}}c_\xi \eta_{B_N}(x)e(\xi\cdot x)$$
with $\eta_{B_N}$ as in \eqref{fek15}, we get
\begin{equation}
\label{fek11}
\int_{B_N}\prod_{j=1}^{10}|\sum_{\xi\in\Xi_{j,N}}c_\xi e(\xi\cdot x)|^{\frac12}\lesssim(\MLR_N)^5N^{\frac{25}2}\prod_{j=1}^{10}(\sum_{\xi\in \Xi_{j,N}}|c_\xi|^2)^{\frac14}.
\end{equation}
We will next use this inequality with $N$ replaced by $N^{1/2}$. Namely, take $f_j$ as in \eqref{fek9}. For a fixed $B_N$, let $\B$ be a finitely overlapping cover of $B_N$ with balls $B$ with radius $N^{1/2}$. Using the heuristics in \eqref{fek10} we can write
$$\|(\prod_{j=1}^{10}\widehat{f}_j)^{\frac1{10}}\|_{L^5(B_N)}^5\sim \int_{B_N}\prod_{j=1}^{10}|\sum_{T\in\Q_j(B_N)}c_T e(\xi_T\cdot x)|^{\frac12}\le$$$$\le\sum_{B\in\B}\int_{B}\prod_{j=1}^{10}|\sum_{T\in\Q_j(B)}c_T e(\xi_T\cdot x)|^{\frac12},$$
where $\Q_j(B)$ are those plates in $\Q_j$ that intersect $B$. Note that there are $O(1)$ such plates parallel to a given plate (in other words, associated with a given cap). Thus, using \eqref{fek11} at the smaller scale $N^{1/2}$  we can write
$$\int_{B}\prod_{j=1}^{10}|\sum_{T\in\Q_j(B)}c_T e(\xi_T\cdot x)|^{\frac12}\lesssim (\MLR_{N^{\frac12}})^5N^{\frac{25}4}\prod_{j=1}^{10}(\sum_{T\in\Q_j(B)}|c_T|^2)^{\frac14}.$$
Summing up we get
$$\sum_{B\in\B}\int_{B}\prod_{j=1}^{10}|\sum_{T\in\Q_j(B)}c_T e(\xi_T\cdot x)|^{\frac12}\lesssim (\MLR_{N^{\frac12}})^5N^{\frac{25}4}\sum_{B\in\B}\prod_{j=1}^{10}(\sum_{T\in\Q_j(B)}|c_T|^2)^{\frac14}\sim$$
$$\sim (\MLR_{N^{\frac12}})^5N^{\frac{25}4}N^{-\frac52}\sum_{B\in\B}\int_B\prod_{j=1}^{10}(\sum_{T\in\Q_j(B)}|c_T|^21_T)^{\frac14}\sim$$
$$\sim(\MLR_{N^{\frac12}})^5N^{\frac{15}4}\int_{B_N}\prod_{j=1}^{10}(\sum_{T\in\Q_j(B_N)}|c_T|^21_T)^{\frac14}.$$
By invoking \eqref{fek8} we can dominate the above by
$$\lesssim_{\epsilon,\nu}N^{\frac{5}2+\epsilon}(\MLR_{N^{\frac12}})^5N^{\frac{15}4}\prod_{j=1}^{10}(\sum_{T\in\Q_j}|c_T|^2)^{\frac1{4}}.$$
Now, using \eqref{fek12}, the above is
$$\lesssim_{\epsilon,\nu}N^{\frac{5}2+\frac{15}4-10+\epsilon}(\MLR_{N^{\frac12}})^5\prod_{j=1}^{10}\|f_j\|_2^{\frac12}=
N^{-\frac{15}4+\epsilon}(\MLR_{N^{\frac12}})^5\prod_{j=1}^{10}\|f_j\|_2^{\frac12}.$$
This proves \eqref{fek13}.

\subsection{The proof of the multilinear Kakeya-type inequality}
\bigskip

The goal of this subsection is to prove \eqref{fek8}.

Let $\P$ be the collection of all 3-planes (three dimensional affine spaces) $P$ in $\R^5$ whose orthogonal complement (of the translated linear space) is a plane spanned by $(1,0,2x,0,y)$ and $(0,1,0,2y,x)$, with $(x,y)\in[0,1]^2$. We will say that $P$ is associated with $(x,y)$.
\begin{definition}
We will say that ten families $\P_i$ of 3-planes in $\P$ are $\nu-$transverse if for each $P_i\in\P_i$ associated with $(x_i,y_i)$, $1\le i\le 10$, the sets $S_i=\{(x_i,y_i)\}$ are $\nu-$transverse in the sense of Definition \ref{dfek1}.
\end{definition}

Suppose $P_{j,a}$ are elements of $\P$, for $1\le j\le 10$ and $1\le a\le N_j$. We allow repetitions within a family, so it may happen that $P_{j,a}=P_{j,a'}$ for some $a\not=a'$.

For $W\ge 1$,  we will denote by $T_{j,a,W}$ the characteristic function of the $W-$neighborhood of $P_{j,a}$. For simplicity, we will denote by $T_{j,a}$ the value of $T_{j,a,1}$. We will abuse earlier terminology and will also call $T_{j,a,W}$ plates. The fact that we allow these plates to be infinitely long in three orthogonal directions will allow for more elegant arguments, and will produce superficially stronger results.

We reduce \eqref{fek8} to the following multilinear Kakeya-type inequality.
\begin{theorem}
\label{tfek5}
Assume the ten families $\P_j=\{P_{j,a}:1\le a\le N_j\}$ are $\nu-$transverse. Let $B_S$ be any ball with radius $S\ge 1$ in $\R^5$. Then for any $\epsilon>0$ there exists $C_{\epsilon,\nu}>0$ such that for any $S\ge 1$  we have
$$\int_{B_S}\prod_{j=1}^{10}(\sum_{a=1}^{N_j}T_{j,a})^{\frac14}\le C_{\epsilon,\nu} S^\epsilon\prod_{j=1}^{10}N_j^{\frac14}.$$
\end{theorem}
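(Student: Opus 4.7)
The strategy is to adapt the induction-on-scales argument of Bennett--Carbery--Tao \cite{BCT} (with the refinements of Guth \cite{Gu}), using Theorem \ref{tfek2} as the input Brascamp--Lieb estimate in place of the classical Loomis--Whitney inequality that powers the usual multilinear Kakeya. Concretely, let $K(R)$ denote the optimal constant such that the inequality of Theorem \ref{tfek5} holds on every ball of radius $R$; the goal is to prove an estimate of the form $K(2R)\le(1+o_\epsilon(1))K(R)$ so that iterating $\log_2 S$ times yields $K(S)\le C_{\epsilon,\nu}S^\epsilon$.

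\emph{Pointwise Brascamp--Lieb.} At each point $x\in\R^5$, the $\nu$-transversality of the families $\P_j$ provides via Proposition \ref{pfek1} a Brascamp--Lieb configuration of ten $2$-planes in $\R^5$ satisfying the dimension condition \eqref{fek3}, with a constant depending only on $\nu$. Theorem \ref{tfek2} then supplies a uniform constant $\Theta_\nu$, independent of the specific choice of $P_{j,a}\in\P_j$ through $x$, that controls the local Gaussian-weighted Kakeya integrand on a unit ball. This is the base case $K(1)\le C_\nu$ and is where transversality enters quantitatively.

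\emph{Induction step.} For the passage from scale $R$ to scale $2R$, cover $B_{2R}$ by a bounded-overlap family $\mathcal{B}$ of balls $B$ of radius $R$, and for each such $B$ set $\Q_j(B)=\{a:T_{j,a}\cap B\neq\emptyset\}$. Applying the inductive hypothesis inside each $B$,
$$\int_B\prod_{j=1}^{10}\Big(\sum_{a\in\Q_j(B)}T_{j,a}\Big)^{1/4}\le K(R)\prod_{j=1}^{10}|\Q_j(B)|^{1/4},$$
so summing over $B\in\mathcal{B}$ reduces matters to the discrete Brascamp--Lieb-type bound
$$\sum_{B\in\mathcal{B}}\prod_{j=1}^{10}|\Q_j(B)|^{1/4}\le (1+o(1))\prod_{j=1}^{10}N_j^{1/4},$$
which one proves by Gaussian-smoothing the ball indicators $\mathbf{1}_B$ and reapplying the pointwise Brascamp--Lieb inequality of the previous step. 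The key is that after smoothing, the constant produced is $\Theta_\nu$ times a geometric factor tending to $1$ as $R\to\infty$.

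\emph{Main obstacle.} The principal technical difficulty is that within a single family $j$, distinct plates $T_{j,a},T_{j,a'}$ are allowed to have distinct normal directions $V_{j,a}\neq V_{j,a'}$ — Definition \ref{dfek1} permits the associated parameters $(x_{j,a},y_{j,a})$ to vary freely within $[0,1]^2$ — so $\sum_a T_{j,a}$ does not factor through a single projection $\pi_j$ and one cannot invoke Brascamp--Lieb globally. One must apply it pointwise, and for this the constant in Theorem \ref{tfek2} must depend only on $\nu$, not on the specific directions; this is precisely the reason for the "uniform" formulation of that theorem. The remaining delicate bookkeeping is ensuring that the multiplicative factor in the induction step is close enough to $1$ that iteration over $\log S$ scales produces only an $S^\epsilon$ loss rather than a polynomial one, following \cite{Gu} essentially verbatim once the codimension-two plate geometry has been reduced to the Brascamp--Lieb setup of Section \ref{Bra}.
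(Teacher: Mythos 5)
You have the right ingredients in view (Theorem \ref{tfek2} as the Brascamp--Lieb input, Guth's scheme from \cite{Gu}), and you correctly identify the main obstacle: within a single family the plates $T_{j,a}$ need not be parallel, so $\sum_a T_{j,a}$ does not factor through one projection $\pi_j$ and Theorem \ref{tfek2} cannot be applied directly. But your proposed way around this --- ``apply Brascamp--Lieb pointwise'' and ``Gaussian-smooth the ball indicators'' so that the constant per step is $1+o(1)$ --- is not an argument. Brascamp--Lieb is a global integral inequality on $\R^5$; there is no pointwise version, and at a given point $x$ the integrand sees many plates from each family with different normal planes, so there is no single configuration $(V_1,\dots,V_{10})$ to which Proposition \ref{pfek1} applies. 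Moreover the induction structure you set up ($K(2R)\le(1+o(1))K(R)$ over $\log_2 S$ steps) is not what either \cite{BCT} or \cite{Gu} delivers: each step of those arguments loses a fixed constant $C_\nu>1$, not a factor tending to $1$, and your reduction to the discrete inequality $\sum_{B}\prod_j|\Q_j(B)|^{1/4}\lesssim\prod_j N_j^{1/4}$ is just the same multilinear Kakeya statement at the coarser scale, with the same non-parallelism problem; nothing has been gained.

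The missing ideas are: (a) use compactness of $\G(3,\R^5)$ to split each family into at most $N(\delta)$ subfamilies whose plates all make angle at most $\theta(\delta)$ with a fixed $P_j$ --- this is where the varying directions are tamed, at the cost of a harmless factor $N(\delta)^{10}$; (b) prove the exactly-parallel case (Corollary \ref{cfek3}) directly from Theorem \ref{tfek2} by taking $g_j=(\sum_a 1_{B(v_{j,a},W)})^{1/2}$; (c) run the induction on the plate thickness $W\mapsto\delta^{-1}W$ rather than on the radius of the ball: on each ball of radius $\sim\delta^{-1}W$ the nearly-parallel plates of width $W$ are contained in translates of the fixed plane $P_j$ thickened to width $2W$, so the parallel case applies locally, and a plate meeting such a ball covers it entirely once thickened to $\delta^{-1}W$; this yields $\int\prod_j f_{j,W}^{1/4}\le C_\nu\delta^{5}\int\prod_j f_{j,\delta^{-1}W}^{1/4}$ as in Lemma \ref{lfek4}. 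Iterating $M\sim\log_{\delta^{-1}}S$ times gives total loss $C_\nu^{M}=S^{\log C_\nu/\log\delta^{-1}}$, which is at most $S^{\epsilon}$ once $\delta$ is chosen small in terms of $\epsilon$ and $\nu$. The $S^{\epsilon}$ comes from a bounded loss per step combined with a large scale ratio per step, not from a loss tending to $1$ per doubling.
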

Let us see why this theorem implies \eqref{fek8}. The first observation is that, under the hypothesis of Theorem \ref{tfek5}, the following superficially stronger inequality holds true for all $c_{j,a}\in [0,\infty).$
$$\int_{B_S}\prod_{j=1}^{10}(\sum_{a=1}^{N_j}c_{j,a}T_{j,a})^{\frac14}\le C_{\epsilon,\nu} S^\epsilon\prod_{j=1}^{10}(\sum_{a}c_{j,a})^{\frac14}.$$
This is because we have allowed repetitions among plates. Consider this inequality with  $S=N^{1/2}$, and then rescale $x\mapsto N^{-1/2}x$ to get
$$\int_{B_N}\prod_{j=1}^{10}(\sum_{a=1}^{N_j}c_{j,a}T_{j,a,N^{\frac12}})^{\frac14}\le N^{-\frac52}C_{\epsilon,\nu} N^{\frac\epsilon2}\prod_{j=1}^{10}(\sum_{a}c_{j,a})^{\frac14}.$$
Finally, note that this is slightly stronger than \eqref{fek8}, since the transversality is preserved under rescaling and since the plates here are infinite in three orthogonal directions.
\bigskip

For the remainder of this subsection we will focus on proving Theorem \ref{tfek5}. Our proof is an adaptation of the argument from \cite{Gu}. We start with the following consequence of Theorem \ref{tfek2}, covering the case when the plates within each family are translates of each other.
\begin{corollary}
\label{cfek3}
Assume the ten families $\P_j=\{P_{j,a}:1\le a\le N_j\}$ are $\nu-$transverse and that all 3-planes within the family $\P_j$ are associated with the same $(x_j,y_j)$. Then
$$\int_{\R^5}\prod_{j=1}^{10}(\sum_{a=1}^{N_j}T_{j,a,W})^{\frac14}\lesssim_\nu W^{5} \prod_{j=1}^{10}N_j^{\frac14}.$$
\end{corollary}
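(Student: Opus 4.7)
The plan is to reduce the statement directly to the uniform Brascamp--Lieb inequality of Theorem \ref{tfek2}. Since every $P_{j,a}$ in the family $\P_j$ has the same orthogonal complement, namely the fixed two-dimensional linear subspace $V_j$ spanned by $(1,0,2x_j,0,y_j)$ and $(0,1,0,2y_j,x_j)$, the projection $\pi_j:\R^5\to V_j$ sends the affine 3-plane $P_{j,a}$ to a single point $v_{j,a}\in V_j$. The Euclidean distance from $x$ to $P_{j,a}$ equals $|\pi_j(x)-v_{j,a}|$, so the $W$-neighborhood satisfies
$$T_{j,a,W}(x)=\mathbf{1}_{B_{W}(v_{j,a})}(\pi_j(x))=(h_{j,a,W}\circ \pi_j)(x),$$
where $h_{j,a,W}$ is the indicator function of the two-dimensional disk of radius $W$ in $V_j$ centered at $v_{j,a}$. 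Therefore
$$\sum_{a=1}^{N_j}T_{j,a,W}=\tilde g_j\circ\pi_j,\qquad \tilde g_j:=\sum_{a=1}^{N_j}h_{j,a,W}.$$

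Next I would verify that Theorem \ref{tfek2} applies. By hypothesis the ten families $\P_j$ are $\nu$-transverse, which by definition means the ten associated points $(x_j,y_j)\in[0,1]^2$ satisfy \eqref{fek4} and \eqref{fek5}. Proposition \ref{pfek1} then guarantees that the ten planes $V_j$ satisfy the Brascamp--Lieb transversality requirement \eqref{fek3}. Hence Theorem \ref{tfek2} yields a constant $\Theta_\nu<\infty$ with
$$\int_{\R^5}\prod_{j=1}^{10}(g_j\circ\pi_j)^{1/2}\,dx\le \Theta_\nu^{5}\prod_{j=1}^{10}\|g_j\|_{L^2(V_j)}^{1/2}$$
for every nonnegative $g_j\in L^2(V_j)$ (obtained by raising the stated $L^5$ inequality to the fifth power).

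Finally I apply this with the choice $g_j:=\sqrt{\tilde g_j}$, which is nonnegative and in $L^2(V_j)$ since each $\tilde g_j$ is a bounded, compactly supported function. The left-hand side becomes exactly $\int_{\R^5}\prod_{j=1}^{10}(\sum_a T_{j,a,W})^{1/4}$, and the right-hand side is computed by
$$\|g_j\|_{L^2(V_j)}^{2}=\int_{V_j}\tilde g_j=\pi W^{2}N_j,$$
so $\|g_j\|_{L^2(V_j)}^{1/2}\lesssim W^{1/2}N_j^{1/4}$. Multiplying these ten estimates and absorbing the numerical constants into $\Theta_\nu$ yields the claimed bound $\lesssim_\nu W^{5}\prod_{j=1}^{10}N_j^{1/4}$. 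There is essentially no obstacle here: the entire content of the corollary is the translation of the combinatorial transversality in Definition \ref{dfek1} into the analytic transversality \eqref{fek3}, which has already been performed in Proposition \ref{pfek1}; once that bridge is crossed, the estimate follows by a direct substitution into Theorem \ref{tfek2}.
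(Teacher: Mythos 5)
Your argument is correct and is essentially identical to the paper's: both reduce the estimate to Theorem \ref{tfek2} by observing that $\sum_a T_{j,a,W}=\tilde g_j\circ\pi_j$ and applying the uniform Brascamp--Lieb inequality with $g_j=(\sum_a 1_{B(v_{j,a},W)})^{1/2}$, so that $\|g_j\|_{L^2(V_j)}\sim N_j^{1/2}W$ gives the stated bound. The only cosmetic difference is that you route the transversality verification through Proposition \ref{pfek1}, whereas Theorem \ref{tfek2} already takes \eqref{fek4} and \eqref{fek5} as its hypotheses, so that step is not needed.
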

\begin{proof}
Let $V_j$ be the plane spanned by $(1,0,2x_j,0,y_j)$ and $(0,1,0,2y_j,x_j)$. Each $P_{j,a}$ has the equation $\pi_j(x)=v_{j,a}$ for some $v_{j,a}\in V_j$.

Apply Theorem \ref{tfek2} to $V_j$, using
$$g_j=(\sum_{a=1}^{N_j}1_{B(v_{j,a},W)})^{1/2}.$$
It suffices to note that $$g_j\circ\pi_j=(\sum_{a=1}^{N_j}T_{j,a,W})^{1/2},$$
and that $$\|g_j\|_{L^2(V_j)}\sim N_j^{\frac12}W.$$
\end{proof}
\bigskip

Given some $1>\delta>0$, we will now assume that for each $j$ there is $P_j\in\P$ so that the "angle`` $d_{\G(3,\R^5)}(P_j,P_{j,a})$ between $P_j$ and each $P_{j,a}\in\P_j$ is very small. By that we mean that for each ball $B\subset\R^5$ with radius $\le \delta^{-1}W$ ($W\ge 1$) and each $P_{j,a}\in\P_j$, there exists a translation of $P_j$, call it $\tilde{P}_{j,a,B}$,  so that
\begin{equation}
\label{fek6}
T_{j,a,W}(x)\le \tilde{T}_{j,a,B,2W}(x), \text{ for all }x\in B.
\end{equation}
Here $\tilde{T}_{j,a,B, W}$ denotes the $W-$neighborhood of $\tilde{P}_{j,a,B}$. The existence of such a small angle $\theta(\delta)$ is a consequence of elementary geometry.

Define $f_{j,W}:=\sum_{a=1}^{N_j}T_{j,a,W}$.

\begin{lemma}
\label{lfek4}
Let $\delta,W,\P_j$ be as above.
Assume that the 3-planes $P_j\in\P$ are $\nu-$transverse and that
\begin{equation}
\label{fek7}
d_{\G(3,\R^5)}(P_j,P_{j,a})\le \theta(\delta),
\end{equation}
for each $P_{j,a}\in\P_j$. Then for each ball $B_S\subset \R^5$ with radius $S\ge \delta^{-1}W$ we have for some $C_\nu$ depending only on $\nu$,
$$\int_{B_S}\prod_{j=1}^{10}f_{j,W}^{\frac14}\le C_\nu \delta^{5}\int_{B_S} \prod_{j=1}^{10}f_{j,\delta^{-1}W}^{\frac14}.$$
\end{lemma}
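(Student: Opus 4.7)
The plan is to reduce the statement to a local inequality on balls of radius comparable to $\delta^{-1}W$ and then sum via finite overlap. Cover $B_S$ by a finitely overlapping family of balls $B$ of radius $\rho\delta^{-1}W$, where $\rho\in(0,1/3]$ is a small absolute constant chosen below. It suffices to prove, for each such $B$, the local inequality
$$\int_{B}\prod_{j=1}^{10}f_{j,W}^{1/4}\lesssim_\nu \delta^{5}\int_{B}\prod_{j=1}^{10}f_{j,\delta^{-1}W}^{1/4},$$
since summing yields the global statement (the slight enlargement of $B_S$ implicit in the finitely overlapping cover is harmless because $S\ge\delta^{-1}W$, and the resulting constant is absorbed into $C_\nu$).

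Fix such a ball $B$. By \eqref{fek6}, $f_{j,W}(x)\le\sum_{a}\tilde T_{j,a,B,2W}(x)$ for every $x\in B$. Within each family $j$, all the translated plates $\tilde T_{j,a,B,2W}$ are translates of the $2W$-neighborhood of the common reference $3$-plane $P_j$, and the $\nu$-transversality of the $P_j$ places us exactly within the hypothesis of Corollary \ref{cfek3}. Extending the domain of integration from $B$ to $\R^5$ and applying that corollary (with $W$ replaced by $2W$) gives
$$\int_{B}\prod_{j=1}^{10}f_{j,W}^{1/4}\le\int_{\R^5}\prod_{j=1}^{10}\Big(\sum_{a}\tilde T_{j,a,B,2W}\Big)^{1/4}\lesssim_\nu W^{5}\prod_{j=1}^{10}N_j(B)^{1/4},$$
where $N_j(B):=\#\{a:T_{j,a,W}\cap B\ne\emptyset\}$.

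To convert $N_j(B)$ back into an integral of $f_{j,\delta^{-1}W}$, observe that if $T_{j,a,W}\cap B\ne\emptyset$ then for every $x\in B$ the triangle inequality gives
$$\dist(x,P_{j,a})\le W+\operatorname{diam}(B)\le W+2\rho\delta^{-1}W\le \delta^{-1}W$$
for $\rho\le 1/3$, hence $T_{j,a,\delta^{-1}W}(x)=1$. Summing over $a$ produces the pointwise lower bound $f_{j,\delta^{-1}W}(x)\ge N_j(B)$ on $B$, so
$$\prod_{j=1}^{10}N_j(B)^{1/4}\le \frac{1}{|B|}\int_{B}\prod_{j=1}^{10}f_{j,\delta^{-1}W}^{1/4}.$$
Since $|B|\sim(\delta^{-1}W)^{5}$, the prefactor $W^{5}/|B|$ equals $\delta^{5}$ up to absolute constants, yielding the local inequality and hence the lemma. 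No serious obstacle arises: the only points requiring care are choosing $\rho$ small enough that both the hypothesis radius $\le\delta^{-1}W$ required by \eqref{fek6} and the distance chain above hold simultaneously, which is routine.
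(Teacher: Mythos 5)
Your argument is correct and is essentially the paper's own proof: cover $B_S$ by balls $B$ of radius a small absolute multiple of $\delta^{-1}W$, use \eqref{fek6} to replace the plates by translates of the reference planes $P_j$, apply Corollary \ref{cfek3} on each such $B$, and convert the plate count $N_j(B)$ back into $\delta^{5}\int_B\prod_j f_{j,\delta^{-1}W}^{1/4}$ via the observation that every counted plate has $T_{j,a,\delta^{-1}W}\equiv 1$ on $B$. The only cosmetic point is that before enlarging the domain of integration to $\R^5$ you should restrict the sum to those $\tilde T_{j,a,B,2W}$ that actually meet $B$ (and let $N_j(B)$ count exactly those), but this is the same implicit step taken in the paper and does not affect the argument.
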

\begin{proof}
We consider a finitely overlapping cover of $B_S$ with balls $B$ of radius $\frac1{100}\delta^{-1}W$. For each such $B$, it suffices to prove that
$$\int_{B}\prod_{j=1}^{10}f_{j,W}^{\frac14}\lesssim_\nu \delta^{5}\int_{B} \prod_{j=1}^{10}f_{j,\delta^{-1}W}^{\frac14}.$$
Due to \eqref{fek6} we have
$$\int_{B}\prod_{j=1}^{10}f_{j,W}^{\frac14}\le \int_{B}\prod_{j=1}^{10}(\sum_a \tilde{T}_{j,a,B,2W})^{\frac14}.$$
Let $N_j(B)$ be the number of plates $\tilde{T}_{j,a,B,2W}$ that intersect $B$. Invoking Corollary \ref{cfek3} we get
$$\int_{B}\prod_{j=1}^{10}(\sum_a \tilde{T}_{j,a,B,2W})^{\frac14}\lesssim_\nu W^{5} \prod_{j=1}^{10}N_j(B)^{\frac14}.$$

Since the diameter of $B$ is  $\frac1{100}\delta^{-1}W$, and using again \eqref{fek6}, if $\tilde{T}_{j,a,B, 2W}$  intersects $B$, then ${T}_{j,a,\delta^{-1}W}$ is identically 1 on $B$. Thus we can write
$$W^{5} \prod_{j=1}^{10}N_j(B)^{\frac14}\le \delta^5\int_B\prod_{j=1}^{10}(\sum_{a}T_{j,a,\delta^{-1}W})^{\frac14},$$
and this concludes the argument.
\end{proof}

Iterating Lemma \ref{lfek4} we obtain the following result.
\begin{proposition}
\label{pfek2}
Assume the ten families $\P_j=\{P_{j,a}:1\le a\le N_j\}$ satisfy the requirements of Lemma \ref{lfek4} for some fixed $\delta$, $\nu$ and $C_\nu$. Then for  each ball $B_S$ with radius $S\ge 1$ in $\R^5$ we have
$$\int_{B_S}\prod_{j=1}^{10}(\sum_{a=1}^{N_j}T_{j,a})^{\frac14}\le \delta^{-5}S^{\frac{\log C_\nu}{\log \delta^{-1}}}\prod_{j=1}^{10}N_j^{\frac14}.$$
\end{proposition}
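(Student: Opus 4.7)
The plan is a direct geometric-series iteration of Lemma \ref{lfek4} in the scale parameter $W$, starting from $W=1$ and increasing by a factor $\delta^{-1}$ at each step, until the plates' fattened width matches the size of the ball $B_S$ up to a factor of $\delta$. The hypotheses of Lemma \ref{lfek4} (namely $\nu$-transversality of the reference planes $P_j$ and the small-angle condition \eqref{fek7}) depend only on the families $\P_j$, not on $W$, so they persist unchanged throughout the iteration.

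Concretely, fix $k$ to be the largest integer with $\delta^{-k}\le S$, so $k\le \log S/\log\delta^{-1}$ and $\delta^{-k}\ge S\delta$. For $1\le i\le k$, the $i$-th application of Lemma \ref{lfek4} is at width $W=\delta^{-(i-1)}$ and requires the ball to have radius at least $\delta^{-1}W=\delta^{-i}$, which holds since $\delta^{-i}\le\delta^{-k}\le S$. Iterating gives
\begin{equation*}
\int_{B_S}\prod_{j=1}^{10} f_{j,1}^{1/4}\le (C_\nu\delta^{5})^{k}\int_{B_S}\prod_{j=1}^{10} f_{j,\delta^{-k}}^{1/4}.
\end{equation*}

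For the terminal integral I would use the trivial pointwise bound $f_{j,W}\le N_j$ together with $|B_S|\lesssim S^5$, yielding
\begin{equation*}
\int_{B_S}\prod_{j=1}^{10} f_{j,\delta^{-k}}^{1/4}\lesssim S^5\prod_{j=1}^{10}N_j^{1/4}.
\end{equation*}
Finally, I would combine the two estimates: from $k\le \log S/\log\delta^{-1}$ one gets $C_\nu^{k}\le S^{\log C_\nu/\log\delta^{-1}}$, and from $\delta^{-k}\ge S\delta$ one gets $\delta^{5k}\le \delta^{-5}S^{-5}$. Putting it all together, the $S^{\pm 5}$ factors cancel and what remains is $\delta^{-5}S^{\log C_\nu/\log\delta^{-1}}\prod_j N_j^{1/4}$, which is precisely the claimed bound. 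Since $f_{j,1}=\sum_a T_{j,a,1}=\sum_a T_{j,a}$, this is the desired inequality.

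The main obstacle is essentially just bookkeeping: one must verify at each step that the radius $S$ is large enough to legitimately invoke Lemma \ref{lfek4}, and one must carefully identify where the extra $\delta^{-5}$ prefactor in the conclusion comes from. This prefactor is the price of stopping the iteration at $W=\delta^{-k}$ rather than exactly $W=S$; equivalently, it arises because the integer $k$ may be strictly less than $\log S/\log\delta^{-1}$ by up to $1$, so $\delta^{-k}$ can be as small as $S\delta$. No further transversality manipulation is required beyond what is already built into Lemma \ref{lfek4}, so the proposition follows as a clean one-variable induction.
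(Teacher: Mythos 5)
Your proof is correct and takes essentially the same route as the paper: iterate Lemma \ref{lfek4} roughly $\log S/\log \delta^{-1}$ times in the width parameter and close with the trivial bound $f_{j,W}\le N_j$ integrated over a ball of volume $\lesssim S^5$. The only (harmless) difference is where the $\delta^{-5}$ prefactor arises --- the paper first covers $B_S$ by $\delta^{-5}$ balls whose radius is an exact power of $\delta^{-1}$ and iterates on each, whereas you iterate on $B_S$ itself and absorb the rounding $\delta^{-k}\ge \delta S$ into the terminal volume estimate.
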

\begin{proof}
Since each $B_S$ can be covered by $\delta^{-5}$ balls with radius $\delta^{-M}\le S$ and $M$ a positive integer, it suffices to show that
$$\int_{B_S}\prod_{j=1}^{10}(\sum_{a=1}^{N_j}T_{j,a})^{\frac14}\le S^{\frac{\log C_\nu}{\log \delta^{-1}}}\prod_{j=1}^{10}N_j^{\frac14}$$
for  $B_S$ with side length $S=\delta^{-M}$.

Iterating Lemma \ref{lfek4} we get
$$\int_{B_S}\prod_{j=1}^{10}(\sum_{a=1}^{N_j}T_{j,a})^{\frac14}=\int_{B_S}\prod_{j=1}^{10}f_{j,1}^{\frac14}\le C_\nu\delta^{5}\int_{B_S}\prod_{j=1}^{10}f_{j,\delta^{-1}}^{\frac14}\le$$
$$\le\ldots\le (C_\nu\delta^{5})^{M}\int_{B_S}\prod_{j=1}^{10}f_{j,\delta^{-M}}^{\frac14}\le (C_\nu\delta^{5})^{M}S^5\prod_{j=1}^{10}N_j^{\frac14}=S^{\frac{\log C_\nu}{\log \delta^{-1}}}\prod_{j=1}^{10}N_j^{\frac14}.$$
\end{proof}
\bigskip

We are now ready to prove Theorem \ref{tfek5}.
\medskip

\begin{proof}[of Theorem \ref{tfek5}]
Given $\epsilon>0$, choose $\delta>0$ small enough so that $\frac{\log C_\nu}{\log \delta^{-1}}<\epsilon$. Using the compactness of $\G(3,\R^5)$, there is a number $N(\delta)$ so that we can split each family $\P_j$ into at most $N(\delta)$ subfamilies each of which satisfies \eqref{fek7}, for some $P_j$ that depends on the subfamily. We find that $\int_{B_S}\prod_{j=1}^{10}(\sum_{a=1}^{N_j}T_{j,a})^{\frac14}$ is dominated by the sum of $O(N(\delta)^{10})$ terms of the form
$\int_{B_S}\prod_{j=1}^{10}(\sum_{a=1}^{M_j}T_{j,a})^{\frac14}$
with $M_j\le N_j$. Moreover, each term can be bounded using Proposition \ref{pfek2} by
$$\delta^{-5}S^{\frac{\log C_\nu}{\log \delta^{-1}}}\prod_{j=1}^{10}M_j^{\frac14}\le \delta^{-5}S^{\epsilon}\prod_{j=1}^{10}N_j^{\frac14}.$$
It is now clear that $C_{\epsilon,\nu}=\delta^{-5}N(\delta)^{10}$ works, since $\delta$ depends only on $\epsilon$ and $\nu.$
\end{proof}

\bigskip

\section{The geometric argument}
\label{Comb}
\bigskip

A $K-$square will be a  square in $[0,1]^2$ with side length $\frac1K$. The collection of all dyadic $K-$squares will be denoted by $\Col_K$. For a $K-$square $R$ in $\Col_K$, we will denote by $2R$ the $2K-$square with the same center as $R$.

The main result in this section is the following theorem, whose relevance will be clear in the proof of Proposition \ref{fp2}.
\begin{theorem}
\label{tfek3}
For each $K\ge 1$ and $\epsilon>0$, there exists $\nu_{K}>0$ and there exists $\Lambda_\epsilon>0$ depending on $\epsilon$ but not on $K$,  so that  each subcollection of $\Col_K$ with at least $\Lambda_\epsilon K^{1+\epsilon}$ squares contains ten $\nu_{K}-$transverse squares.
\end{theorem}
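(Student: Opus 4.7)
My plan is a direct counting (union bound) argument. Fix $\nu_K = c_0 K^{-A}$ for absolute constants $c_0 > 0$ and $A \ge 2$ to be chosen, small enough that: (a) $\nu_K$-transversality of the ten squares reduces, up to constants, to the same condition for their centers (since each square has side $1/K$); and (b) the $\nu_K$-neighborhood of any algebraic curve in $[0,1]^2$ of degree $O(1)$ intersects at most $O(K)$ dyadic $K$-squares, uniformly in the curve. The strategy is then to bound the number of $3$- and $5$-subsets of $\F$ violating (i) and (ii) respectively, and conclude by a union bound that some $10$-subset is $\nu_K$-transverse.

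For condition (i): given two distinct centers $(a_1,b_1), (a_2,b_2)$ at distance $L \gtrsim 1/K$, a short case analysis on which pair realises $\max_{a,b}|y_a-y_b|$ shows that a third center $(x,y)$ produces a triple violating (i) only if the triangle area is $< O(\nu_K / L)$, i.e.\ $(x,y)$ lies in an $O(\nu_K/L^2)$-strip around the line through $(a_1,b_1)$ and $(a_2,b_2)$. For $\nu_K \le c_0 K^{-2}$, this strip contains $O(K)$ dyadic $K$-squares regardless of $L$. Summing over the $\binom{|\F|}{2}$ pairs, the number of unordered bad triples is $O(|\F|^2 K)$.

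For condition (ii): given four centers, the fifth center is bad only if there exist orthonormal $u,v,w$ such that $Q_{u,v}$ and $Q_{u,w}$ both nearly vanish at all five centers. The coefficients of $Q_{u,v}$ are (up to fixed scalars) Plücker coordinates of $V = \mathrm{span}(u,v) \in \G(2,\R^5)$, so $\{Q_V = 0: V \in \G(2,\R^5)\}$ is an algebraic family of bounded complexity. The admissible $(V,W)$---sharing a common line, with $Q_V, Q_W$ nearly vanishing at the four fixed points---cut out a subvariety of bounded complexity in the $9$-dimensional space of orthonormal triples. The corresponding intersections $\{Q_V = Q_W = 0\}$ are, as noted after Definition \ref{dfek1}, either lines or finite sets of at most $4$ points. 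Consequently the bad fifth centers lie in the $\nu_K$-neighborhood of a bounded-degree, $1$-dimensional algebraic set in $\R^2$, which for $A$ large contains only $O(K)$ dyadic $K$-squares. The total number of unordered bad quintuples is therefore $O(|\F|^4 K)$.

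Applying a union bound to a uniformly random $10$-subset of $\F$,
$$\Pr[\text{bad 3- or 5-subset exists}] \le \binom{10}{3}\frac{O(|\F|^2 K)}{\binom{|\F|}{3}} + \binom{10}{5}\frac{O(|\F|^4 K)}{\binom{|\F|}{5}} \lesssim \frac{K}{|\F|} \le \frac{C_1}{\Lambda_\epsilon K^{\epsilon}},$$
which is $<1$ once $\Lambda_\epsilon > C_1$; hence some $10$-subset of $\F$ is $\nu_K$-transverse. The main obstacle I anticipate is the structural claim underpinning step (ii): confirming that over the admissible parameter variety, $\bigcup_{(V,W)}\{Q_V = Q_W = 0\}$ is a bounded union of $1$-dimensional algebraic sets in $\R^2$ (rather than sweeping out a $2$-dimensional set), uniformly for the possibly non-generic configurations of the four fixed centers that can arise from $\F$. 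The Plücker description of the conic family $Q_V$ and the fact that distinct $Q_{u,v}, Q_{u,w}$ meet in at most a line or four points are the key algebraic inputs; extra care may be needed to handle degenerate $4$-tuples, possibly by first removing a small exceptional set of centers from $\F$ (at the cost of a bit of the $K^\epsilon$ slack).
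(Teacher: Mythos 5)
Your counting argument for condition (i) has a genuine gap, and it is located exactly where the paper's multi-scale machinery does its work. Condition (i) quantifies over \emph{all} points of the three squares, so a triple $(R_1,R_2,R_3)$ is bad as soon as \emph{some} choice of points is nearly degenerate. For a pair $R_1,R_2$ at distance $L$, the union of all lines through a point of $R_1$ and a point of $R_2$ is a double cone of angular aperture $\sim 1/(KL)$, not a single line; the set of squares $R_3$ admitting three exactly collinear points therefore meets $\sim K/L$ dyadic $K$-squares, which is $\gg K$ when $L\ll 1$ and is $\sim K^2$ for adjacent squares. Your reduction (a) to centers fails for the same reason: perturbing the points by $1/K$ moves the line by an angle $\sim 1/(KL)$ and changes the quantity in \eqref{fek4} by roughly $1/K$, which dwarfs any $\nu_K=c_0K^{-A}$. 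Consequently the bound ``$O(|\F|^2K)$ bad triples'' is not justified, and for clustered subcollections $\F$ (e.g.\ $\F$ contained in a $\rho\times\rho$ square with $\rho\ll1$) the union bound does not close at the threshold $K^{1+\epsilon}$. This is precisely why the paper does not count bad triples globally: Lemma \ref{lfek1} only controls $\Reach_{lin}(R_1,R_2,R_3)$ under the hypothesis that $R_1,R_2$ are \emph{separated at the scale of the square $R_3$ containing them} (so that aperture times extent squared is $O(1/K)$), and Lemmas \ref{lfek2}--\ref{lfek3} run a greedy selection through a hierarchy of scales $K^{1/2^d}$ that enforces this separation; the $K^{1+\epsilon}$ threshold is the price of that pigeonholing. (A small additional point: a bad triple can also arise from all three points having nearly equal $y$-coordinates even when the triangle is fat — this is the horizontal strip in $\Reach_{lin}$ — and your strip-around-the-line description omits it, though that part is repairable.)

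For condition (ii) the obstacle you flag is real and unresolved in your write-up, but the paper shows it never needs to be confronted quantitatively: there is no counting of bad quintuples at all. Once five squares have the property that every sub-triple lies in $\Col_{lin}(K)$ (i.e.\ is separated from the relevant lines and horizontal strips), the quadratic condition \eqref{fek5} holds automatically with some $\nu_{quad}(K)>0$, because $\{Q_{u,v}=0\}\cap\{Q_{u,w}=0\}$ is a line or a set of at most four points, five such points cannot all lie on it, and a compactness argument (continuity of $Q_{u,v}$ in all variables, compactness of the orthonormal triples and of the configuration space $\Col_{quad}(K)$) upgrades strict positivity to a uniform lower bound. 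This soft argument sacrifices any explicit formula for $\nu_K$ — which the application does not require, since $\nu_K$ only needs to depend on $K$ — and completely sidesteps the question of whether the union of the sets $\{Q_{u,v}=Q_{u,w}=0\}$ over admissible orthonormal triples sweeps out a one- or two-dimensional set, which is the unproven core of your step (ii). To salvage your approach you would need both a scale-sensitive count of bad triples (replacing $O(|\F|^2K)$ by something like $\sum_{\text{pairs}}K/L$ and then handling clustered configurations separately, which essentially reinvents the paper's induction on scales) and a genuine proof of the algebraic claim in (ii); as written, neither is in place.
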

This will follow from a sequence of auxiliary results. Given three squares $R_1,R_2,R_3\subset [0,1]^2$ define
$$\Reach_{lin}(R_1,R_2,R_3)=$$
$$\{(x,y)\in R_3: (x,y),(x_1,y_1),(x_2,y_2)\text{ are collinear for some }(x_1,y_1)\in R_1,(x_2,y_2)\in R_2\}\cup$$
$$\cup\{(x,y)\in R_3: y=y_1 \text{ for some }(x_1,y_1)\in R_1\}.$$
\bigskip

\begin{lemma}
\label{lfek1}
There exists $C_1$ such that given any integer $d\ge 2$ and any two $K-$squares $R_1,R_2$ that sit inside a $K^{\frac1{2^d}}-$square $R_3$, and which do not sit inside any  $K^{\frac1{2^{d-1}}}-$square, the set
$\Reach_{lin}(R_1,R_2,R_3)$
intersects at most $C_1K$ squares from the collection
$$\{2R:\;R\in \Col_K\}.$$

\end{lemma}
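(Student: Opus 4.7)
The plan is to split $\Reach_{lin}(R_1,R_2,R_3)$ into the collinearity set $A$ and the horizontal strip
$$B=\{(x,y)\in R_3:\;y=y_1\text{ for some }(x_1,y_1)\in R_1\},$$
and bound the number of $2R$-squares each meets separately. The set $B$ is a horizontal strip of thickness $1/K$ inside the $K^{-1/2^d}\times K^{-1/2^d}$ square $R_3$, so it meets at most $O(K\cdot K^{-1/2^d})=O(K^{1-1/2^d})$ of the doubled $K$-squares, well below $C_1 K$.

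For $A$, write $(a_i,b_i)$ for the center of $R_i$. Since $R_1\cup R_2$ does not fit in any $K^{1/2^{d-1}}$-square, $\max(|a_2-a_1|,|b_2-b_1|)\gtrsim K^{-1/2^{d-1}}$, and after possibly interchanging the roles of the $x$ and $y$ axes I would assume $|a_2-a_1|=\max(|a_2-a_1|,|b_2-b_1|)\gtrsim K^{-1/2^{d-1}}$. Every line through $(x_1,y_1)\in R_1$ and $(x_2,y_2)\in R_2$ then has bounded slope $m$ lying in an interval of length $E\lesssim K^{-1}/|a_2-a_1|\lesssim K^{-1+1/2^{d-1}}$ around $m_0=(b_2-b_1)/(a_2-a_1)$. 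For a fixed abscissa $x_0$ in the $x$-range of $R_3$, the set of $y_0$ with $(x_0,y_0)\in A$ is the image of the map $(x_1,y_1,m)\mapsto y_1+m(x_0-x_1)$: varying $y_1$ contributes range $O(1/K)$, varying $x_1$ contributes $O(|m|/K)=O(1/K)$, and varying $m$ contributes $O(|x_0-x_1|\cdot E)=O(K^{-1/2^d}\cdot K^{-1+1/2^{d-1}})=O(K^{-1+1/2^d})$.

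Consequently each vertical slice of $A$ at $x=x_0$ is contained in an interval of length $O(K^{-1+1/2^d})$ and therefore meets at most $O(K^{1/2^d})$ of the $2R$-squares. Summing over the $O(K\cdot K^{-1/2^d})=O(K^{1-1/2^d})$ columns of $K$-squares spanning $R_3$ yields the desired total $O(K)$. The hypothesis that $R_3$ is a $K^{1/2^d}$-square while $R_1,R_2$ escape every $K^{1/2^{d-1}}$-square is used precisely so that (slope error)$\times$(width of $R_3$) comes out to $K^{-1+1/2^d}$, which balances against the number of columns to give exactly $K$. The main obstacle in making this rigorous is keeping the slope uniformly bounded when organizing the WLOG; the axis-swap case is handled identically by parametrizing horizontal slices of $A$ at fixed $y_0$ via the reciprocal slope, with the same numerology.
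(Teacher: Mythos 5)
Your proof is correct and is essentially the paper's argument: the paper bounds the area of $\Reach_{lin}(R_1,R_2,R_3)$ by viewing it as a width-$\frac1K$ strip plus a truncated double cone of aperture $O(K^{-1+\frac1{2^{d-1}}})$ and diameter $O(K^{-\frac1{2^d}})$, getting total area $O(\frac1K)$ and hence $O(K)$ doubled squares, which is the same numerology as your per-column slice length $O(K^{-1+\frac1{2^d}})$ times $O(K^{1-\frac1{2^d}})$ columns. Your column-by-column count is if anything slightly more careful than the paper's one-line area argument (which implicitly needs the $\frac1K$-neighborhood of the set to still have area $O(\frac1K)$); the only cosmetic point is that to count squares $R$ with $2R$ meeting $A$ you should let $x_0$ range over a $\frac1K$-window rather than a single abscissa, which changes nothing since the slope is bounded.
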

\begin{proof}
The proof  follows from very elementary geometry. The set $\Reach_{lin}(R_1,R_2,R_3)$ is the union of a horizontal strip of width $\frac1K$ and the part of a double cone with aperture $O(\frac{K^{\frac{1}{2^{d-1}}}}{K})$, both having diameter $O(K^{-\frac1{2^{d}}})$. The area of $\Reach_{lin}(R_1,R_2,R_3)$ is thus $O(\frac{1}K)$, and the conclusion follows.
\end{proof}
\bigskip

For each $K\ge 1$ let $\Col_{lin}(K)$  be the collection of all three-tuples $(R_1,R_2,R_3)$  with $R_1,R_2,R_3\in\Col_K$ and $2R_3\cap \Reach_{lin}(R_1,R_2,[0,1]^2)=\emptyset$. Define $\nu_{lin}(K)$ to be the infimum of
$$|(y-y_1)[(x-x_1)(y_2-y_1)-(x_2-x_1)(y-y_1)]\big|$$
taken over all points such that
$$(x_1,y_1)\in R_1,\;(x_2,y_2)\in R_2,\;(x,y)\in R_3,$$
with $(R_1,R_2,R_3)\in \Col_{lin}(K)$. By invoking a compactness argument, it is easy to see that
$\nu_{lin}(K)>0.$
\bigskip

For each $K\ge 1$ let $\Col_{quad}(K)$ be the collection of all five-tuples $(R_1,\dots,R_5)$ in $\Col_K$ so that given any $i,j,k\in\{1,2,3,4,5\}$, there is a permutation $\pi:\{i,j,k\}\to\{i,j,k\}$ such that $(R_{\pi(i)}, R_{\pi(j)},R_{\pi(k)})\in\Col_{lin}(K)$.
Recall the definition of $Q_{u,v}$ from Section \ref{Trans}. Let
$$\nu_{quad}(K)=\inf_{u,v,w}\max_j(|Q_{u,v}(x_j,y_j)|+|Q_{u,w}(x_j,y_j)|)$$
where the infimum is taken over all orthonormal triples $u,v,w$ in $\R^5$, and the maximum is taken over all points $(x_j,y_j)\in R_j$
with $(R_1,\ldots,R_5)\in \Col_{quad}(K)$. As observed earlier, the intersection of the zero sets of $Q_{u,v}$ and $Q_{u,w}$ is either a line, or a finite set with at most four points.
By invoking a compactness argument and the continuity of $Q_{u,v}(x,y)$ in $u,v,x,y$, it is easy to see that
$\nu_{quad}(K)>0.$

\bigskip

Define now
$$\nu_K=\min\{\nu_{lin}(K),\nu_{quad}(K)\}.$$Let $C_2$ be a large enough constant, independent of $K$ ($10^{10}$ probably works).
\begin{lemma}
\label{lfek2}
 If there is a $K^{1/4}-$square $R$ containing at least $C_2C_1K$ squares from $\Col_K$, then among these squares we can find ten which are $\nu_K-$transverse.
\end{lemma}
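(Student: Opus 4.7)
The strategy is a greedy selection argument, choosing $R_1,\ldots,R_{10}$ from the given collection of at least $C_2C_1K$ squares inside $R$ one at a time. At step $m+1$, having already chosen $R_1,\ldots,R_m$, I select $R_{m+1}$ so as to satisfy two conditions: (a) for every $i\le m$, the pair $R_i,R_{m+1}$ does not sit inside any common $K^{1/2}$-square; and (b) for every pair $i<j\le m$, $2R_{m+1}\cap \Reach_{lin}(R_i,R_j,[0,1]^2)=\emptyset$.

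Condition (a) ensures that for every pair $(R_i,R_j)$ with $i<j\le m$ the hypothesis of Lemma \ref{lfek1} is satisfied at scale $d=2$: both squares lie in the $K^{1/4}$-square $R$ but not in any common $K^{1/2}$-square. The set of squares violating (a) with respect to a given $R_i$ is contained in the $O(K^{-1/2})$-neighborhood of $R_i$ and meets at most $O(K)$ squares from $\Col_K$; summing over $i\le m$ yields $O(mK)$ exclusions from (a). For each already chosen pair $(R_i,R_j)$, Lemma \ref{lfek1}---applied with third argument a constant enlargement of $R$ that contains every relevant $2R_{m+1}$---produces at most $C_1K$ squares violating (b), so condition (b) rules out at most $\binom{m}{2}C_1K$ candidates. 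The total number of excluded candidates at any step $m+1\le 10$ is therefore $O(C_1K)$ with an absolute implicit constant, which is much smaller than $C_2C_1K$ provided $C_2$ is taken large enough (e.g.\ $C_2=10^{10}$), so the greedy procedure always succeeds.

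It remains to verify that the ten chosen squares are $\nu_K$-transverse in the sense of Definition \ref{dfek1}. For any triple $\{R_i,R_j,R_k\}$ with $k$ its largest index, condition (b) at step $k$ places $(R_i,R_j,R_k)\in\Col_{lin}(K)$, so by the definition of $\nu_{lin}(K)$ some permutation witnesses \eqref{fek4} with $\nu\ge \nu_{lin}(K)\ge \nu_K$; this gives requirement (i). Since every triple among the ten squares thus admits a permutation in $\Col_{lin}(K)$, every quintuple lies in $\Col_{quad}(K)$ by definition, and requirement (ii) with constant $\ge \nu_{quad}(K)\ge \nu_K$ follows at once. The main subtlety is condition (a): Lemma \ref{lfek1} is only available for $d\ge 2$, so we must actively forbid pairs of squares sitting inside a common $K^{1/2}$-square; without this the reach bound $C_1K$ would break down. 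Condition (a) handles this at the cost of only $O(mK)$ extra exclusions, comfortably within the $C_2C_1K$ budget.
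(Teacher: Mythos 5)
Your proof is correct and follows essentially the same route as the paper: a greedy selection of ten squares, where at each step you forbid (a) candidates sharing a $K^{1/2}$-square with a previously chosen square (costing $O(K)$ exclusions per chosen square) and (b) candidates whose doubles meet $\Reach_{lin}(R_i,R_j,\cdot)$ for a previously chosen pair (costing $O(C_1K)$ exclusions per pair, by Lemma \ref{lfek1} with $d=2$), which is exactly the paper's conditions (ii) and (iii). Your explicit verification that the selected triples land in $\Col_{lin}(K)$ and hence the quintuples in $\Col_{quad}(K)$, yielding both requirements of Definition \ref{dfek1} with constant $\nu_K$, is the same bookkeeping the paper leaves implicit in its definition of $\nu_K$.
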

\begin{proof}
The selection is inductive. Start with any square $R_1\in \Col_K$. Assume we have selected $m-1\le 9$ squares $R_1,\ldots, R_{m-1}$ which are $\nu_K-$transverse. We select the next square $R_{m}\in\Col_K$ subject to the following restrictions

(i) $R_m\subset R$

(ii) For each $1\le i\le m-1$, $R_m$ and $R_i$ do not sit inside a  $K^{1/2}-$square

(iii) $2R_m\cap\Reach_{lin}(R_i,R_j,R)=\emptyset$, for each $1\le i\not= j\le m-1$

\medskip

Note that (ii) forbids the selection of  $O(K)$ squares.

Now, Lemma \eqref{lfek1} with $d=2$ shows that among the squares satisfying  (i), the requirements  (ii) and (iii) are  satisfied for all but  $O(C_1K)$ squares. The conclusion follows if $C_2$ is large enough.
\end{proof}

An immediate consequence is the proof of Theorem \ref{tfek3} when $\epsilon=\frac12$.
\begin{corollary}
\label{cfek1}
Any subset of $\Col_K$ with at least $C_2C_1K^{1+\frac12}$ squares contains ten which are $\nu_K-$transverse.
\end{corollary}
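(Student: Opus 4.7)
The proof will be a direct pigeonhole argument combined with Lemma \ref{lfek2}. The plan is to partition $[0,1]^2$ into dyadic $K^{1/4}$-squares and observe that there are only $(K^{1/4})^2 = K^{1/2}$ such squares. If our given subcollection $\mathcal{R} \subset \Col_K$ has cardinality at least $C_2 C_1 K^{3/2}$, then by pigeonhole at least one dyadic $K^{1/4}$-square $R$ contains at least
$$\frac{C_2 C_1 K^{3/2}}{K^{1/2}} = C_2 C_1 K$$
squares from $\mathcal{R}$. Since these squares from $\Col_K$ sit inside $R$, Lemma \ref{lfek2} applies verbatim and produces ten squares from this collection which are $\nu_K$-transverse.

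The only mild subtlety is making sure the pigeonhole partition is compatible with the hypothesis of Lemma \ref{lfek2}, which requires the $K^{1/4}$-square $R$ to literally \emph{contain} the selected $K$-squares. Since we work throughout with dyadic squares (as stipulated at the start of the paper, right after Theorem \ref{tfek6}), every dyadic $K$-square is contained in a unique dyadic $K^{1/4}$-square, so this causes no issue. Essentially no step in this argument is an obstacle; the combinatorial content is entirely carried by Lemmas \ref{lfek1} and \ref{lfek2}, and what remains is just a scale-counting check. One sanity check: a $K^{1/4}$-square has area $K^{-1/2}$ and each $K$-square has area $K^{-2}$, so a $K^{1/4}$-square contains at most $K^{3/2}$ dyadic $K$-squares, which is consistent with the $C_2 C_1 K$ lower bound we need. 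The corollary then follows immediately, and the exponent $1 + 1/2$ in the conclusion corresponds exactly to the loss incurred by the pigeonhole step over the linear bound in Lemma \ref{lfek2}.
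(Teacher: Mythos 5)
Your proof is correct and is precisely the argument the paper gives: pigeonhole over the $K^{1/2}$ dyadic $K^{1/4}$-squares to find one containing at least $C_2C_1K$ squares of the subcollection, then invoke Lemma \ref{lfek2}. Nothing further is needed.
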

\begin{proof}
The hypothesis implies that there is a $K^{1/4}-$square that contains at least $C_2C_1K$ squares from $\Col_K$, so Lemma \ref{lfek2} applies.
\end{proof}
We repeat the above reasoning  as follows.
\begin{lemma}
\label{lfek3}
Let $d\ge 2$.
If there is a $K^{1/2^d}-$square $R$ containing at least $100^{d-2}C_2C_1K$ squares from $\Col_K$, then among these squares we can find ten which are $\nu_K-$transverse.
\end{lemma}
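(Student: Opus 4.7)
The proof proceeds by induction on $d$, the base case $d = 2$ being exactly Lemma \ref{lfek2}. For the inductive step with $d \ge 3$, the plan is to partition $R$ into its $K^{2/2^d}$ dyadic $K^{1/2^{d-1}}$-sub-squares and perform a dichotomy at scale $d-1$. If some sub-square $R' \subset R$ contains at least $100^{d-3} C_2 C_1 K$ squares from $\Col_K$, then the inductive hypothesis applied to $R'$ at level $d-1$ directly delivers ten $\nu_K$-transverse squares. Otherwise every sub-square of $R$ contains fewer than $100^{d-3} C_2 C_1 K$ squares, and I must argue directly within $R$.

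In this second case I carry out an inductive selection in the spirit of Lemma \ref{lfek2}. Having chosen $R_1, \ldots, R_{m-1} \in \Col_K$ with $m \le 10$, I pick the next $R_m \in \Col_K$ subject to three requirements: (i) $R_m \subset R$; (ii) the dyadic $K^{1/2^{d-1}}$-sub-square of $R$ containing $R_m$ is at $L^\infty$ grid distance at least $2$ from the sub-square containing each $R_i$, for $i < m$; and (iii) $2R_m \cap \Reach_{lin}(R_i, R_j, R) = \emptyset$ for every pair $i \neq j$ in $\{1, \ldots, m-1\}$. Requirement (ii) is stronger than merely demanding distinct sub-squares: it ensures that the (half-open) sub-squares of $R_m$ and $R_i$ are separated by $L^\infty$ distance strictly exceeding $K^{-1/2^{d-1}}$, hence $R_m$ and $R_i$ cannot both sit inside a common axis-aligned $K^{1/2^{d-1}}$-square. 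This is exactly the hypothesis of Lemma \ref{lfek1} at level $d$, which then bounds the exclusions imposed by (iii) by $O(C_1 K)$ per pair.

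The bookkeeping balances because (ii) forbids at most $9$ sub-squares per previously chosen $R_i$ (the sub-square of $R_i$ together with its eight grid neighbors), each containing fewer than $100^{d-3} C_2 C_1 K$ squares by the dichotomy assumption; summing over $i < m \le 10$ yields at most $81 \cdot 100^{d-3} C_2 C_1 K$ forbidden squares, while (iii) adds a further $O(\binom{9}{2} C_1 K) = O(C_1 K)$. Against the available $\ge 100 \cdot 100^{d-3} C_2 C_1 K$ squares in $R$, the selection succeeds at each step for $C_2$ a sufficiently large absolute constant independent of $d$ and $K$. The delicate point, which I view as the main obstacle, is that the factor $100$ in the exponential hypothesis $100^{d-2} C_2 C_1 K$ is precisely what dominates the $81 = 9 \cdot 9$ loss from the neighborhood-avoiding condition (ii)—any slower growth rate would break the balance across recursive layers. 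Once ten squares are selected, (iii) certifies that every triple sits in $\Col_{lin}(K)$, which in turn places every five-tuple in $\Col_{quad}(K)$, so the definitions of $\nu_{lin}(K)$, $\nu_{quad}(K)$, and $\nu_K$ complete the verification that the chosen ten-tuple is $\nu_K$-transverse in the sense of Definition \ref{dfek1}.
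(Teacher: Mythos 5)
Your proof is correct and follows essentially the same route as the paper: induction on $d$ with the dichotomy on whether some $K^{1/2^{d-1}}$-sub-square is rich, followed by the same greedy selection subject to containment, separation at scale $K^{1/2^{d-1}}$, and reach-avoidance. Your grid-distance-$2$ formulation of condition (ii) is just a concrete implementation of the paper's ``do not sit inside a $K^{1/2^{d-1}}$-square,'' and the counting balances identically.
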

\begin{proof}
The proof is by induction on $d$. We have already seen the case $d=2$. Assume we have verified the lemma for some $d-1\ge 2$. Consider a collection satisfying the hypothesis.

We distinguish two cases. First, if there is a smaller $K^{1/2^{d-1}}-$square $R'$ containing at least $100^{d-3}C_2C_1K$ squares from $\Col_K$, the conclusion follows from our induction hypothesis.

We can thus assume that each $K^{1/2^{d-1}}-$square contains at most $100^{d-3}C_2C_1K$ squares from $\Col_K$.
The selection of the ten squares is inductive, essentially identical to the one from Lemma \ref{lfek2}. Start with any square $R_1$. Assume we have selected $m-1\le 9$ squares $R_1,\ldots, R_{m-1}$ which are $\nu_K-$transverse. We select the next square $R_{m}\in\Col_K$ subject to the following restrictions

(i) $R_m\subset R$

(ii) for each $1\le i\le m-1$,  $R_m$ and $R_i$ do not sit inside a $K^{1/2^{d-1}}-$square

(iii) $2R_m\cap\Reach_{lin}(R_i,R_j,R)=\emptyset$, for each $1\le i\not= j\le m-1$.

Note that due to our assumption, (ii) forbids the selection of  $O(100^{d-3}C_2C_1K)$ squares.

Now, Lemma \ref{lfek1}  shows that among the squares satisfying  (i), the requirements  (ii) and (iii) are  satisfied for all but  $O(C_1K)$ squares. The conclusion now follows since the original collection contains sufficiently many square, in particular
$$O(C_1K)+O(100^{d-3}C_2C_1K)<100^{d-2}C_2C_1K.$$
\end{proof}
\begin{corollary}
\label{cfek2}
Any subset of $\Col_K$ with at least $100^{d-1}C_2C_1K^{1+\frac1{2^d}}$ squares contains ten which are $\nu_K-$transverse.
\end{corollary}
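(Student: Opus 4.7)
The plan is to reduce Corollary \ref{cfek2} directly to Lemma \ref{lfek3} via a single pigeonhole step, in complete analogy with Corollary \ref{cfek1}, which corresponds to the case $d=1$ (there pigeonhole at the $K^{1/4}$-scale invokes Lemma \ref{lfek2}, i.e.\ Lemma \ref{lfek3} at level $d=2$).

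First, I would partition $[0,1]^2$ into dyadic $K^{1/2^{d+1}}$-squares. Since each such square has side length $K^{-1/2^{d+1}}$, there are in total
\[
\bigl(K^{1/2^{d+1}}\bigr)^2 = K^{1/2^d}
\]
of them. Given a subcollection of $\Col_K$ of size at least $100^{d-1}C_2 C_1 K^{1+1/2^d}$, the pigeonhole principle then forces some $K^{1/2^{d+1}}$-square $R$ to contain at least
\[
\frac{100^{d-1}C_2C_1\, K^{1+1/2^d}}{K^{1/2^d}} \;=\; 100^{d-1}C_2 C_1 K
\]
members of the subcollection.

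Next, I would apply Lemma \ref{lfek3} with its parameter $d$ replaced by $d+1$: that lemma requires a $K^{1/2^{d+1}}$-square containing at least $100^{(d+1)-2}C_2 C_1 K = 100^{d-1}C_2 C_1 K$ squares from $\Col_K$, which is exactly what the pigeonhole step supplies. The conclusion of Lemma \ref{lfek3} then produces ten $\nu_K$-transverse squares inside $R$, which automatically lie in the original subcollection.

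I anticipate no real obstacle here: the entire argument is a bookkeeping check that descending from the $K^{1/2^d}$-scale (which appears in the statement) to the $K^{1/2^{d+1}}$-scale (where pigeonhole is performed) converts the exponent $1+1/2^d$ of $K$ and the constant $100^{d-1}$ in the hypothesis precisely into the input needed by Lemma \ref{lfek3} at level $d+1$. The genuine combinatorial work has already been absorbed into Lemma \ref{lfek3} and the definition of $\nu_K$.
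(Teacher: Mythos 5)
Your proof is correct and is essentially identical to the paper's: both pigeonhole over the $K^{1/2^{d+1}}$-squares (of which there are $K^{1/2^d}$) to find one containing at least $100^{d-1}C_2C_1K$ squares of the subcollection, and then apply Lemma \ref{lfek3} with parameter $d+1$. The bookkeeping of constants and exponents in your write-up matches exactly.
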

\begin{proof}
The hypothesis implies that there is a $K^{\frac1{2^{d+1}}}-$square that contains at least $100^{d-1}C_2C_1K^{1}$ squares from $\Col_K$, so Lemma \ref{lfek3} applies.

\end{proof}

The proof of Theorem \ref{tfek3} is now immediate. For each $\epsilon>0$, let $d$ be the largest integer such that $\epsilon\le \frac1{2^d}$. Define now $\Lambda_\epsilon=100^{d-1}C_2C_1$.

\bigskip

\section{Linear versus $10-$linear decoupling}

In the remaining part of the paper, we will follow the approach from \cite{BD5}.

First, we recall the following ``trivial" decoupling from \cite{BD5}, that we will use  to bound the non transverse contribution in the Bourgain--Guth decomposition. For completeness, we reproduce the proof from \cite{BD5}.
\begin{lemma}
\label{fl1}
Let $R_1,\ldots,R_M$ be pairwise disjoint squares in $[0,1]^2$ with side length $K^{-1}$. Then for each $2\le p\le \infty$
$$ \|\sum_iE_{R_i}g\|_{L^p(w_{B_K})}\lesssim_p M^{1-\frac2p}(\sum_i\|E_{R_i}g\|_{L^p(w_{B_K})}^p)^{1/p}.$$
\end{lemma}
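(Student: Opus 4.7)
The plan is to prove the lemma by establishing the $L^2$ and $L^\infty$ endpoint estimates separately and then interpolating between them. The $p=\infty$ case is immediate from the triangle inequality: pointwise $|\sum_i E_{R_i}g(x)|\le M\max_i|E_{R_i}g(x)|$ gives $\|\sum_i E_{R_i}g\|_{L^\infty(w_{B_K})}\le M\max_i\|E_{R_i}g\|_{L^\infty(w_{B_K})}$, which matches the claimed bound at $p=\infty$ with constant $M=M^{1-2/\infty}$.

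For the $p=2$ case I would rely on approximate Fourier orthogonality. The caps $\M_{R_i}\subset\M$ are separated at scale $K^{-1}$ in the $(s,t)$-parameters because the $R_i$ are pairwise disjoint $K^{-1}$-squares. Hence, after multiplying by a bump whose Fourier transform is supported in $B(0,cK^{-1})$, the resulting $E_{R_i}g$-pieces have finitely overlapping Fourier supports and Plancherel yields the desired $L^2$-orthogonality. To reconcile this with the polynomial weight $w_{B_K}$, I would use a physical-space partition of unity at scale $K$: take a positive Schwartz $\psi$ with $\widehat\psi$ compactly supported, set $\psi_l(x)=\psi((x-l)/K)$ for $l\in K\Z^5$, and arrange $\sum_l|\psi_l|^2\sim 1$. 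Since $w_{B_K}$ varies slowly at scale $K$, one has $\sum_l w_{B_K}(l)|\psi_l|^2\sim w_{B_K}$ pointwise, so applying Plancherel-based orthogonality inside each $\psi_l$-window and summing over $l$ produces
$$
\Bigl\|\sum_i E_{R_i}g\Bigr\|_{L^2(w_{B_K})}^2\lesssim\sum_i\|E_{R_i}g\|_{L^2(w_{B_K})}^2,
$$
which is the $p=2$ bound (with constant $M^0=1$).

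To conclude, I view $T\colon (f_i)_{i=1}^M\mapsto\sum_i f_i$ as a linear map on the product measure space $\{1,\dots,M\}\times\R^5$ endowed with counting measure times $w_{B_K}\,dx$, so that the mixed-norm $\ell^p_i(L^p(w_{B_K}))$ is a genuine $L^p$-space on this product. The two endpoint inequalities just proved read $\|T\|_{L^2\to L^2}\lesssim 1$ and $\|T\|_{L^\infty\to L^\infty}\le M$, and Riesz--Thorin interpolation then produces $\|T\|_{L^p\to L^p}\lesssim M^{1-2/p}$ for each $p\in[2,\infty]$. Specializing to $(f_i)=(E_{R_i}g)$ yields the lemma.

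The only mildly technical point is the $p=2$ estimate: the polynomial weight $w_{B_K}$ must be absorbed into a Plancherel-type inequality, since naive orthogonality is a statement on $\R^5$ with Lebesgue measure. The partition-of-unity device above does exactly this, at the cost of replacing orthogonality at a single scale $K$ by orthogonality localized to a family of $K$-balls and then summing. Beyond that routine reduction, the argument is entirely soft and no real obstacle appears.
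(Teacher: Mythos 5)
Your overall strategy (an $L^2$ almost-orthogonality bound, a trivial $L^\infty$ bound, and interpolation) is the same as the paper's, and your treatment of the weight $w_{B_K}$ at $p=2$ via a partition of unity at scale $K$ is fine. But the final interpolation step has a genuine gap. Riesz--Thorin interpolates \emph{operator norms} on the full endpoint spaces, and the map $T\colon (f_i)\mapsto \sum_i f_i$ does \emph{not} satisfy $\|T\|_{\ell^2(L^2)\to L^2}\lesssim 1$ on the whole product space: taking $f_1=\dots=f_M=f$ gives $\|\sum_i f_i\|_2 = M\|f\|_2 = M^{1/2}(\sum_i\|f_i\|_2^2)^{1/2}$, so the true operator norm at $p=2$ is $M^{1/2}$. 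The constant-$1$ bound you proved is only valid on the subspace of tuples whose Fourier supports are the prescribed finitely-overlapping neighborhoods of the caps $\M_{R_i}$; an inequality for specific vectors (or on a subspace) is not something Riesz--Thorin can interpolate. Interpolating the honest operator norms $M^{1/2}$ and $M$ only yields $M^{1-1/p}$, i.e.\ the trivial Cauchy--Schwarz bound \eqref{fe3008}-type loss, not the claimed $M^{1-2/p}$.

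The fix is exactly the device in the paper's proof: replace $T$ by $(f_i)\mapsto \sum_i T_i(f_i)$, where $T_i$ is a smooth Fourier multiplier equal to $1$ on the support of $\widehat{\eta_{B_K}E_{R_i}g}$ and adapted to a slight dilate of it. This modified operator is defined on \emph{all} tuples, satisfies $\|\sum_i T_i f_i\|_2\lesssim(\sum_i\|f_i\|_2^2)^{1/2}$ by Plancherel and the bounded overlap of the multiplier supports, and $\|\sum_i T_i f_i\|_\infty\lesssim M\max_i\|f_i\|_\infty$ since each $T_i$ is uniformly bounded on $L^\infty$. Now Riesz--Thorin legitimately gives $M^{1-\frac2p}$ for the modified operator, and specializing to $f_i=\eta_{B_K}E_{R_i}g$ (for which $T_i f_i=f_i$) recovers the lemma. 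Without inserting these projections, the step ``the two endpoint inequalities just proved read $\|T\|_{L^2\to L^2}\lesssim1$'' is false as a statement about the operator you are interpolating.
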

\begin{proof}
The key observation is the fact that if $f_1,\ldots,f_M:\R^5\to\C$ are such that $\widehat{f_i}$ is supported on a ball $B_i$ and the dilated balls $(2B_i)_{i=1}^M$ are pairwise disjoint, then
\begin{equation}
\label{fe36}
\|f_1+\ldots+f_M\|_{L^p(\R^5)}\lesssim_p  M^{1-\frac2p}(\sum_i\|f_i\|_{L^p(\R^5)}^p)^{\frac1p}.
\end{equation}
In fact more is true. If $T_i$ is a smooth Fourier multiplier adapted to $2B_i$ and equal to 1 on $B_i$, then the inequality
$$\|T_1(f_1)+\ldots+T_M(f_M)\|_{L^p(\R^5)}\lesssim_p  M^{1-\frac2p}(\sum_i\|f_i\|_{L^p(\R^5)}^p)^{\frac1p}$$
for arbitrary $f_i\in L^p(\R^5)$
follows by interpolating the immediate $L^2$ and $L^\infty$ estimates. Inequality \eqref{fe36} is the best one can say in general, if no further assumption is made on the Fourier supports of $f_i$. Indeed, if $\widehat{f_i}=1_{B_i}$ with $B_i$ equidistant balls of radius one with collinear centers, then the reverse inequality will hold.

Let now $\eta_{B_K}$ be as in \eqref{fek15}. It suffices to note that the Fourier supports of the  functions $f_i=\eta_{B_K}E_{R_i}g$ have bounded overlap.
 \end{proof}
\bigskip

For $2\le p<\infty$ and $N\ge 1$,  recall that $D(N,p)$ is the smallest constant such that the decoupling
$$\|E_{[0,1]^2}g\|_{L^p(w_{B_N})}\le D(N,p)(\sum_{l(\Delta)=N^{-1/2}}\|E_{\Delta}g\|_{L^p(w_{B_N})}^p)^{1/p}$$
holds true for all $g$ and all balls $B_N$ or radius $N$.

We now introduce a $10-$linear version of $D(N,p) $. Given also $\nu\ll 1$, let $D_{multi}(N,p,\nu)$ be the smallest constant such that the inequality
$$\||\prod_{i=1}^{10}E_{R_i}g_j|^{\frac1{10}}\|_{L^p(w_{B_N})}\le D_{multi}(N,p,\nu)(\prod_{i=1}^{10}\sum_{l(\Delta)=N^{-1/2}}\|E_{\Delta}g_i\|_{L^p(w_{B_N})}^p)^{\frac1{10p}}$$
holds true for all $\nu$-transverse squares (see Definition \ref{dfek1}) $R_1,\ldots,R_{10}\subset [0,1]^{2}$ with equal, but otherwise arbitrary side lengths, all $g_i:R_i\to\C$  and all balls $B_N\subset\R^5$ with radius $N$.
\bigskip

H\"older's inequality shows that $D_{multi}(N,p,\nu)\le D(N,p)$. The rest of the section will be devoted to proving that the reverse inequality is also essentially true. This will  follow from a  variant of the Bourgain--Guth induction on scales in \cite{BG}. More precisely, we prove the following result. Recall the definition of $\nu_{K}$ from Theorem \ref{tfek3}.

\begin{theorem}
\label{ft2}
For each $K\ge 2$, $\epsilon>0$ and $p\ge 2$ there exists $\Lambda_{K,p,\epsilon}>0$ and $\beta(K,p,\epsilon)>0$  with
$$\lim_{K\to \infty}\beta(K,p,\epsilon)=0,\;\;\text{ for each }p,\epsilon,$$
such that for each $N\ge K$

$$D(N,p)\le N^{\beta(K,p,\epsilon)+(1+\epsilon)(\frac12-\frac1p)}+$$
 \begin{equation}
\label{fe31}+\Lambda_{K,p,\epsilon}\log_K N\max_{1\le M\le N}\left[(\frac{M}{N})^{(1+\epsilon)(\frac1p-\frac12)}D_{multi}(M,p,\nu_{K})\right].
\end{equation}
\end{theorem}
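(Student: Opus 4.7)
The plan is to run a Bourgain--Guth style induction on scales at an intermediate scale $K$, turning the linear decoupling at scale $N$ into a combination of a linear decoupling at the smaller scale $N/K^2$ and a $10$-linear contribution at scale $N$. First, I would cover $B_N$ by finitely overlapping balls $B_K$ of radius $K$ and partition $[0,1]^2$ into dyadic $K^{-1}$-squares $\tau$. Each $E_\tau g$ is essentially constant on any $B_K$ by the uncertainty principle, so for $x\in B_K$ one defines the set of \emph{significant caps} $S(x)=\{\tau:|E_\tau g(x)|\ge K^{-C}\max_\sigma |E_\sigma g(x)|\}$ and obtains, up to a tail negligible for $C$ large, $|E_{[0,1]^2}g(x)|\lesssim |\sum_{\tau\in S(x)} E_\tau g(x)|$. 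The crucial dichotomy is then: either (a) $|S(x)|\le \Lambda_\epsilon K^{1+\epsilon}$ (few significant caps), or (b) $|S(x)|> \Lambda_\epsilon K^{1+\epsilon}$, in which case Theorem \ref{tfek3} supplies ten $\nu_K$-transverse squares $\tau_1,\ldots,\tau_{10}\in S(x)$ that are all simultaneously significant, so $|E_{[0,1]^2}g(x)|\lesssim K^{O(1)}\prod_{j=1}^{10}|E_{\tau_j}g(x)|^{1/10}$.

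Converting this pointwise dichotomy into an $L^p(w_{B_K})$-estimate by applying Lemma \ref{fl1} to the case (a) contribution with $M=\Lambda_\epsilon K^{1+\epsilon}$, and then summing over balls $B_K$ tiling $B_N$ via the weight subordination $\sum_{B_K} w_{B_K}\lesssim w_{B_N}$, yields
$$\|E_{[0,1]^2}g\|_{L^p(w_{B_N})}\lesssim \Lambda_\epsilon^{1-\frac{2}{p}} K^{(1+\epsilon)(1-\frac{2}{p})}\Bigl(\sum_\tau \|E_\tau g\|_{L^p(w_{B_N})}^p\Bigr)^{1/p}+ K^{O(1)}\max_{10\text{-transv.}}\Bigl\|\prod_{j=1}^{10}|E_{\tau_j}g|^{\frac{1}{10}}\Bigr\|_{L^p(w_{B_N})}.$$
The linear (first) term is then handled by parabolic rescaling: each $K^{-1}$-cap $\tau$ rescales to $[0,1]^2$, and $B_N$ rescales to a box whose smallest (quadratic) extent is $N/K^2$, so that $\|E_\tau g\|_{L^p(w_{B_N})}\le D(N/K^2,p)\bigl(\sum_{\Delta\subset\tau,\,l(\Delta)=N^{-1/2}}\|E_\Delta g\|_{L^p(w_{B_N})}^p\bigr)^{1/p}$. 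The $10$-linear (second) term is handled by invoking the definition of $D_{multi}(N,p,\nu_K)$ at the finest scale $N^{-1/2}$, after which AM--GM collapses the product back to a single sum over $\Delta$. This produces the one-step recursion
$$D(N,p)\le \Lambda_\epsilon^{1-\frac{2}{p}} K^{(1+\epsilon)(1-\frac{2}{p})}D(N/K^2,p)+C_K\, D_{multi}(N,p,\nu_K).$$

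Finally, setting $\beta(K,p,\epsilon):=\tfrac{1}{2}(1-\tfrac{2}{p})\log_K\Lambda_\epsilon$, so that $\Lambda_\epsilon^{1-2/p}=K^{2\beta}$ and $\beta\to 0$ as $K\to\infty$, I would iterate the recursion $L=\lfloor\tfrac{1}{2}\log_K N\rfloor$ times, stopping at a bounded scale where $D(\cdot,p)=O(1)$. The accumulated linear factor is $(K^{2\beta+(1+\epsilon)(1-2/p)})^L\lesssim N^{\beta+(1+\epsilon)(1/2-1/p)}$, supplying the first term of the claimed inequality. The telescoping multilinear contributions at intermediate scales $M_k=N/K^{2k}$ produce a sum of at most $L=O(\log_K N)$ terms, each of the form $(N/M_k)^{\beta+(1+\epsilon)(1/2-1/p)}D_{multi}(M_k,p,\nu_K)$, which is dominated by $\Lambda_{K,p,\epsilon}\log_K N$ times $\max_M[(M/N)^{(1+\epsilon)(1/p-1/2)}D_{multi}(M,p,\nu_K)]$ after folding the remaining $(N/M)^\beta$ into the $K$-dependent constant. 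The principal technical obstacle lies in Steps 1--2: carefully justifying the passage from the pointwise Bourgain--Guth dichotomy to an $L^p$-level inequality with the sharp Lemma \ref{fl1}-gain $M^{1-2/p}$ (rather than the cruder $M^{1-1/p}$ gain from a direct Cauchy--Schwarz) despite the $x$-dependence of $S(x)$; this is addressed by pigeonholing over the finitely many relevant ``types'' of significant configurations, a maneuver by now standard in the Bourgain--Guth framework.
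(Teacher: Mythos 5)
Your proposal is correct and follows essentially the same route as the paper: the Bourgain--Guth dichotomy at scale $K$ (few significant caps handled by Lemma \ref{fl1} with the $M^{1-2/p}$ gain, many caps handled by Theorem \ref{tfek3} and the definition of $D_{multi}$), followed by parabolic rescaling and iteration down to scale $N^{1/2}$; the paper packages the one-step inequality as Propositions \ref{fp2}--\ref{fp3} and iterates the rescaled inequality rather than a numerical recursion on $D(N,p)$, but this is only a cosmetic difference. The residual $(N/M)^{\beta}$ factors you propose to ``fold into the constant'' are handled in the paper at the same level of precision (they are harmless for the application since $\beta(K,p,\epsilon)\to 0$), so no genuine gap remains.
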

Recall that due to \eqref{fe30} we have
$D(N,p)\gtrsim N^{\frac12-\frac1p}$ for $p\ge 2$.
We conclude that the term $N^{\beta(K,p,\epsilon)+(1+\epsilon)(\frac12-\frac1p)}$ in \eqref{fe31} is rather harmless.
\bigskip

The key step in proving Theorem \ref{ft2}  is the following inequality.

\begin{proposition}
\label{fp2}
For $2\le p<\infty$ and each $\epsilon>0$ there is a constant $C_{p,\epsilon}$  so that for each $g$ and $N\ge K\ge 1$ we have
$$\|E_{[0,1]^2}g\|_{L^p(w_{B_N})}^p\le $$$$ C_{p,\epsilon}K^{(p-2)(1+\epsilon)}\sum_{R\in\Col_K}\|E_{R}g\|_{L^p(w_{B_N})}^p+C_{p,\epsilon}K^{100p}D_{multi}(N,p,\nu_{K})^p\sum_{\Delta\in
\Col_{N^{\frac12}}}\|E_{\Delta}g\|_{L^p(w_{B_N})}^p.$$
\end{proposition}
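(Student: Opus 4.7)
The plan is to adapt the Bourgain--Guth induction on scales from \cite{BG} and \cite{BD5}. Cover $B_N$ with a finitely overlapping family $\mathcal{B}$ of balls of radius $K$; since $\sum_{B_K\in\mathcal{B}}w_{B_K}\lesssim w_{B_N}$, it suffices to establish the local analogue of the proposition on each $B_K$ and sum. Fix $B_K\in\mathcal{B}$ and set $c^*(B_K)=\max_{R\in\Col_K}\sup_{B_K}|E_Rg|$. Declare a square $R\in\Col_K$ \emph{significant} if $\sup_{B_K}|E_Rg|\ge c^*(B_K)/K^{100}$, and let $\mathcal{S}(B_K)$ denote this collection. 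Each $E_Rg$ has Fourier support of diameter $O(1/K)$, so it is essentially constant at scale $K$; in particular the ``low'' squares $R\notin\mathcal{S}(B_K)$ contribute at most $K^2\cdot c^*(B_K)/K^{100}$ pointwise on $B_K$, which is absorbed into the significant contribution. Now apply the Bourgain--Guth dichotomy based on the size of $\mathcal{S}(B_K)$, with threshold $\Lambda_\epsilon K^{1+\epsilon}$ supplied by Theorem \ref{tfek3}.

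In the non-transverse case $|\mathcal{S}(B_K)|<\Lambda_\epsilon K^{1+\epsilon}$, we use $|E_{[0,1]^2}g|\lesssim|\sum_{R\in\mathcal{S}(B_K)}E_Rg|$ on $B_K$ and invoke Lemma \ref{fl1} with $M=|\mathcal{S}(B_K)|$ to obtain
$$\|E_{[0,1]^2}g\|_{L^p(w_{B_K})}^p\lesssim |\mathcal{S}(B_K)|^{p-2}\sum_{R\in\Col_K}\|E_Rg\|_{L^p(w_{B_K})}^p\le (\Lambda_\epsilon K^{1+\epsilon})^{p-2}\sum_R\|E_Rg\|_{L^p(w_{B_K})}^p.$$
Summing over $B_K\in\mathcal{B}$ produces the first term of the proposition.

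In the transverse case $|\mathcal{S}(B_K)|\ge\Lambda_\epsilon K^{1+\epsilon}$, Theorem \ref{tfek3} extracts ten $\nu_K$-transverse squares $R_1,\ldots,R_{10}\in\mathcal{S}(B_K)$ with $|E_{R_i}g(x)|\gtrsim c^*(B_K)/K^{100}$ for $x\in B_K$, which gives the pointwise bound
$$|E_{[0,1]^2}g(x)|^p\le (K^2 c^*(B_K))^p\lesssim K^{O(p)}\prod_{i=1}^{10}|E_{R_i}g(x)|^{p/10}.$$
Since there are at most $K^{20}$ candidate 10-tuples, bounding the pointwise max over 10-tuples by the sum and exchanging the order of summation over $B_K$ and 10-tuples (together with $\sum_{B_K}w_{B_K}\lesssim w_{B_N}$) dominates the transverse contribution by $K^{O(p)}\max_{\text{10-tuples}}\|\prod_i|E_{R_i}g|^{1/10}\|_{L^p(w_{B_N})}^p$. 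Invoking the definition of $D_{multi}(N,p,\nu_K)$ at scale $N$ and then using AM-GM together with $\sum_{\Delta\subset R_i}\|E_\Delta g\|_{L^p(w_{B_N})}^p\le\sum_{\Delta\in\Col_{N^{1/2}}}\|E_\Delta g\|_{L^p(w_{B_N})}^p$ to symmetrize the product of the ten factors yields the second term. The main obstacle is making the locally constant heuristic rigorous with weight bookkeeping across scales $K$ and $N$: one must work with slight enlargements of the balls $B_K$ and track how weights transition, but this is technical yet routine, and I would refer to \cite{BG} and \cite{BD5} for the standard treatment rather than reproducing it here.
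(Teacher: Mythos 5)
Your argument is the same Bourgain--Guth dichotomy the paper runs: on each $B_K$ you threshold against the maximal cap, use Theorem \ref{tfek3} and the $K^{O(p)}$ pointwise multilinearization in the broad case, and Lemma \ref{fl1} with $M\le\Lambda_\epsilon K^{1+\epsilon}$ in the narrow case, then sum over $B_K$ and replace the max over $10$-tuples by the sum before invoking $D_{multi}(N,p,\nu_K)$. One small repair: the inequality $|E_{[0,1]^2}g|\lesssim|\sum_{R\in\mathcal{S}(B_K)}E_Rg|$ is not literally valid, since the significant sum can cancel while the insignificant squares still contribute up to $K^2c^*(B_K)/K^{100}$; you should instead keep the extra single term $2|E_{R^*}g|$ (as the paper does), which is harmlessly absorbed into the $\ell^p$ sum over $R\in\Col_K$.
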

\bigskip

The exponent $100p$ in $K^{100p}$ is not important and could easily be improved, but the exponent $p-2$ in $K^{p-2}$ is sharp and will play a critical role in the rest of the argument.
\bigskip

\begin{proof}
Following the standard formalism from \cite{BG}, we may assume that $|E_{R}g(x)|$ is essentially constant on each ball $B_K$ of radius $K$, and will we denote by $|E_{R}g(B_K)|$ this value. Write
$$E_{[0,1]^2}g(B_K)=\sum_{R\in\Col_K}E_{R}g(B_K).$$
Fix $B_K$. Let $R^*\in \Col_K$ be a square which maximizes the value of $|E_{R}g(B_K)|$. Let $\Col_{B_K}^{*}$ be those squares  $R\in \Col_K$ such that
$$|E_{R}g(B_K)|\ge K^{-2}|E_{R^*}g(B_K)|.$$

We distinguish two cases.

First, if $\Col_{B_K}^{*}$ contains at least $\Lambda_\epsilon K^{1+\epsilon}$ squares, then invoking Theorem \ref{tfek3} we infer that $\Col_{B_K}^{*}$ contains ten $\nu_{K}-$transverse squares $R_1,\ldots,R_{10}$. In this case we can write
$$|E_{[0,1]^2}g(B_K)|\le K^4(\prod_{i=1}^{10}|E_{R_i}g(B_K)|)^{\frac1{10}}.$$

Otherwise, if $\Col_{B_K}^{*}$ contains at most $\Lambda_\epsilon K^{1+\epsilon}$ squares, we can write using the triangle inequality
$$|E_{[0,1]^2}g(B_K)|\le 2|E_{R^*}g(B_K)|+|\sum_{R\in\Col_{B_K}^{*}}E_{R}g(B_K)|.$$
Next, invoking Lemma \ref{fl1} we get
$$\|E_{[0,1]^2}g\|_{L^p(w_{B_K})}\lesssim_p\|E_{R^*}g\|_{L^p(w_{B_K})}+ (\Lambda_\epsilon K^{1+\epsilon})^{1-\frac2{p}}(\sum_{R\in\Col_{B_K}^{*}}\|E_{R}g\|_{L^p(w_{B_K})}^p)^{1/p}\le $$
$$\lesssim_{p,\epsilon}  K^{(1+\epsilon)(1-\frac2{p})}(\sum_{R\in\Col_{K}}\|E_{R}g\|_{L^p(w_{B_K})}^p)^{1/p}.$$
To summarize, in either case we can write
$$\|E_{[0,1]^2}g\|_{L^p(w_{B_K})}\lesssim_{p,\epsilon} $$$$ K^4\max_{R_1,\ldots,R_{10}:\;\nu_{K}-\text{transverse}}\|(\prod_{i=1}^{10}|E_{R_i}g|)^{1/10}\|_{L^p(w_{B_K})}+ K^{(1+\epsilon)(1-\frac2{p})}(\sum_{R\in\Col_K}\|E_{R}g\|_{L^p(w_{B_K})}^p)^{1/p}\le$$
$$K^4(\sum_{R_1,\ldots,R_{10}:\;\nu_{K}-\text{transverse}}\|(\prod_{i=1}^{10}|E_{R_i}g|)^{1/10}\|_{L^p(w_{B_K})}^p)^{1/p}+ K^{(1+\epsilon)(1-\frac2{p})}(\sum_{R\in\Col_K}\|E_{R}g\|_{L^p(w_{B_K})}^p)^{1/p}.$$
Raising to the power $p$  and summing over $B_K$ in a  finitely overlapping cover of $B_N$, leads to the desired conclusion.
\end{proof}
\bigskip

\bigskip

Using a form of parabolic rescaling,  the result in Proposition \ref{fp2} leads to the following general result.
\begin{proposition}
\label{fp3}
Let $R\subset[0,1]^2$ be a square with side length $\delta$. Then for each $\epsilon>0$ and each  $2\le p<\infty$, $g:R\to\C$, $K\ge 1$ and $N>\delta^{-2}$ we have
$$\|E_{R}g\|_{L^p(w_{B_N})}^p\le $$$$ C_{p,\epsilon}K^{(1+\epsilon)(p-2)}\sum_{R'\subset R\atop{R'\in\Col_{\frac{K}\delta}}}\|E_{R'}g\|_{L^p(w_{B_N})}^p+C_{p,\epsilon}K^{100p}D_{multi}(N\delta^{2},p,\nu_{K})^p\sum_{\Delta\subset R\atop{\Delta\in\Col_{N^{\frac12}}}}\|E_{\Delta}g\|_{L^p(w_{B_N})}^p,$$
where $C_{p,\epsilon}$ is the constant from Proposition \ref{fp2}.
\end{proposition}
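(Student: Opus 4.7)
The plan is to reduce the statement for a square $R=[a,a+\delta]\times[b,b+\delta]$ to the already proven Proposition \ref{fp2} at scale $M=N\delta^2\ge 1$ via an affine parabolic rescaling. Concretely, I would substitute $s=a+\delta u$, $t=b+\delta v$ in the defining integral of $E_Rg$ and, after expanding the phase, collect the coefficients of $u$, $v$, $u^2$, $v^2$, $uv$. This identifies an affine change of variables $L:\R^5\to\R^5$ of the form
$$L(x)=\bigl(\delta(x_1+2ax_3+bx_5),\,\delta(x_2+2bx_4+ax_5),\,\delta^2 x_3,\,\delta^2 x_4,\,\delta^2 x_5\bigr)$$
under which $|E_Rg(x)|=\delta^2|E_{[0,1]^2}\tilde g(L(x))|$, where $\tilde g(u,v)=g(a+\delta u,b+\delta v)$. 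Since $\det L=\delta^8$, a change of variables gives $\|E_Rg\|_{L^p(w_{B_N})}^p=\delta^{2p-8}\int |E_{[0,1]^2}\tilde g(y)|^p w_{B_N}(L^{-1}(y))\,dy$.

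Next, I would cover the ellipsoid $L(B_N)$ (which has semi-axes $\sim\delta N$ in two directions and $\sim\delta^2N$ in the other three) by a finitely overlapping family $\B$ of balls $B_M$ of radius $M:=\delta^2 N\ge 1$. A standard computation analogous to \eqref{fek20} gives $w_{B_N}\circ L^{-1}(y)\lesssim\sum_{B_M\in\B}w_{B_M}(y)$, so
$$\|E_Rg\|_{L^p(w_{B_N})}^p\lesssim \delta^{2p-8}\sum_{B_M\in\B}\|E_{[0,1]^2}\tilde g\|_{L^p(w_{B_M})}^p.$$
Applying Proposition \ref{fp2} to $\tilde g$ at scale $M$ on each such $B_M$ yields a contribution from squares in $\Col_K$ (with constant $K^{(1+\epsilon)(p-2)}$) and one from squares in $\Col_{M^{1/2}}=\Col_{\delta N^{1/2}}$ (with constant $K^{100p}D_{multi}(M,p,\nu_K)^p$).

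To finish, I would translate these two sums back to the $x$ variables. A square $R'\in\Col_K$ in the $(u,v)$ plane corresponds under $(u,v)\mapsto(a+\delta u,b+\delta v)$ to a square of side $\delta/K$ contained in $R$, i.e. an element of $\Col_{K/\delta}$ lying in $R$; similarly $\Delta\in\Col_{\delta N^{1/2}}$ corresponds to a square of side $N^{-1/2}$ in $R$, i.e. an element of $\Col_{N^{1/2}}$. The identity $|E_Rg|=\delta^2|E_{[0,1]^2}\tilde g\circ L|$ applied to each subsquare (with its own affine rescaling) converts each $\|E_{R'}\tilde g\|_{L^p(w_{B_M})}^p$ and $\|E_{\Delta}\tilde g\|_{L^p(w_{B_M})}^p$ into $\delta^{8-2p}\|E_{R'}g\|_{L^p(w_{L^{-1}(B_M)})}^p$ and $\delta^{8-2p}\|E_{\Delta}g\|_{L^p(w_{L^{-1}(B_M)})}^p$ respectively; summing over $B_M\in\B$ and using the weight domination in reverse folds these back into $w_{B_N}$. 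The Jacobian factors $\delta^{2p-8}$ and $\delta^{8-2p}$ cancel exactly, giving the desired inequality.

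The only genuine point to verify carefully is that the multilinear constant that arises is $D_{multi}(M,p,\nu_K)=D_{multi}(N\delta^2,p,\nu_K)$; this follows because Proposition \ref{fp2} is being applied at scale $M$ to the rescaled function $\tilde g$ on $[0,1]^2$, where the $\nu_K$-transversality threshold is intrinsic to $K$ and is preserved by our affine rescaling (the rescaling acts on the frequency side and does not alter the $(u,v)$-geometry in which transversality is defined). The remaining book-keeping — the pullback of $w_{B_N}$, the cover of $L(B_N)$, and the correspondence between $\Col_K$, $\Col_{M^{1/2}}$ on the $(u,v)$ side and $\Col_{K/\delta}$, $\Col_{N^{1/2}}$ on the $(s,t)$ side — is routine and contributes only admissible implicit constants that are absorbed into $C_{p,\epsilon}$.
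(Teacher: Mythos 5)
Your proposal is correct and follows essentially the same route as the paper: the same affine change of variables $s=a+\delta u$, $t=b+\delta v$ inducing the shear-plus-dilation $L$ on the $x$-side (with the identical formulas for $\bar x_1,\dots,\bar x_5$), a finitely overlapping cover of $L(B_N)$ by balls of radius $N\delta^2$, an application of Proposition \ref{fp2} at scale $N\delta^2$ on each such ball, and rescaling back with the Jacobian factors cancelling. The only cosmetic difference is that you spell out the weight pullback and the correspondence $\Col_K\leftrightarrow\Col_{K/\delta}$, $\Col_{(N\delta^2)^{1/2}}\leftrightarrow\Col_{N^{1/2}}$ explicitly, which the paper leaves as "rescale back."
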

\begin{proof}
Assume $R=[a,a+\delta]\times [b,b+\delta]$. The affine change of variables
$$(t,s)\in R\mapsto(t',s')=\eta(t,s)=(\frac{t-a}{\delta},\frac{s-b}{\delta})\in[0,1]^2$$ shows that
$$|E_Rg(x)|=\delta^2|E_{[0,1]^2}g^{a,b}(\bar{x})|,$$
$$|E_{R'}g(x)|=\delta^2|E_{R''}g^{a,b}(\bar{x})|,$$
where
$R''=\eta(R')$ is a square with side length $\frac1K$,
$$g^{a,b}(t',s')=g(\delta t'+a,\delta s'+b),$$
and the relation between $x=(x_1,\ldots,x_5)$ and $\bar{x}=(\bar{x}_1,\ldots,\bar{x}_5)$ is given by
$$\bar{x}_1=\delta(x_1+2ax_3+bx_5),$$
$$\bar{x}_2=\delta(x_2+2bx_4+ax_5),$$
$$\bar{x}_3=\delta^2x_3,\;\;\bar{x}_4=\delta^2x_4,\;\;\bar{x}_5=\delta^2x_5. $$
Note that $\bar{x}$ is the image of $x$ under a shear transformation. Call $C_N$ the image of the ball $B_N$ in $\R^5$ under this transformation. Cover $C_N$ with a family $\F$ of balls $B_{\delta^2N}$ with $O(1)$ overlap. Write
$$\|E_{R}g\|_{L^p(w_{B_N})}=\delta^{2-\frac8p}\|E_{[0,1]^2}g^{a,b}\|_{L^p(w_{C_N})}$$
for an appropriate weight $w_{C_N}$. The right hand side is bounded by
$$\delta^{2-\frac8p}(\sum_{B_{\delta^2N}\in\F}\|E_{[0,1]^2}g^{a,b}\|_{L^p(w_{B_{\delta^2N}})}^p)^{1/p}.$$
Apply Proposition \ref{fp2} to each of the terms $\|E_{[0,1]^2}g^{a,b}\|_{L^p(w_{B_{\delta^2N}})}$ and then rescale back.

\end{proof}

\bigskip

We are now in position to prove Theorem \ref{ft2}. By iterating Proposition \ref{fp3} $n$ times we get
$$\|E_{[0,1]^2}g\|_{L^p(w_{B_N})}^p\le (C_{p,\epsilon}K^{(1+\epsilon)(p-2)})^n\sum_{R\in\Col_{K^n}}\|E_{R}g\|_{L^p(w_{B_N})}^p+$$$$+C_{p,\epsilon}K^{100p}\sum_{\Delta\in\Col_{N^{1/2}}}
\|E_{\Delta}g\|_{L^p(w_{B_N})}^p\sum_{j=0}^{n-1}(C_{p,\epsilon}K^{(1+\epsilon)(p-2)})^{j}D_{multi}(NK^{-2j},p,\nu_{K})^p.$$
Applying this with $n$ such that $K^n=N^{\frac12}$ we get
$$\|E_{[0,1]^2}g\|_{L^p(w_{B_N})}\le $$$$ N^{\frac1{p}\log_{K}C_{p,\epsilon}}N^{^{(1+\epsilon)(\frac12-\frac1p)}}(\sum_{\Delta\in\Col_{N^{1/2}}}
\|E_{\Delta}g\|_{L^p(w_{B_N})}^p)^{1/p}+
$$$$C_{p,\epsilon}K^{100}\sum_{j=0}^{n-1}(\frac{NK^{-2j}}{N})^{(1+\epsilon)(\frac1p-\frac12)}
D_{multi}(NK^{-2j},p,\nu_{K})(\sum_{\Delta\in\Col_{N^{1/2}}}\|E_{\Delta}g\|_{L^p(w_{B_N})}^p)^{1/p}.
$$

The proof of Theorem \ref{ft2} is now complete, by taking $$\beta_{K,p,\epsilon}=\frac1{p}\log_{K}C_{p,\epsilon}$$
and
$$\Lambda_{K,p,\epsilon}=\frac12C_{p,\epsilon}K^{100}.$$

\bigskip

\section{The proof of Theorem \ref{tfek6}}
In this section we finish the proof of Theorem \ref{tfek6}, by showing that
$$D(N,8)\lesssim_\epsilon N^{\frac3{8}+\epsilon}.$$
For $p\ge 5$ define $\kappa_p$ such that
$$\frac5{2p}=\frac{1-\kappa_p}{2}+\frac{\kappa_p}{p},$$
in other words, $$\kappa_p=\frac{p-5}{p-2}.$$

\begin{proposition}
Let $R_1,\dots,R_{10}$ be $\nu$-transverse squares in $[0,1]^2$ with arbitrary side lengths.
We have that for each radius $R\ge N$, $p\ge 5$ and $g_i:R_i\to \C$
$$\|(\prod_{i=1}^{10}\sum_{\atop{l(\tau)=N^{-1/4}}}|E_{\tau}g_i|^2)^{\frac1{20}}\|_{L^{p}(w_{B_R})}\lesssim_{\nu,p,\epsilon}$$
$$
\lesssim_{\nu,p,\epsilon}N^{\epsilon}\|(\prod_{i=1}^{10}\sum_{\atop{l(\Delta)=N^{-1/2}}}|E_{\Delta}g_i|^2)^{\frac1{20}}\|_{L^{p}(w_{B_R})}^{1-\kappa_p}
(\prod_{i=1}^{10}\sum_{\atop{l(\tau)=N^{-1/4}}}\|E_{\tau}g_i\|_{L^{p}(w_{B_R})}^2)^{\frac{\kappa_p}{20}}.
$$
\end{proposition}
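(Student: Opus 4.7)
The plan is a local-to-global argument: apply Corollary~\ref{fc1} on small balls of radius $N^{1/2}$, exploit local constancy of $G_i:=\sum_{l(\Delta)=N^{-1/2}}|E_\Delta g_i|^2$ on those balls to turn the resulting $L^{2p/5}$-norms into the desired geometric mean, and then aggregate by H\"older and Minkowski.

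Fix a bounded-overlap cover $\mathcal B$ of $B_R$ by balls $B$ of radius $N^{1/2}$ with $\sum_{B\in\mathcal B}w_B\sim w_{B_R}$. On each $B$, apply Corollary~\ref{fc1} with $N$ there replaced by $N^{1/2}$ (so that the caps have side $N^{-1/4}$):
$$\|(\prod_i F_i^{1/20})\|_{L^p(w_B)}\lesssim_{\nu,\epsilon} N^{-\frac{15}{4p}+\epsilon}\Bigl(\prod_i\sum_\tau\|E_\tau g_i\|_{L^{2p/5}(w_B)}^2\Bigr)^{1/20},\qquad F_i:=\sum_\tau|E_\tau g_i|^2.$$
Now interpolate each $L^{2p/5}$-norm via the pointwise H\"older $\|E_\tau g_i\|_{L^{2p/5}(w_B)}\le \|E_\tau g_i\|_{L^2(w_B)}^{1-\kappa_p}\|E_\tau g_i\|_{L^p(w_B)}^{\kappa_p}$, which is exact in view of the defining identity $\frac{5}{2p}=\frac{1-\kappa_p}{2}+\frac{\kappa_p}{p}$. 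Squaring, summing in $\tau$, and applying H\"older once more in the $\tau$-sum gives
$$\sum_\tau\|E_\tau g_i\|_{L^{2p/5}(w_B)}^2\le \Bigl(\sum_\tau\|E_\tau g_i\|_{L^2(w_B)}^2\Bigr)^{1-\kappa_p}\Bigl(\sum_\tau\|E_\tau g_i\|_{L^p(w_B)}^2\Bigr)^{\kappa_p}.$$

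The decisive feature of working at scale $N^{1/2}$ is that wave packets of each $E_\Delta g_i$ have all five dimensions $\ge N^{1/2}$, so $G_i$ is essentially constant on $B$ with a common value $G_i(B)$. Approximate $L^2$-orthogonality of the $E_\Delta g_i$ on $B$ (obtained via Plancherel and Schwartz decay of $\widehat{w_B}$ at the cap spacing $N^{-1/2}$, verified by a routine Dirichlet-kernel estimate) yields
$$\sum_\tau\|E_\tau g_i\|_{L^2(w_B)}^2\lesssim \sum_\Delta\|E_\Delta g_i\|_{L^2(w_B)}^2\sim G_i(B)\,N^{5/2},$$
while local constancy gives $\|G_i^{1/2}\|_{L^q(w_B)}\sim G_i(B)^{1/2}N^{5/(2q)}$ for every $q$. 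Consequently
$$\sum_\tau\|E_\tau g_i\|_{L^2(w_B)}^2\sim N^{5/2-5/p}\|G_i^{1/2}\|_{L^p(w_B)}^2,\qquad \Bigl(\prod_i\|G_i^{1/2}\|_{L^p(w_B)}\Bigr)^{1/10}\sim \|(\prod_i G_i^{1/20})\|_{L^p(w_B)}.$$
Substituting into the two displays above, the $N$-exponents cancel exactly:
$$-\frac{15}{4p}+\frac{5(p-2)}{4p}(1-\kappa_p)=-\frac{15}{4p}+\frac{5(p-2)}{4p}\cdot\frac{3}{p-2}=0,$$
producing the local inequality
$$\|(\prod_i F_i^{1/20})\|_{L^p(w_B)}\lesssim_{\nu,\epsilon} N^\epsilon \|(\prod_i G_i^{1/20})\|_{L^p(w_B)}^{1-\kappa_p}\Bigl(\prod_i\sum_\tau\|E_\tau g_i\|_{L^p(w_B)}^2\Bigr)^{\kappa_p/20}.$$

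To globalize, raise to the $p$-th power and sum over $B\in\mathcal B$; the sum on the left is $\sim \|(\prod_i F_i^{1/20})\|_{L^p(w_{B_R})}^p$. H\"older in $B$ with exponents $\frac{1}{1-\kappa_p}$ and $\frac{1}{\kappa_p}$ splits the right-hand sum into two factors. The first, $\sum_B\|(\prod_i G_i^{1/20})\|_{L^p(w_B)}^p$, is $\lesssim \|(\prod_i G_i^{1/20})\|_{L^p(w_{B_R})}^p$. The second, $\sum_B(\prod_i\sum_\tau\|E_\tau g_i\|_{L^p(w_B)}^2)^{p/20}$, is handled by a ten-fold H\"older (each exponent equal to $10$) followed by Minkowski's inequality $\|\cdot\|_{l^p_B(l^2_\tau)}\le\|\cdot\|_{l^2_\tau(l^p_B)}$ (valid since $p\ge 2$) and the elementary $\sum_B\|E_\tau g_i\|_{L^p(w_B)}^p\lesssim\|E_\tau g_i\|_{L^p(w_{B_R})}^p$, producing $(\prod_i\sum_\tau\|E_\tau g_i\|_{L^p(w_{B_R})}^2)^{p/20}$. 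Taking $p$-th roots yields the claim. The main subtlety is the exact cancellation of $N$-exponents in the local step, which pins down $N^{1/2}$ as the unique scale where Corollary~\ref{fc1} is applicable at the $\tau$-scale and simultaneously $G_i$ is essentially constant.
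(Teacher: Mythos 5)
Your proposal follows the paper's proof essentially verbatim: apply Corollary~\ref{fc1} at scale $N^{1/2}$ on balls $B$ of radius $N^{1/2}$, interpolate the $L^{2p/5}$ norms between $L^2$ and $L^p$ via the defining identity for $\kappa_p$, use $L^2$ almost orthogonality to pass from the $N^{-1/4}$-caps to the $N^{-1/2}$-caps, convert the $L^2$ quantity back to an $L^p$ one by local constancy (the factor $|B|^{1/2-1/p}=N^{5/2-5/p}$), and sum over a finitely overlapping cover of $B_R$. The exponent cancellation you verify is exactly the one implicit in the paper, and your more detailed H\"older--Minkowski globalization fills in a step the paper only asserts.
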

\begin{proof}
Let  $B$ be an arbitrary ball of radius $N^{1/2}$. We start by recalling that \eqref{we3} on $B$ gives
\begin{equation}
\label{fe20}
\|(\prod_{i=1}^{10}\sum_{\atop{l(\tau)=N^{-1/4}}} |E_{\tau}g_i|^2)^{1/20}\|_{L^{p}(w_{B})}\lesssim_{\nu,\epsilon,p} N^{-\frac{15}{4p}+\epsilon}(\prod_{i=1}^{10}\sum_{\atop{l(\tau)=N^{-1/4}}}\|E_{\tau}g_i\|_{L^{2p/5}(w_{B})}^2)^{\frac1{20}}.
\end{equation}
Write using H\"older's inequality
\begin{equation}
\label{we7}
(\sum_{\atop{l(\tau)=N^{-1/4}}}\|E_{\tau}g_i\|_{L^{p/2}(w_{B})}^2)^{\frac1{2}}\le (\sum_{\atop{l(\tau)=N^{-1/4}}}\|E_{\tau}g_i\|_{L^{2}(w_{B})}^2)^{\frac{1-\kappa_p}{2}}(\sum_{\atop{l(\tau)=N^{-1/4}}}\|E_{\tau}g_i\|_{L^{p}(w_{B})}^2)^{\frac{\kappa_p}{2}}.
\end{equation}
The next key element in our argument is the almost orthogonality specific to $L^2$, which will allow us to pass from scale $N^{-1/4}$ to scale $N^{-1/2}$. Indeed, since $(E_\Delta g_i)w_{B}$ are almost orthogonal for $l(\Delta)=N^{-1/2}$, we have
$$(\sum_{\atop{l(\tau)=N^{-1/4}}}\|E_{\tau}g_i\|_{L^{2}(w_{B})}^2)^{1/2}\lesssim (\sum_{\atop{l(\Delta)=N^{-1/2}}}\|E_{\Delta}g_i\|_{L^{2}(w_{B})}^2)^{1/2}.$$
We can now rely on the fact that $|E_{\Delta}g_i|$ is essentially constant on balls $B'$ of radius $N^{1/2}$ to argue that
$$(\sum_{\atop{l(\Delta)=N^{-1/2}}}\|E_{\Delta}g_i\|_{L^{2}(B')}^2)^{\frac1{2}}\sim |B'|^{1/2}(\sum_{\atop{l(\Delta)=N^{-1/2}}}|E_{\Delta}g_i(x)|^2)^{\frac1{2}}\text{ for }x\in{B'}$$
and thus
\begin{equation}
\label{we8}
(\prod_{i=1}^{10}\sum_{\atop{l(\Delta)=N^{-1/2}}}\|E_{\Delta}g_i\|_{L^{2}(w_{B})}^2)^{\frac1{20}}\lesssim |B|^{\frac12-\frac1p}\|(\prod_{i=1}^{10}\sum_{\atop{l(\Delta)=N^{-1/2}}}|E_{\Delta}g_i|^2)^{\frac1{20}}\|_{L^{p}(w_{B})}.
\end{equation}
Combining \eqref{fe20}, \eqref{we7} and \eqref{we8} we get
$$
\|(\prod_{i=1}^{10}\sum_{\atop{l(\tau')=N^{-1/4}}}|E_{\tau}g_i|^2)^{\frac1{20}}\|_{L^{p}(w_{B})}\lesssim_{\nu,p,\epsilon}$$$$\lesssim_{\nu,p,\epsilon}N^{\epsilon} \|(\prod_{i=1}^{10}\sum_{\atop{l(\Delta)=N^{-1/2}}}|E_{\Delta}g_i|^2)^{\frac1{20}}\|_{L^{p}(w_{B})}^{1-\kappa_p}
(\prod_{i=1}^{10}\sum_{\atop{l(\tau)=N^{-1/4}}}\|E_{\tau}g_i\|_{L^{p}(w_{B})}^2)^{\frac{\kappa_p}{20}}.
$$
Summing this up over a finitely overlapping family of balls $B\subset B_R$ of radius $N^{1/2}$, we get the desired inequality.
\end{proof}
\medskip

We will iterate the result of the above proposition in the following form, a consequence of the Cauchy--Schwartz inequality
$$\|(\prod_{i=1}^{10}\sum_{\atop{l(\tau)=N^{-1/4}}}|E_{\tau}g_i|^2)^{\frac1{20}}\|_{L^{p}(w_{B_R})}\le$$
\begin{equation}
\label{fe21}
\le C_{p,\nu,\epsilon}N^{\frac{\kappa_p}{2}(\frac12-\frac1p)+\epsilon}\|(\prod_{i=1}^{10}\sum_{\atop{l(\Delta)=N^{-1/2}}}
|E_{\Delta}g_i|^2)^{\frac1{20}}\|_{L^{p}(w_{B_R})}^{1-\kappa_p}(\prod_{i=1}^{10}\sum_{\atop{l(\tau)=N^{-1/4}}}
\|E_{\tau}g_i\|_{L^{p}(w_{B_R})}^p)^{\frac{\kappa_p}{10p}}.
\end{equation}

\bigskip

We will also need the following immediate consequence of the Cauchy--Schwartz inequality. While the exponent $2^{-s}$ in $N^{2^{-s}}$ can be improved if transversality is imposed, the following trivial estimate will suffice for our purposes.
\begin{lemma}
\label{wlem0081}Consider ten  rectangles $R_1,\ldots,R_{10}\subset [0,1]^2$ with arbitrary side lengths. Assume $g_i$ is supported on $R_i$.
Then for $1\le p\le\infty$ and $s\ge 2$
$$\|(\prod_{i=1}^{10}|E_{R_i}g_i|)^{1/10}\|_{L^{p}({w_{B_N}})}\le N^{2^{-s}}\|(\prod_{i=1}^{10}\sum_{\atop{l(\tau_s)=N^{-2^{-s}}}}|E_{\tau_s}g_i|^2)^{\frac1{20}}\|_{L^{p}(w_{B_N})}.$$
\end{lemma}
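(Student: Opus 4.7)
The plan is to apply the Cauchy--Schwarz inequality separately to each of the ten factors $E_{R_i}g_i$, after decomposing $R_i$ into the smaller squares $\tau_s$ of side length $N^{-2^{-s}}$. Since the support of $g_i$ lies inside $R_i \subset [0,1]^2$, one may write
$$E_{R_i}g_i(x) = \sum_{\tau_s \subset R_i} E_{\tau_s}g_i(x),$$
where the sum runs over the dyadic squares $\tau_s$ of side length $N^{-2^{-s}}$ covering $R_i$.

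The first step is to count: since $R_i \subset [0,1]^2$, the number of such $\tau_s$ inside $R_i$ is at most $N^{2\cdot 2^{-s}} = N^{2^{1-s}}$. By Cauchy--Schwarz pointwise,
$$|E_{R_i}g_i(x)| \le N^{2^{-s}}\Bigl(\sum_{l(\tau_s)=N^{-2^{-s}}} |E_{\tau_s}g_i(x)|^2\Bigr)^{1/2}.$$
Taking the product over $i=1,\ldots,10$ and then raising to the power $1/10$, the factors $N^{2^{-s}}$ multiply to $N^{10 \cdot 2^{-s}}$, and the tenth root restores the exponent to $N^{2^{-s}}$:
$$\Bigl(\prod_{i=1}^{10}|E_{R_i}g_i(x)|\Bigr)^{1/10} \le N^{2^{-s}} \Bigl(\prod_{i=1}^{10}\sum_{l(\tau_s)=N^{-2^{-s}}} |E_{\tau_s}g_i(x)|^2\Bigr)^{1/20}.$$

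The final step is to take the $L^{p}(w_{B_N})$ norm of both sides, which is monotone in the pointwise sense and hence preserves the inequality. There is no serious obstacle here: the argument is a one-line application of Cauchy--Schwarz combined with the trivial counting bound on the number of $\tau_s$'s, and the assumption $s\ge 2$ is used only to ensure that $N^{-2^{-s}}$ is a legitimate smaller scale than $N^{-1/2}$ that will actually be iterated against in the subsequent bootstrapping. This yields exactly the claimed bound.
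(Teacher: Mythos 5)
Your argument is correct and is precisely what the paper intends: the paper states the lemma without proof as an ``immediate consequence of the Cauchy--Schwartz inequality,'' and your pointwise Cauchy--Schwarz on the decomposition $E_{R_i}g_i=\sum_{\tau_s}E_{\tau_s}g_i$ together with the count of at most $N^{2^{1-s}}$ squares $\tau_s$ in $[0,1]^2$ is exactly that argument. (One incidental slip in your closing remark: for $s\ge 2$ the scale $N^{-2^{-s}}\ge N^{-1/4}$ is \emph{coarser} than $N^{-1/2}$, which is why the subsequent iteration descends from it toward $N^{-1/2}$; this does not affect the proof.)
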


\bigskip

Using  parabolic rescaling as in the proof of Theorem \ref{fp3}, we get that for each square $R\subset[0,1]^2$ with side length $N^{-\rho}$,  $\rho\le \frac12$
\begin{equation}
\label{fe22}
\|E_Rg\|_{L^p(w_{B_N})}\le D(N^{1-2\rho},p)(\sum_{\Delta\subset R\atop{l(\Delta)=N^{-1/2}}}\|E_\Delta g\|_{L^p(w_{B_N})}^p)^{1/p}.
\end{equation}
\bigskip

Fix $\epsilon>0$, $K\ge 2$, to be chosen later. Recall the definition of $\nu_{K}$ from Theorem \ref{ft2}. For simplicity, we will denote the constant $C_{p,\nu_{K},\epsilon}$ from \eqref{fe21} with $C_{p,K,\epsilon}$.

Let $R_1,\ldots, R_{10}\subset [0,1]^2$ be $\nu_{K}$-transverse  rectangles with arbitrary side lengths and assume $g_i$ is supported on $R_i$. Start with Lemma \ref{wlem0081}, continue with iterating \eqref{fe21} $s-1$ times, and invoke \eqref{fe22} at each step to write
$$\|(\prod_{i=1}^{10}|E_{R_i}g_i|)^{1/2}\|_{L^{p}({B_N})}\le N^{2^{-s}}(C_{p,K,\epsilon}N^\epsilon)^{s-1}(\prod_{i=1}^{10}\sum_{\atop{l(\Delta)=N^{-1/2}}}
\|E_{\Delta}g_i\|_{L^{p}(w_{B_N})}^p)^{\frac{1}{10p}}\times$$
$$\times N^{\frac{\kappa_p}{2}(\frac12-\frac1p)(1-\kappa_p)^{s-2}}\cdot\ldots\cdot N^{\frac{\kappa_p}{2^{s-2}}(\frac12-\frac1p)(1-\kappa_p)}N^{\frac{\kappa_p}{2^{s-1}}(\frac12-\frac1p)}\|(\prod_{i=1}^{10}\sum_{\atop{l(\tau)=N^{-1/2}}}|E_{\Delta}g_i|^2)^{\frac1{20}}\|_{L^{p}(w_{B_N})}^{(1-\kappa_p)^{s-1}}\times$$
\begin{equation}
\label{fe34}
\times D(N^{1-2^{-s+1}},p)^{\kappa_p}D(N^{1-2^{-s+2}},p)^{\kappa_p(1-\kappa_p)}\cdot\ldots\cdot D(N^{1/2},p)^{\kappa_p(1-\kappa_p)^{s-2}}.
\end{equation}

Note that the inequality
$$\|(\sum_{\atop{l(\Delta)=N^{-1/2}}}|E_{\Delta}g_i|^2)^{\frac1{2}}\|_{L^{p}(w_{B_N})}\le N^{\frac12-\frac1p}(\sum_{\atop{l(\Delta)=N^{-1/2}}}\|E_{\Delta}g_i\|_{L^{p}(w_{B_N})}^p)^{1/p}$$
is an immediate  consequence of Minkowski's and H\"older's inequalities. Using this, \eqref{fe34} has the following consequence
$$D_{multi}(N,p,\nu_{K})\le (C_{p,K,\epsilon}N^\epsilon)^{s-1} N^{2^{-s}}N^{\kappa_p 2^{-s}(1-\frac2p)\frac{1-(2(1-\kappa_p))^{s-1}}{2\kappa_p-1}}\times$$
\begin{equation}
\label{fe23}
\times D(N^{1-2^{-s+1}},p)^{\kappa_p}D(N^{1-2^{-s+2}},p)^{\kappa_p(1-\kappa_p)}\cdot\ldots\cdot D(N^{1/2},p)^{\kappa_p(1-\kappa_p)^{s-2}}    N^{O_p((1-\kappa_p)^s)}.
\end{equation}
\bigskip

Let $\gamma_p$ be the unique positive number such that
$$\lim_{N\to\infty}\frac{D(N,p)}{N^{\gamma_p+\delta}}=0,\;\text{for each }\delta>0$$
and
\begin{equation}
\label{fe27}
\limsup_{N\to\infty}\frac{D(N,p)}{N^{\gamma_p-\delta}}=\infty,\;\text{for each }\delta>0.
\end{equation}
The existence of such $\gamma_p$ is guaranteed by \eqref{fe30} and \eqref{fe3008}.
Recall that our goal is to prove that $\gamma_8=\frac3{8}.$
By using the fact that $D(N,p)\lesssim_\delta N^{\gamma_p+\delta}$ in \eqref{fe23}, it follows that for each $\delta>0$ and $s\ge 2$
\begin{equation}
\label{fe32}
\limsup_{N\to\infty}\frac{D_{multi}(N,p,\nu_{K})}{N^{\gamma_{p,\delta,s,\epsilon}}}<\infty
\end{equation}
 where
$$\gamma_{p,\delta,s,\epsilon}=\epsilon(s-1)+2^{-s}+\kappa_p(\gamma_p+\delta)(\frac{1-(1-\kappa_p)^{s-1}}{\kappa_p}-2^{-s+1}\frac{1-(2(1-\kappa_p))^{s-1}}{2\kappa_p-1})+$$$$+\kappa_p 2^{-s}(1-\frac2p)\frac{1-(2(1-\kappa_p))^{s-1}}{2\kappa_p-1}+O_p((1-\kappa_p)^s).$$

We will show now that if $p>8$ then
$$\gamma_p\le \frac{2\kappa_p-1}{2\kappa_p}+\frac12-\frac1p=\frac{p-8}{2p-10}+\frac12-\frac1p.$$
If we manage to do this, it will suffice to let $p\to 8$ to get $\gamma_8\le \frac38$, hence actually $\gamma_8=\frac38$, as desired.

We first note that if $p>8$
\begin{equation}
\label{fe24}
2(1-\kappa_p)=\frac{6}{p-2}<1.
\end{equation}
Assume for contradiction that for some $p>8$ we have
\begin{equation}
\label{fe26}
\gamma_p>\frac{2\kappa_p-1}{2\kappa_p}+\frac12-\frac1p.
\end{equation}
A simple computation using \eqref{fe24} and \eqref{fe26} shows that for $s$ large enough, and $\epsilon,\delta$ small enough we have
\begin{equation}
\label{fe33}
\gamma_{p,\delta,s,\epsilon}<\gamma_p
\end{equation}
and
$$(1+\epsilon)(\frac12-\frac1p)<1-\frac5p.$$
Fix such  $\epsilon,\delta, s$ and choose now $K$ so large that
\begin{equation}
\label{fe35}
(1+\epsilon)(\frac12-\frac1p)+\beta(K,p,\epsilon)<1-\frac5p,
\end{equation}
where $\beta(K,p,\epsilon)$ is from Theorem \ref{ft2}.

Now, \eqref{fe31} combined with \eqref{fe35} and \eqref{fe30} shows that for $N\ge K$
\begin{equation}
\label{fek16}
D(N,p)\lesssim_{K,p,\epsilon}\log_2 N\max_{1\le M\le N}(\frac{M}{N})^{(1+\epsilon)(\frac1p-\frac12)}D_{multi}(M,p,\nu_{K}).
\end{equation}
We have two possibilities.

First, if $\gamma_{p,\delta,s,\epsilon}<(1+\epsilon)(\frac12-\frac1p)$ then using \eqref{fek16} and \eqref{fe32} we can write
$$D(N,p)\lesssim_{K,p,\epsilon}\log_2 N\max_{1\le M\le N}(\frac{M}{N})^{(1+\epsilon)(\frac1p-\frac12)}M^{(1+\epsilon)(\frac12-\frac1p)}=\log_2 NN^{(1+\epsilon)(\frac12-\frac1p)}.$$
This contradicts the combination of \eqref{fe30} and \eqref{fe35}.

Second, if $\gamma_{p,\delta,s,\epsilon}\ge (1+\epsilon)(\frac12-\frac1p)$ then using \eqref{fek16} again we can write
$$D(N,p)\lesssim_{K,p,\epsilon}\log N\max_{1\le M\le N}(\frac{M}{N})^{(1+\epsilon)(\frac1p-\frac12)}M^{\gamma_{p,\delta,s,\epsilon}}$$$$\lesssim_{K,p,\epsilon}\log NN^{\gamma_{p,\delta,s,\epsilon}},$$
which contradicts \eqref{fe33} and the definition of $\gamma_p$.
In conclusion, inequality \eqref{fe26} can not hold, and the proof of Theorem \ref{tfek6} is complete.

\end{document}